\newtheorem{theorem}{Theorem}[section]
\newtheorem{remark}[theorem]{Remark}
\newtheorem{proposition}[theorem]{Proposition}
\newtheorem{hypothesis}[theorem]{Hypothesis}
\newtheorem{lemma}[theorem]{Lemma}
\newtheorem{corollary}[theorem]{Corollary}
\newcommand{\sff}{{\mathsf f}}
\newcommand{\sfg}{{\mathsf g}}
\newcommand{\sfS}{{\mathsf S}}
\newcommand{\sfT}{{\mathsf T}}
\newcommand{\sfx}{{\mathsf x}}
\newcommand{\sfR}{{\mathsf R}}
\newcommand{\sfB}{{\mathsf B}}
\newcommand{\Sp}{S_p}
\newcommand{\SpI}{S_p^I}
\newcommand{\mQ}{\textcolor{black}{\mathcal{Q}_z}}
\newcommand{\eps}{\varepsilon}
\newcommand{\Rm}{\mathbb R}
\newcommand{\dint}{\displaystyle\int}
\newcommand{\tps}{\mathrm{t}}
\newcommand{\sL}{{\mathsf L}}
\title{Long distance propagation of light in random media with partially coherent sources}
\author{Guillaume Bal \thanks{Departments of Statistics and Mathematics and Committee on Computational and Applied Mathematics, University of Chicago, Chicago, IL 60637; guillaumebal@uchicago.edu} \and Anjali Nair \thanks{Department of Statistics and Committee on Computational and Applied Mathematics, University of Chicago, Chicago, IL 60637; anjalinair@uchicago.edu}}
\date{\today}
\begin{document}

\maketitle

\begin{abstract}
 Optical beam propagation in random media is characterized by familiar speckle patterns generated by intricate interference effects. Such patterns may be modified and possibly attenuated for partially coherent incident beam profiles. In the weak-coupling regime of the It\^o-Schr\"odinger paraxial model of wave propagation, we show how the spatio-temporal statistics of the partially coherent beams interact with the statistics of the random medium to enhance or suppress scintillation effects.
\end{abstract}

\noindent{\bf Keywords:} Wave propagation in random media; It\^o-Schr\"odinger regime; Partially coherent sources; Time averaging; Scintillation

\section{Introduction}
\label{sec:intro}

It is widely acknowledged in physical literature that incident beams with randomly varying spatial and/or temporal fluctuations, i.e, partially coherent sources~\cite{mandel1995optical} are potentially more robust against turbulent fluctuations  when compared to their fully coherent counterparts~\cite{gbur2014partially}. This fact has also been supported by several physical experiments~\cite{dogariu2003propagation, nelson2016scintillation}, numerical simulations~\cite{wu1991coherence, borah2010spatially, nair2023scintillation} and formal perturbative expansions~\cite{gbur2002spreading, korotkova2004model}. 

A primary metric used to quantify statistical stability is the scintillation index, which is the ratio of the variance of intensity and the average intensity squared. Under long distances of propagation through random media, it is a well accepted conjecture that the wave field becomes circularly symmetric Gaussian distributed and the scintillation index converges to one~\cite{goodman1976some, sheng1990scattering}. This provides a model for speckle patterns observed in experiments~\cite{goodman1976some, andrews2001laser, carminati2021principles}. This conjecture is not true in general, for instance the spot dancing regime leads to a Rice-Nakagami distribution for deterministic Gaussian sources~\cite{furutsu1973spot, dawson1984random}. 

The Gaussian conjecture was recently demonstrated in the weak-coupling (scintillation) regime of a paraxial wave propagation model in \cite{bal2024complex} with a complete statistical description of the wave field for deterministic incident beams. The analysis of second and fourth statistical moments leading to a full characterization of the scintillation index was carried out in  \cite{garnier2014scintillation, garnier2016fourth}. In particular, it was shown  in~\cite{bal2024complex} that under the diffusive scaling of the scintillation regime, the wave field indeed becomes circularly symmetric Gaussian. In such a diffusion regime, the average intensity follows a diffusion equation with an anomalous rate of $z^{3/2}$ with $z$ distance of propagation along the beam direction.

However the presence of random sources leads to more complex mixing regimes with scintillation index different from unity~\textcolor{black}{\cite{andrews2005laser,berman2009reduction, berman2009influence, efimov2014simultaneous, garnier2022scintillation}}. It has been observed that the superior performance of such sources towards scintillation reduction is in part due to temporal averaging at the detector. If the averaging time of the detector is much smaller than the decorrelation time of the source, scintillation can potentially be higher for partially coherent sources due to the additional randomness of the source~\cite{berman2009influence, gbur2014partially, garnier2022scintillation}. The goal of this paper is to provide a  description of the limiting distribution of the wave field due to partially coherent sources under the kinetic and diffusive scalings of the scintillation regime. We will also use this to characterize the long distance statistics of time averaged intensity in terms of the coherence length scales of the source and time averaging at the detector.

While wave propagation is a classical phenomenon, a complete mathematical description of waves in random media is still in lack and various scaling regimes have been identified to provide a macroscopic description of the underlying wave field \cite{BKR-KRM-10}. In particular, the paraxial approximation is commonly used to model long distance propagation of high frequency waves, where the wave field retains its beam-like structure along a privileged axis with negligible backscattering~\cite{andrews2001laser}.

A starting point for modelling time harmonic scalar wave fields is the scalar Helmholtz equation
\[
   \Big( \partial^2_z + \Delta_x + k_0^2n^2(z,x)\Big) p(z,x) = \textcolor{black}{+}u_0(x) \textcolor{black}{\delta'_0(z)}\,,
\]
where $k_0$ is the carrier wave number, $n$ is the refractive index of the medium and $u_0$ denotes the source. Throughout the paper $z>0$ models distance of propagation along the beam axis while $x\in\Rm^d$ describes transverse spatial variables. While $d=2$ is the physical dimension, our results apply for any $d\geq1$.

Let $n^2(z,x)=1+\nu(z,x)$ with zero mean random medium fluctuations $\nu(z,x)$ satisfying $\mathbb{E}[\nu(0,0)^2]=\sigma^2$. Assuming that the fluctuations in the refractive index are small, we consider a high frequency, long distance scaling called the white noise paraxial regime:
\[
  z \to \frac{z}{\vartheta},\qquad x \to x,\qquad k_0\to \frac{k_0}\vartheta,\qquad \sigma^2\to \vartheta^3 \sigma^2, \qquad u_0\to u_0,
\]
where $\vartheta\ll1$ is a non dimensionalised parameter. Under this scaling, the paraxial envelope of the wavefield $u(z,x)=p\big(\frac{z}{\vartheta},x\big)e^{-i\frac{k_0z}{\vartheta^2}}$ follows the paraxial wave equation
\[
  \Big(2ik_0 \partial_z + \Delta_x + \frac {k_0^2}{\vartheta^{\frac12}} \nu(\frac z\vartheta,x) \Big)u = 0,\qquad u(0,x)=u_0(x)
\]
where formally, the term generating backscattering is neglected assuming $|\vartheta^2\partial^2_{z}u|\ll 1$. When the correlations in the medium are sufficiently short range, a formal application of the central limit theorem gives that
\[
\frac {1}{\vartheta^{\frac12}} \nu \big(\frac {z}\vartheta,x \big)\mathrm{d}z\approx B(\mathrm{d}z,x)\,,
\]
with 
\[
\mathbb{E}[B(z,x)B(z',x')]=\min(z,z')R(x-x')\,,
\]
where the spatial covariance $R(x)=2\int\limits_{0}^\infty\mathbb{E}[\nu(0,0)\nu(z,x)]\mathrm{d}z$. This gives in the limit $\vartheta\to 0$ the It\^o-Schr\"odinger equation
\begin{equation*}
\mathrm{d}u(z,x)=\frac{i}{2k_0}\Delta_xu(z,x)\mathrm{d}z-\frac{k_0^2}{8}R(0)u(z,x)\mathrm{d}z+\frac{ik_0}{2}u(z,x)\mathrm{d}B(z,x),\quad u(0,x)=u_0(x)\,.
\end{equation*}
A rigorous derivation of the white noise paraxial limit, which is our starting point in this paper, from the Helmholtz model can be found in~\cite{bailly1996parabolic, garnier2009coupled}. 

The main advantage of the It\^o-Schr\"odinger equation is the availability of closed form partial differential equations for moments of all order of the wave field. However, PDEs for moments higher than two cannot be solved analytically in general, which necessitates identifying additional scaling regimes amenable to asymptotic analysis. One such regime is the scintillation regime, which is a weak fluctuation regime where the effects of turbulence are at the scale $\eps_{\rm eff}\ll 1$ while the total propagation length is sufficiently large so that $z_{\rm eff}\eps_{\rm eff}\gtrsim 1$. As in~\cite{bal2024complex}, under the scintillation regime we identify two scaling regimes, the first where $z_{\rm eff}\eps_{\rm eff}\approx 1$ referred to as the kinetic regime and a second $z_{\rm eff}\eps_{\rm eff}\gg 1$ which we call the diffusive regime. 

We may assume in practice that the time it takes the signal to reach a receiver at a distance $z=L$ is much smaller than the time scale of fluctuations of both the medium and the source. This allows us to introduce a slowly varying time parameter into the wave field as $u(z,x;t)$ describing a randomly varying source $u_0(x;t)$. While the medium statistics could be a function of time as well, i.e, $B=B(z,x;t)$, we assume $t$ above to be frozen during the beam propagation in the sense that the decorrelation time of the medium $(\tau_m)$ is much slower than the decorrelation time of the source $(\tau_s)$. We also assume the latter to be comparable to the averaging time at the detector $(\sfT)$, i.e, $\tau_s\le \sfT\ll \tau_m$. The framework presented here can be generalized to include an explicit temporal dependence on the medium as well. The assumption $\sfT>0$ models the fact that light detectors are typically not sufficiently sensitive to capture the rapid temporal oscillations of laser light. We will see that this inability for detectors to measure such rapid oscillations is one of the two ingredients leading to reduced scintillation. The second ingredient is an assumption that the spatial correlation of the random source is appropriately small compared to the width of the incident beam.

The paper is organized as follows. After introducing our model of partially coherent sources in section \ref{sec:description}, we state the main results in section~\ref{sec:main_results}. Some interpretations and illustrations of these results are proposed in section \ref{sec:illustration}. The proofs of the results mainly follow from identifying the limiting moments of all order of the wave field. Conditioned on the realization of the random incident beam, the analysis of such moments was carried out in \cite{bal2024complex}. The main objective of this paper is to show how such moments are modified in the presence of random incident beams.
In Section~\ref{sec:second_moments}, we derive the limiting second moments in the kinetic and diffusive regimes. The higher moments are characterized in Section~\ref{sec:higher_moments}. We will use this result to obtain the limiting moments of the time averaged intensity in Section~\ref{sec:limiting_intensity}. Finally in Section~\ref{sec:conclusion} we collect some concluding remarks and possible extensions. An appendix presents the proof of the tightness and stochastic continuity results given in Theorem \ref{thm:tightness} below.
\section{Description of the partially coherent wavefield.}
\label{sec:description}
The main goal of this paper is to examine the long distance statistics of wave fields propagating in randomly varying media with partially coherent sources. Our starting point is the It\^{o}-Schr\"{o}dinger equation for the complex valued wave field $u(z,x;t)$ given on a domain $(z,x)\in[0,\infty)\times \mathbb{R}^d, t\in[0,\infty)$ by~\cite{garnier2022scintillation}
\begin{equation}\label{eqn:Ito_Schro_1}
\mathrm{d}u(z,x;t)=\frac{i}{2k_0}\Delta_xu(z,x;t)\mathrm{d}z-\frac{k_0^2}{8}R(0)u(z,x;t)\mathrm{d}z+\frac{ik_0}{2}u(z,x;t)\mathrm{d}B(z,x;t) ,
\end{equation}
where $t$ reflects the time scale of the random fluctuations in the incident beam. We assume that this time scale is much larger than the time it takes an incident beam to reach a receiver at distance $z>0$. Thus, $t$ can be assumed to be a frozen parameter in~\eqref{eqn:Ito_Schro_1}. The randomness in the medium is encoded in $B(z,x;t)$, a Gaussian process over $[0,\infty)\times\mathbb{R}^d\times [0,\infty)$ with statistics
\begin{equation}
    \mathbb{E}[B(z,x;t)]B(z',x';t')]=(z\wedge z')g(t-t')R(x-x')\,.
\end{equation}
As noted in the introduction, we assume that the decorrelation time of the random medium ($\tau_m$) is much slower than the decorrelation time of the source $\tau_s$ and an intrinsic time scale of the detector $\sfT$,  i.e. $\tau_m\gg \tau_s$ and $\tau_m\gg \sfT$. As a consequence, $g(t-t')\approx g(0)\equiv1$ in the rest of the paper. Fluctuations in time of the medium may be included as in \cite{garnier2022scintillation} when detectors average signals over times $\sfT\approx \tau_m$ but we will not do so here for simplicity. Further assumptions on the spatial covariance $R$ are listed below in Hypothesis~\ref{hyp:medium}. 

The source $u(0,x;t)=u_0(x;t)$ is assumed to be a mean zero complex Gaussian process over $\mathbb{R}^d\times [0,\infty)$, independent of $B$. Its properties are detailed in the next paragraph.
 
As in~\cite{bal2024complex}, we introduce the scaling 
\begin{equation*}
    z\to\frac{\eta z}{\eps},\quad R^\eps(x)=\frac{\eps}{\eta^3}R(x)\,,
\end{equation*}
so that \eqref{eqn:Ito_Schro_1} becomes
\begin{equation}\label{eqn:Ito_Schr}
\mathrm{d}u^\eps(z,x;t)=\frac{i\eta}{2k_0\eps}\Delta_xu^\eps(z,x;t)\mathrm{d}z-\frac{k_0^2}{8\eta^2}R(0)u^\eps(z,x;t)\mathrm{d}z+\frac{ik_0}{2\eta}u^\eps(z,x;t)\mathrm{d}B(z,x),
\end{equation}
with incident beam profile $u^\eps(0,x;t)=u^\eps_0(x;t)$. 

Here, $\eps\ll1$ and $\eta=\eta(\eps)$, for which we will consider two cases: one with $\eta=1$ referred to as the kinetic regime in which turbulence has an $O(1)$ effect on the wavefield since fluctuations of order $O(\eps)$ are integrated over distances of order $O(\eps^{-1})$. In the second regime, $\eta=(\ln|\ln\eps|)^{-1}$, where the cumulative effects of turbulence is $O(\eta^{-2})\gg O(1)$ since fluctuations of order $O(\eps\eta^{-3})$ are integrated over distances of order $O(\eta\eps^{-1})$. We call this latter scaling as the diffusive regime of the white noise paraxial equation. The large separation of scales in the choice of $\eta^{-1}=\ln\ln\eps^{-1}$ is made for technical reasons \cite{bal2024complex}.

\paragraph{Scaling of the source: }
We assume that the random incident beam follows a mean zero complex Gaussian process with the following statistics:
\begin{equation}\label{eqn:u0_cov}
    \mathbb{E}[u_0^\eps(x;t){u_0^\eps}^\ast(y;t')]=F\Big(\frac{t-t'}{\tau_s}\Big)J^\eps(x,y)\,,
\end{equation}
with $F(-t)=F(t)\in\sL^1(\mathbb{R})\cap\sL^\infty(\mathbb{R})$ and \textcolor{black}{for simplicity, we assume $0\le |F(t)|\le F(0)=1$} and $0<\tau_s\leq1$ models the time scale of the random source. To simplify notation, we will simply replace the above term by \textcolor{black}{$F(t-t')$}. We will observe that only for $\tau_s$ sufficiently small compared to the detector time scale $\sfT$ do we expect a small scintillation index.

We define the spatial two point correlation function as
\begin{equation*}
  \Gamma^\eps(r,\sigma)=  J^\eps \big(r+ \frac12 \sigma,r- \frac12 \sigma\big)\,,
\end{equation*}
and assume a scaling of the form
\begin{eqnarray}\label{eqn:Gamma_eps}
    \Gamma^\eps(r,\sigma) =\Gamma\Big(\eps^{\beta}r,\frac{\eps^{\beta}}{\theta}\sigma\Big).
\end{eqnarray}
\textcolor{black}{We assume that $\Gamma$ depends only on the barrycentre $r=\frac{x+y}{2}$ and the magnitude of separation between the points $\sigma=x-y$ so that the covariance $J(x,y)=\Gamma\big(\frac{x+y}{2},x-y\big)=\Gamma\big(\frac{x+y}{2},|x-y|\big)$.} 
Here, $\Gamma(r,-\sigma)=\Gamma(r,\sigma)\in\mathcal{S}(\mathbb{R}^{2d})$ and $0<\theta\leq 1$ a parameter (independent of $\eps$) that controls the spatial correlation length of the source with respect to the overall width of the beam. As in \cite{bal2024complex}, the statistics of the wavefield satisfy an asymptotic limit in the kinetic and diffusive regimes as $\eps\to0$ provided that the incident beam is sufficiently broad. This is modeled by a choice of parameter $\beta\ge 1$ with the incident field a broad beam given as a (smooth) function of $\eps^\beta x$ and hence a diameter of order $\eps^{-\beta}$. The case $\beta=1$ generates a richer limiting diffusive model. The case $\beta>1$ models very broad incident beams as a good approximation to plane waves. 

The parameter $\theta$ models the spatial fluctuations of the incident beam. The case $\theta=1$ corresponds to a spatial correlation of the incident beam that is as broad as the incident beam. In such a setting, we do not expect much averaging effects coming from the randomness of the partially coherent source. The setting $\theta\ll 1$ indicates a coherence length much smaller than the overall scale of the beam in which we will show that averaging effects are more pronounced. Only in the limit where $\theta\ll1$ {\em and} $\tau_s\ll \sfT$ do we expect asymptotically vanishing scintillation effects. 

\medskip

We denote by $\mathcal{M}_B(\mathbb{R}^{nd})$, the Banach space of finite signed measures on $\mathbb{R}^{nd}$ for $n$ an integer, equipped with the total variation (TV) norm $\|\cdot\|$; for measures with a density with respect to the Lebesgue measure, then that norm is the $\sL^1(\mathbb{R}^{nd})$ norm of that density. Throughout the paper, $\|\cdot\|$ is used for the TV norm of bounded Radon measures or for the operator norm of operators acting on such measures.

\medskip

We denote by $\hat{\Gamma}$, the Fourier transform of $\Gamma$ given by
\begin{equation*}
    \hat{\Gamma}(\zeta,\xi)=\int\limits_{\mathbb{R}^{2d}}\Gamma(r,\sigma)e^{-i(r\cdot\zeta-\sigma\cdot\xi)}\mathrm{d}r\mathrm{d}\sigma\,
\end{equation*}
with $\hat{\Gamma}\in\mathcal{S}(\mathbb{R}^{2d})$. This means that the Fourier transform of~\eqref{eqn:Gamma_eps}
\begin{eqnarray}
    \hat{\Gamma}^\eps(\zeta,\xi)=\int\limits_{\mathbb{R}^{2d}}\Gamma^\eps(r,\sigma)e^{-i(\zeta\cdot r-\xi\cdot\sigma)}\mathrm{d}r\mathrm{d}\sigma=\theta^d\eps^{-2\beta d}\hat{\Gamma}(\eps^{-\beta}\zeta,\theta\eps^{-\beta}\xi)
\end{eqnarray}
is the density of a measure $\hat\Gamma^\eps d\zeta d\xi$ (still called $\hat\Gamma^\eps$) in $\mathcal{M}_B(\mathbb{R}^{2d})$  with total variation $\|\hat{\Gamma}^\eps\|=\|\hat{\Gamma}\|\le C$. This also means that $\hat{J}^\eps$, the Fourier transform of $J^\eps$, is integrable with the same bound. 

As in \cite{bal2024complex}, the analysis of the wavefields in the kinetic and diffusive regimes as $\eps\to0$ will be based on a careful analysis of the statistical moments of $u^\eps(z,x;t)$. Due to the complex circular Gaussian statistics in \eqref{eqn:u0_cov}, the $p+q$th moment of the process modeling the incident beam is given by
\begin{eqnarray}\label{eqn:u0_complex_Gaussian}
    \begin{aligned}
        \mathbb{E}[\prod\limits_{j=1}^pu_0^\eps(x_j;t_j)\prod\limits_{l=1}^qu_0^{\eps *}(y_l;t'_l)]=\begin{cases}
            0,\quad p\neq q\\
            \sum\limits_{\pi_p}\prod\limits_{j=1}^pF(t_j-t'_{\pi_p(j)})J^\eps(x_j,y_{\pi_p(j)}),\quad p=q\,,
        \end{cases}
    \end{aligned}
\end{eqnarray}
for a sum that runs over all permutations $\pi_p$ of $p$ integers~\cite{reed1962moment, mandel1995optical}. Moreover, using the simplifying notation  $v=(\xi_1,\ldots,\xi_p,\zeta_1,\ldots,\zeta_p)$ and $T=(t_1,\ldots,t_p,t'_1,\ldots,t'_p)$, the Fourier transform of the $p+p$th moment of the source is integrable, with $\|\hat{\mu}^\eps_{p,p}(0,v;T)\|\le p!\|F\|_\infty^p\|\hat{\Gamma}\|^p$. The $p+q$th moment for $p\neq q$ is zero identically. We use here $\|\cdot\|_\infty$ for the $\sL^\infty$ norm of a bounded (measurable) function. 

In the limit as $\eps\to0$, the relevant statistical information on the incident beam to describe the properties of the random field after large-distance propagation is captured by
\begin{equation}\label{eqn:Sp}
        \Sp(X,Y;T) :=\sum\limits_{\pi_p}\prod\limits_{j=1}^pF(t_j-t'_{\pi_p(j)})J_\theta(x_j,y_{\pi_p(j)}), \qquad \SpI(X):= \Sp(X,X;0)
\end{equation}
with $J_\theta$ defined as
\begin{equation}\label{eqn:J_theta}
    J_\theta(x,y)=\Gamma\Big(\frac{x+y}{2},\frac{x-y}{\theta}\Big)\,.
\end{equation}

Our derivation requires natural smoothness assumptions on the lateral spatial covariance of the random medium, which we summarize in the following hypothesis:
\begin{hypothesis}{(Assumptions on the covariance $R$) : }\label{hyp:medium}
    We assume that the spatial covariance $R(-x)=R(x)\in \sL^1(\mathbb{R}^d)\cap \sL^\infty(\mathbb{R}^d)$ which implies that the Fourier transform $\hat{R}(\xi)\in \sL^1(\mathbb{R}^d)\cap \sL^\infty(\mathbb{R}^d)$ as well. Since $\hat{R}$ is a power spectrum, we also have that $\hat{R}\ge 0$. We assume that $\hat{R}(\xi)\le \hat{\sfR}(\xi)=\hat{\sfR}(|\xi|)$ for some radially symmetric $\hat{\sfR}\in  \sL^1(\mathbb{R}^d)$ and $R(\tau + se)$ is integrable in $s$ for any $\tau\in\mathbb{R}^{d}$ and $e\in\mathbb{S}^{d-1}$. When the lateral dimension $d\ge 3$, we assume that $\langle\xi\rangle^{d-2}\hat{R}(\xi)\in\sL^\infty(\mathbb{R}^d)$, where $\langle \xi\rangle=\sqrt{1+|\xi|^2}$. 
    
    Furthermore in the diffusive regime, we require that $R(x)$ is maximal at $x=0$ and sufficiently smooth so that the Hessian 
\begin{equation}\label{eq:Hessian_Xi}
    \Xi:=\nabla^2 R(0)
\end{equation}
is defined and negative definite. 
\end{hypothesis}
In the diffusive regime with $\eta^{-1}=\ln\ln\eps^{-1}$, only the tensor $\Xi$ appears in the limiting models for the wavefield as $\eps\to0$.

\paragraph{Spatially rescaled random vector:} We finally consider the random process
\begin{eqnarray}
    \phi^\eps(z,r,x;t)=u^\eps(z,\eps^{-\beta} r+\eta x;t).
\end{eqnarray}
This process is rescaled so that it admits a non-trivial limit as $\eps\to0$ both in the kinetic and diffusive regimes.
The statistical moments of $\phi^\eps$ are denoted by 
\begin{eqnarray}
    m_{p,q}^\eps(z,r,X,Y;T)=\mathbb{E}[\prod_{j=1}^p\phi^\eps(z,r,x_j;t_j)\prod_{l=1}^q\phi^{\eps *}(z,r,y_l;t'_l)],
\end{eqnarray}
where $(X,Y)=(x_1,\ldots,x_p,y_1,\ldots,y_q)$ and $T=(t_1,\cdots,t_p,t'_1,\cdots,t'_q)$. For a collection of points $X=(x_1,\cdots,x_N)\in\mathbb{R}^{Nd}$ and $\tilde{T}=(t_1,\cdots,t_N)\in[0,\infty)^{N}$, we can also define a vector of such processes
\begin{eqnarray}
    \Phi^\eps(z,r,X;\tilde{T})=\big(\phi^\eps(z,r,x_1;t_1),\cdots,\phi^\eps(z,r,x_N;t_N)\big).
\end{eqnarray}
We may define the $p+q$th moments of this random vector as
\begin{eqnarray}\label{eqn:m_pp_random_vec}
    m_{p,q}^\eps(z,r,X_s,Y_s;T_s)=\mathbb{E}[\prod_{j=1}^p\phi^\eps(z,r,x_{s_j};t_{s_j})\prod_{l=1}^q\phi^{\eps *}(z,r,x_{s'_l};t'_{s'_l})]\,,
\end{eqnarray}
where $X_s=(x_{s_1},\cdots,x_{s_p}), Y_s=(x_{s'_1},\cdots,x_{s'_q})$, $T_s=(t_{s_1},\cdots,t_{s_p},t_{s'_1},\cdots,t_{s'_q})$ and $\{s_1,\cdots,s_p\}$ and $\{s'_1,\cdots,s'_q\}$ are integers drawn from $\{1,\cdots,N\}$ with replacement. We define the $p+q$th moments $M_{p,q}$ of the limiting random vector $\Phi$ in the same manner.

\paragraph{Statistical moment analysis and complex Gaussianity of coherent beams.}

One main technical advantage of the white noise model~\eqref{eqn:Ito_Schr} is that the following statistical moments of $u^\eps$ satisfy closed form partial differential equations:
 \begin{equation}\label{eq:muepspq}
\mu^\eps_{p,q}(z,X,Y;T)=\mathbb{E}[\prod\limits_{j=1}^pu^\eps(z,x_j;t_j)\prod\limits_{l=1}^q{u^\eps}^\ast(z,y_l;t'_l)]\,.
\end{equation}
These partial differential equations will be presented later in the text. They are central in the wavefield analysis of \cite{bal2024complex} and were used to analyze moments up to fourth order in~\cite{garnier2014scintillation, garnier2016fourth, garnier2018noninvasive, garnier2022scintillation, garnier2023fourth}. 

Under appropriate smoothness assumptions on the deterministic source, the analysis of such moments was used in \cite{bal2024complex} to demonstrate the circular Gaussian conjecture for broad incident beams in the diffusive regime. In particular, it was shown that for $\|\hat{\mu}_{p,q}^\eps(0)\|\le C^{p+q}$, the Fourier transform of $\mu^\eps_{p,q}$ admits the decomposition
\begin{equation}\label{eqn:mu_pq_Fourier_dec}
    \hat{\mu}^\eps_{p,q}(z,v)=\Pi^\eps_{p,q}(z,v)[N^\eps_{p,q}(z)\hat{\mu}^\eps_{p,q}(0)+E_{p,q}^\eps(z)\hat{\mu}^\eps_{p,q}(0)](v)\,,
\end{equation}
where \textcolor{black}{$v=(\xi_1,\cdots,\xi_p,\zeta_1,\cdots,\zeta_q)$ denotes the vector of Fourier variables dual to $(X,Y)$,} $N^\eps_{p,q}(z)$ and $E^\eps_{p,q}(z)$ are operators acting on $\mathcal{M}_B(\mathbb{R}^{(p+q)d})$ and $\Pi^\eps_{p,q}(z,v)$ is a unit magnitude complex exponential defined explicitly below in~\eqref{eqn:pi_pq}. $N^\eps_{p,q}(z)$ (defined in~\eqref{eqn:N_def}) is an operator constructed out of first and second moment type solution operators while the remainder satisfies $\|E_{p,q}^\eps(z)\|\le c(p,q,z)\eps^{\frac{1}{3}}$.  

The specific (Cartesian) product structure for coherent incident beams $\hat{\mu}^\eps_{p,q}(0)$ was then utilized in \cite{bal2024complex} to simplify the inverse Fourier transform, making it possible to conveniently express the $p+q$th moments in terms of combinations of first and second moments with a same error bounded in the uniform sense:
\begin{equation*}
  \begin{aligned}
      \mu_{p,q}^{\eps }(z,X,Y)&=\mathscr{F}(\mu^\eps_{1,0}(z, x_1),\ldots,\mu^\eps_{1,0}(z, x_p),\mu^\eps_{0,1}(z, y_1),\ldots,\mu^\eps_{0,1}(z, y_q),\mu^\eps_{1,1}(z, x_1,y_1),\ldots,\mu^\eps_{1,1}(z, x_p,y_q))
      \\&
      +O(\eps^{\frac{1}{3}})\,.
  \end{aligned}
\end{equation*}
Here, $\mathscr{F}$ is a continuous functional of $p+q+pq$ arguments defined in~\eqref{eqn:F_def}. This was used to prove a convergence in distribution of spatially rescaled random vectors $\Phi^\eps\boldsymbol{\Rightarrow}\Phi$ as defined above when $\eps\to 0$. In particular, in the diffusive regime $\Phi$ was indeed a circularly symmetric Gaussian random vector providing a proof of the Gaussian conjecture. The limiting intensity for such distributions then follows an exponential law with a correlation structure that is compatible with the speckle formation observed in laser beam experiments.

The main objective of this paper is to generalize approximations of the form \eqref{eqn:mu_pq_Fourier_dec} to the setting of partially coherent sources and to understand the interplay between the source randomness and the medium randomness in the kinetic and diffusive regimes as $\eps\to0$ as a function of the main decorrelation parameters of the source model $\tau_s$ and $\theta$.

For a fixed collection of points in time $T$, partially coherent sources as defined above still have moments in $\mathcal{M}_B(\mathbb{R}^{(p+q)d})$. This, along with the assumption that the medium is time independent, allows the decomposition~\eqref{eqn:mu_pq_Fourier_dec} to still hold. However, due to the lack of a Cartesian product structure in $\hat{\mu}^\eps_{p,q}(0,v;T)$, higher moments in general cannot be compactly represented through first and second moments in physical variables as above. In particular, the limiting intensity distribution of the spatially rescaled random vector $\phi^\eps(z,r,x;t)$ is not immediately evident. The additional randomness at the source potentially interacts with the randomness in the medium to generate non Gaussian statistics at the wave field, translating to intensity distributions that are no longer exponential.

\paragraph{Intensity and scintillation for partially coherent beams.}
The intensity $I^\eps$ of the rescaled process $\phi^\eps(z,r,x;t)$ is given by
\begin{eqnarray}
    I^\eps(z,r,x;t)=|\phi^\eps(z,r,x;t)|^2.
\end{eqnarray}
We expect, and will obtain, that such an intensity remains highly fluctuating in the limit $\eps\to0$, with fluctuations that are larger than in the setting of fully coherent (deterministic) incident beams since the random source introduces additional uncertainty.

However, a detector with an inherent time scale modeled by $\sfT$ typically cannot resolve the rapid oscillations at the scale $\tau_s$ and may be better modeled by the following averaged intensity of the rescaled process $\phi^\eps$:
\begin{equation}\label{eqn:intens_time_avg}
        I^\eps_{\sfT}(z,r,x;t)=\frac{1}{\sfT}\int\limits_{0}^\sfT I^\eps(z,r,x;t+t')\mathrm{d}t'.
\end{equation}
We have assumed here an integral modeled by a measure $\frac{1}{\sfT}\chi_{[0,\sfT]}(t)dt$, which could easily be replaced by a more general normalized weight $\nu_{\sfT}(t)dt$. We consider the above model for simplicity.

The statistical stability of this time averaged intensity can be quantified through the (time averaged) scintillation index
\begin{eqnarray}\label{eqn:scint_time_avg}
     \sfS_\sfT^\eps(z,r,x;t)=\frac{\mathbb{E}[I^\eps_\sfT(z,r,x;t)^2]-\mathbb{E}[I^\eps_\sfT(z,r,x;t)]^2}{\mathbb{E}[I^\eps_\sfT(z,r,x;t)]^2}\,.
\end{eqnarray}

For $\sfT$ large compared to $\tau_s$, we expect $I^\eps_{\sfT}$ to be essentially independent of $t$. This is the regime where we expect a significant reduction of the scintillation index.

\section{Main results}
\label{sec:main_results}

We now state the main results obtained in this paper. The first three results in Theorems \ref{thm:large_corr_kinetic} and \ref{thm:large_corr_diffusive} analyze the limits of finite dimensional distributions of the process $\phi^\eps$ in the kinetic and diffusive limits $\eps\to0$, respectively. Corollary \ref{coro:intensity_large_corr} then concerns the limit of the intensity $I^\eps$. Our main convergence result on the averaged intensity modeling detector readings is given in the diffusive regime in Theorem \ref{thm:time_avg_intens_limit} while
Corollary \ref{coro:scint_large_correl} presents the results for the scintillation index. The above results involve the analysis of finite dimensional distributions combined with an a priori compactness result obtained in our final Theorem \ref{thm:tightness}.

The statistics of the incident beam at $z=0$ that are relevant in the limits $\eps\to0$ are described in \eqref{eqn:Sp}. The full statistics of the random beam at larger values of $z$ involve several terms whose explicit expressions are not easily summarized and hence are presented in detail in later sections. Only $T-$dependent terms involve the statistics of the source and its scaling parameters $(\theta,\tau_s/\sfT)$.

\begin{theorem}[Kinetic regime]\label{thm:large_corr_kinetic} In the kinetic regime $\eta=1$, the mean zero random vector $\Phi^\eps\boldsymbol{\Rightarrow}\Phi$ in distribution as $\eps\to 0$ where $\Phi$ is a random vector with moments $M_{p,p}$ given by
\begin{equation}\label{eq:Mppkinetic}
  M_{p,p}(z,r,X,Y;T)=\begin{cases}
  \Sp(r,\ldots,r;T)\mathscr{F}\Big(M^\infty_{1,0}(z,r,x_j)_{j},M^\infty_{0,1}(z,r,y_l)_{l},M^\infty_{1,1}(z,r,x_j,y_l)_{j,l}\Big),\quad  \beta>1 \!\!\!\!\!\!\!\!\!\!\!\!\\
      \Big(M_{p,p}^\infty(z,\cdot,X,Y) \ast\Sp(\cdot;T)\Big)(r,\ldots,r),\quad \qquad  {\beta=1} \\ 
\mathscr{G}\big(M_{0,0}(z,r;t_j,t'_l)_{j,l},M_{1,1}(z,r,x_j,y_l;t_j,t'_l)_{j,l}\big),\quad \beta=1, \theta\to 0\,.
  \end{cases}
\end{equation}

In the case $\beta>1$, $M^\infty_{1,1}$ is given by~\eqref{eqn:M_11_inf_kinetic}, $\mathscr{F}$ is defined in~\eqref{eqn:F_def} and $M_{1,0}^\infty(z,r,x)=M_{0,1}^\infty(z,r,x)=e^{-\frac{k_0^2R(0)z}{8}}$. In the case $\beta=1$, $M_{p,p}^\infty$ is given by~\eqref{eqn:M_pp_inf_kinetic}, $\mathscr{G}$ is defined in~\eqref{eqn:G_def} and $M_{0,0}, M_{1,1}$ are given by~\eqref{eqn:M_00_kinetic} and~\eqref{eqn:M_11_kinetic} respectively. In all cases, $\Sp$ is given by~\eqref{eqn:Sp} and $M_{p,q}=0$ when $p\neq q$. 
\end{theorem}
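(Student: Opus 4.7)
The plan is to combine the Fourier-space moment decomposition~\eqref{eqn:mu_pq_Fourier_dec} with the permutation structure of the Gaussian incident beam in~\eqref{eqn:u0_complex_Gaussian}, to compute the limits of all finite dimensional moments $m^\eps_{p,q}\to M_{p,q}$, and then to promote this moment convergence to the distributional convergence $\Phi^\eps\boldsymbol{\Rightarrow}\Phi$ using the a priori tightness and stochastic continuity of $\Phi^\eps$ provided by Theorem~\ref{thm:tightness}. Since the off-diagonal moments $\mu^\eps_{p,q}$ vanish identically when $p\neq q$, only the diagonal case $p=q$ is at stake.

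For fixed $T$ the initial moment $\hat{\mu}^\eps_{p,p}(0,v;T)$ is a finite linear combination, indexed by permutations $\pi_p$, of terms $\prod_j F(t_j-t'_{\pi_p(j)})\hat{J}^\eps(\xi_j,\zeta_{\pi_p(j)})$ that individually do have the Cartesian product structure in pair variables required by the analysis of~\cite{bal2024complex}. For each $\pi_p$, I would apply the decomposition~\eqref{eqn:mu_pq_Fourier_dec}, discard the remainder $E^\eps_{p,p}$ using its $O(\eps^{1/3})$ operator norm bound (which is uniform in $T$ because $\|F\|_\infty=F(0)=1$), and invert the Fourier transform via the argument of~\cite{bal2024complex} to obtain a coherent-source type expression of the form $\mathscr{F}(\mu^\eps_{1,0},\mu^\eps_{0,1},\mu^\eps_{1,1})$ in which the second moments are evaluated along the pairing $\{(x_j,y_{\pi_p(j)})\}_j$ prescribed by the permutation. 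Summing over $\pi_p$ and passing to the limit then reconstructs either $\Sp(r,\ldots,r;T)$ or one of its convolutional variants, depending on the regime.

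The three cases in~\eqref{eq:Mppkinetic} correspond to how the spatial scaling of the source correlation $\Gamma^\eps(r,\sigma)=\Gamma(\eps^\beta r, \eps^\beta\sigma/\theta)$ interacts with the transverse separations $x_j-y_l=O(1)$ between the evaluation points of $\phi^\eps(z,r,x_j;t_j)=u^\eps(z,\eps^{-\beta}r+x_j;t_j)$. When $\beta>1$, the arguments of $\Gamma$ at neighbouring evaluation points all collapse to the single macroscopic base point $r$, so $\Sp(r,\ldots,r;T)$ factors out of every permutation contribution and simply multiplies the deterministic-source functional $\mathscr{F}$. When $\beta=1$, the source correlation varies at precisely the same scale as the diffractive spreading of $M^\infty_{p,p}$, and the two combine through the convolution in the second line of~\eqref{eq:Mppkinetic}. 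Finally, in the regime $\beta=1$ with $\theta\to 0$, the function $J_\theta(x,y)=\Gamma((x+y)/2,(x-y)/\theta)$ concentrates on the diagonal $x=y$, so the permutation contributions essentially decouple into pairwise-matched Gaussian pieces, and the surviving combinations are encoded by the functional $\mathscr{G}$.

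The chief technical obstacle is justifying the interchange of the limit $\eps\to 0$ with the inverse Fourier transform uniformly across the permutation sum and across $T$. This rests on two ingredients: the total variation bound $\|\hat{\mu}^\eps_{p,p}(0,\cdot;T)\|\le p!\|F\|^p_\infty\|\hat{\Gamma}\|^p$ noted after~\eqref{eqn:u0_complex_Gaussian}, which controls the action of the remainder $E^\eps_{p,p}$ in operator norm after integration, and the explicit description of $\Pi^\eps_{p,p}N^\eps_{p,p}$ in terms of the limiting first and second moment operators from~\cite{bal2024complex}. With pointwise moment convergence established, and since the limiting moments grow at most factorially in $p$ (so that a Carleman-type criterion identifies the law uniquely), the tightness supplied by Theorem~\ref{thm:tightness} upgrades the convergence of finite-dimensional distributions to the full convergence $\Phi^\eps\boldsymbol{\Rightarrow}\Phi$.
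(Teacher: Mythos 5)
Your overall architecture is the paper's: use the decomposition \eqref{eqn:mu_pq_Fourier_dec}, discard $E^\eps_{p,p}$ via its $O(\eps^{1/3})$ operator bound (uniform in $T$ because $\|\hat\mu^\eps_{p,p}(0,\cdot;T)\|\le p!\|F\|_\infty^p\|\hat\Gamma\|^p$), pass to the limit in $N^\eps_{p,p}$ applied to the source in the three regimes, and conclude by the method of moments since $M_{p,p}$ grows at most like $C^pp!^2$. One small correction: since $\Phi^\eps$ is a finite-dimensional random vector, moment convergence together with the Carleman-type growth bound already yields $\Phi^\eps\boldsymbol{\Rightarrow}\Phi$; Theorem~\ref{thm:tightness} is only invoked in the paper to upgrade to process-level convergence on $C^0(\R^d\times\R)$, not to prove Theorem~\ref{thm:large_corr_kinetic}.

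The genuine gap is the per-permutation reduction to the coherent-source functional. A single permutation term $\prod_jF(t_j-t'_{\pi_p(j)})\hat J^\eps(\xi_j,\zeta_{\pi_p(j)})$ is a product of \emph{two-point} kernels, not a Cartesian product $\prod_j\hat u_0^\eps(\xi_j)\prod_l\hat u_0^{\eps*}(\zeta_l)$ of one-variable factors, and this is exactly the structure the paper flags as lost for partially coherent sources. The operators $U^\eps_\gamma$ entering $N^\eps_{p,p}$ run over \emph{all} pairings $\gamma=(j,l)$; when $l\neq\pi_p(j)$ the shift $v\mapsto v-A_{j,l}k$ entangles the two distinct factors $\hat J^\eps(\xi_j,\zeta_{\pi_p(j)})$ and $\hat J^\eps(\xi_{\pi_p^{-1}(l)},\zeta_l)$, so $U^\eps_\gamma$ does not generate a second moment $\mu^\eps_{1,1}(z,x_j,y_l)$ with any well-defined initial condition, and no expression of the form $\mathscr{F}$ ``with second moments evaluated along the pairing $\pi_p$'' emerges. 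The correct limits in \eqref{eq:Mppkinetic} are not of that shape either: for $\beta>1$ the whole permutation sum factors out as the prefactor $\Sp(r,\ldots,r;T)$ while $\mathscr{F}$ retains all $p^2$ second-moment arguments $M^\infty_{1,1}(z,r,x_j,y_l)$, matched or not; for $\beta=1$ one obtains a convolution that mixes every medium pairing $\Lambda_\kappa$ with every source permutation $\pi_p$; and only as $\theta\to0$ do the mismatched pairings drop out, because $\mathcal{R}$ evaluated at the argument containing $\theta^{-1}(\xi_{\gamma_1}-\xi_{\pi_p^{-1}(\gamma_2)})$ tends to $1$, which is what produces $\mathscr{G}$. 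The paper's route is to keep the general integral representation of $\tilde N^\eps_{p,p}$ acting on an arbitrary integrable initial moment (Propositions~\ref{prop:N_approx_phy} and~\ref{prop:m_pp}), rescale $v\to\eps^{-\beta}v$, and read off the regime-dependent structure from the limit of $\mathcal{R}\big(\eta(y_{\gamma_2}-x_{\gamma_1}),\eta\eps^{\beta-1}(\xi_{\gamma_1}-\zeta_{\gamma_2})\big)$ under the integral against $\hat\Sp(v;T)$; you should substitute that computation for the per-permutation $\mathscr{F}$-reduction.
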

\begin{theorem}[Diffusive regime]\label{thm:large_corr_diffusive} In the diffusive regime, the mean zero random vector $\Phi^\eps\boldsymbol{\Rightarrow}\Phi$ in distribution as $\eps\to 0$ where $\Phi$ is a random vector with moments given by
\begin{eqnarray}\label{eq:Mppdiffusive}
  M_{p,p}(z,r,X,Y;T)=\begin{cases}
  \Sp(r,\ldots,r;T) \sum\limits_{\pi_p}\prod\limits_{j=1}^pM_{1,1}^\infty(z,r,x_j,y_{\pi_p(j)}), \quad &\beta>1\\
   \Big(M_{p,p}^\infty(z,\cdot,X,Y)\ast\Sp(\cdot;T)\Big)(r,\ldots,r),\quad 
 & {\beta=1} 
 \\    
 \sum\limits_{\pi_p}\prod\limits_{j=1}^pM_{1,1}(z,r,x_{j},y_{\pi_p(j)};t_{j},t'_{\pi_p(j)}),\quad &\beta=1, \ \theta\to 0\,.
  \end{cases}
  \end{eqnarray}
Here, $M_{1,1}^\infty$ is given by~\eqref{eqn:M_11_inf_diff} when $\beta>1$ independent of $\theta$. $M_{p,p}^\infty$ is given by~\eqref{eqn:M_pp_inf_diff} and $M_{11}$ is given by~\eqref{eqn:M_11_diffusive} when $\beta=1$. In all cases, $\Sp$ is given by~\eqref{eqn:Sp} and $M_{p,q}=0$ when $p\neq q$.
\end{theorem}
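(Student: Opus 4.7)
}

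The plan is to combine the Gaussian permutation structure of the random-source moments~\eqref{eqn:u0_complex_Gaussian} with the diffusive-regime asymptotics of the medium-propagation operator $N^\eps_{p,p}(z)$ developed in \cite{bal2024complex}. Since the medium is independent of the source and $\hat{\mu}^\eps_{p,p}(0,\cdot;T)\in\mathcal{M}_B(\mathbb{R}^{2pd})$ has total variation bounded by $p!\|F\|_\infty^p\|\hat{\Gamma}\|^p$ uniformly in $T$ and $\eps$, the Fourier decomposition~\eqref{eqn:mu_pq_Fourier_dec} applies with a remainder $O(\eps^{1/3})$ uniform in $T$. The decisive feature of the diffusive scaling, as established in \cite{bal2024complex}, is that the general functional $\mathscr{F}$ arising from $N^\eps_{p,p}$ collapses to a sum over pairings of two-point functions; this is the analytic manifestation of circular Gaussianity of the wavefield conditional on the source. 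First I would substitute the source Wick-permutation sum into this simplified expression, giving a double sum over pairings (one over source contractions, one over medium contractions) that I would then analyze case by case.

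For $\beta>1$ the source correlation length $\eps^{-\beta}$ dominates the diffusive transverse scale $\eta^{-1}$, so after rescaling to the $\Phi^\eps$-variables all spatial arguments of $\Sp$ collapse to $r$; this produces the overall factor $\Sp(r,\ldots,r;T)$ in front of the medium contribution $\sum_{\pi_p}\prod_j M_{1,1}^\infty(z,r,x_j,y_{\pi_p(j)})$, with $M_{1,1}^\infty$ independent of $\theta$ as in~\eqref{eqn:M_11_inf_diff}. For $\beta=1$, the source and diffusive transverse scales coincide, so the $(X,Y)$-integrations against the source Fourier measure $\hat{\Gamma}^\eps$ do not collapse pointwise and yield instead the convolution of the medium kernel $M_{p,p}^\infty$ from~\eqref{eqn:M_pp_inf_diff} with $\Sp(\cdot;T)$ evaluated on the diagonal $(r,\ldots,r)$. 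For $\beta=1$ with $\theta\to 0$, the covariance $J_\theta(x,y)$ concentrates on the diagonal $x=y$; passing this additional limit through the convolution forces the source pairing permutation to agree with the medium pairing permutation, merging the two permutation sums into a single one and absorbing the time decorrelation factors $F(t_j-t'_{\pi_p(j)})$ into the two-point function $M_{1,1}(z,r,x_j,y_{\pi_p(j)};t_j,t'_{\pi_p(j)})$ of~\eqref{eqn:M_11_diffusive}.

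The hardest step is the double limit $\eps\to 0$ followed by $\theta\to 0$ in the third case: one must show that the $O(\eps^{1/3})$ error in~\eqref{eqn:mu_pq_Fourier_dec} is uniform in $\theta\in(0,1]$, and that the $\theta\to 0$ passage can be exchanged with Fourier inversion. Uniformity in $\theta$ follows from the scale-invariant identity $\|\hat{\Gamma}^\eps\|=\|\hat{\Gamma}\|$, since the constants $c(p,q,z)$ in~\eqref{eqn:mu_pq_Fourier_dec} depend only on $p,q,z$ and on total variation norms of the initial data. The $\theta\to 0$ limit itself is handled by recognizing that the rescaled measure $\theta^d\hat{\Gamma}(\eps^{-\beta}\zeta,\theta\eps^{-\beta}\xi)\,d\zeta\,d\xi$ concentrates at $\xi=0$ in the dual variable, which precisely suppresses the off-diagonal source pairings and reorganizes the $M_{1,1}^\infty$ factors into the $M_{1,1}$ factors that carry the time decorrelation.

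With all moments $M_{p,q}$ identified in each regime (and $M_{p,q}=0$ for $p\neq q$ by the Gaussian structure of both source and medium), convergence of finite-dimensional distributions follows from the method of moments using the factorial-in-$p$ bounds inherited from $\|\hat{\mu}^\eps_{p,p}(0;T)\|\le p!\|F\|_\infty^p\|\hat{\Gamma}\|^p$. The a priori tightness and stochastic continuity supplied by Theorem~\ref{thm:tightness} then promote this to the distributional convergence $\Phi^\eps\boldsymbol{\Rightarrow}\Phi$ as random vectors, completing the argument.
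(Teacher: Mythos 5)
Your plan is correct and follows essentially the same route as the paper: the $N^\eps_{p,p}+E^\eps_{p,p}$ decomposition with a $T$-uniform $O(\eps^{1/3})$ remainder, the collapse in the diffusive regime to fully paired terms only (since $U_\eta^p\prod_{\gamma\in\Lambda_\kappa}(\mathcal{R}-1)\to0$ whenever $m(\kappa)<p$), the same three-case treatment of the source scaling, and the method of moments with the $C^p(p!)^2$ bounds. Two minor points if you write this out: in the $\theta\to 0$ step the rescaled source measure spreads out rather than concentrates in the dual variable $\xi$, and the suppression of mismatched pairings actually comes from $\mathcal{R}(\tau,\tau')-1\to 0$ as the second argument scales like $\theta^{-1}$ (using the integrability of $R$ along lines); also, tightness is not needed for the finite-dimensional convergence asserted here --- the moment bounds suffice, and Theorem~\ref{thm:tightness} is only invoked for the process-level convergence.
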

\begin{corollary}[Intensity distribution]\label{coro:intensity_large_corr}
    In the diffusive regime, the intensity $I^\eps(z,r,x;t)=|\phi^\eps(z,r,x;t)|^2\boldsymbol{\Rightarrow} I(z,r)$ in distribution as $\eps\to 0$, where $I(z,r)$ is a random vector with moments given by
    \begin{eqnarray}\label{eqn:I}
    \mathbb{E}[I(z,r)^p]=\begin{cases}
    \Gamma(r,0)^pp!^2,\quad & \beta>1\\
          p![G_p(z^3,\cdot)\ast \SpI(\cdot)](r,\ldots,r),\quad & {\beta=1} \\ 
       p!\mathbb{E}[I](z,r)^p,\quad&\beta=1,\ \theta\to 0\,,
    \end{cases}
\end{eqnarray}
where $G_p$ follows a diffusion equation~\eqref{eqn:high_dim_diff} and the source $\SpI(X)$ is given by~\eqref{eqn:Sp}. In particular, the average intensity $\mathbb{E}[I](z,r)$ follows a diffusion equation given by~\eqref{eqn:E_I_large_correl_beta=1}, \eqref{eqn:G_large_correl_beta=1} with source $\Gamma(r,0)$  when $\beta=1$.
\end{corollary}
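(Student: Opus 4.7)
The plan is to use the method of moments. For each $p\ge 1$, the $p$-th moment of $I^\eps(z,r,x;t)=|\phi^\eps(z,r,x;t)|^2$ equals the diagonal specialization $m_{p,p}^\eps(z,r,X_\star,Y_\star;T_\star)$ with $X_\star=Y_\star=(x,\ldots,x)$ and all times equal to $t$, and Theorem \ref{thm:large_corr_diffusive} provides the convergence $m_{p,p}^\eps\to M_{p,p}$ in the diffusive regime. In each of the three cases below, the candidate limit moments grow at most like $C^p(p!)^2$, for which Carleman's determinacy condition $\sum_p \mathbb{E}[I^p]^{-1/(2p)}=\infty$ holds; the moment problem on $[0,\infty)$ is therefore determinate and moment convergence upgrades to convergence in distribution of $I^\eps$ to a well-defined random variable $I(z,r)$.

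For $\beta>1$ I evaluate the first line of \eqref{eq:Mppdiffusive} at the diagonal. The source factor gives $\Sp(r,\ldots,r;T_\star)=p!\,\Gamma(r,0)^p$, since every permutation contributes the same $\prod_j F(0)J_\theta(r,r)=\Gamma(r,0)^p$. The permutation sum over medium second moments collapses to $p!\,[M_{1,1}^\infty(z,r,x,x)]^p$, and the coincident-point value $M_{1,1}^\infty(z,r,x,x)=1$ by the intensity-conservation property of the It\^o--Schr\"odinger second-moment equation (the $R(0)$ dissipation in \eqref{eqn:Ito_Schr} cancels the It\^o correction of the noise at coincident spatial arguments). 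Multiplying yields $(p!)^2\Gamma(r,0)^p$, the factorial-moment signature of a product of two independent exponentials, consistent with source and medium each contributing an independent exponential factor in the effective plane-wave limit. The case $\beta=1,\theta\to 0$ is analogous: the diagonal product collapses to $p!\,[M_{1,1}(z,r,x,x;t,t)]^p=p!\,\mathbb{E}[I(z,r)]^p$, which identifies $I(z,r)$ as an exponential law with mean $\mathbb{E}[I(z,r)]$.

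The general $\beta=1$ case is the most delicate. Diagonal reduction of the convolution in \eqref{eq:Mppdiffusive} produces $p!\,[G_p(z^3,\cdot)\ast \SpI(\cdot)](r,\ldots,r)$, where the $\SpI$ factor comes from definition \eqref{eqn:Sp} upon setting $X=Y$ and $T=0$, and the Green's kernel $G_p$ is identified with the restriction of $M_{p,p}^\infty(z,\cdot,X,Y)$ to $X=Y=(x,\ldots,x)$; this restriction, by the coherent-beam analysis of \cite{bal2024complex}, solves the anomalous high-dimensional diffusion \eqref{eqn:high_dim_diff} with the characteristic $z^3$ scaling set by the Hessian $\Xi$ of \eqref{eq:Hessian_Xi}. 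The prefactor $p!$ arises from the $p!$ permutations of $\SpI$ that remain after diagonal evaluation. Reading off $p=1$ gives the diffusion equation for $\mathbb{E}[I](z,r)$ with source $\Gamma(r,0)=J_\theta(r,r)$.

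The main obstacle is the bookkeeping in this last case: identifying $G_p$ with the diagonal reduction of $M_{p,p}^\infty$ requires unfolding the convolution on the high-dimensional transverse space, exploiting the translation structure of $M_{p,p}^\infty$ in the centered variables relative to the slow coordinate $r$, and matching against the explicit Green's kernel derived in \cite{bal2024complex}. Tracking which permutations of $\Sp$ collapse upon diagonal evaluation and which remain absorbed into the convolution is the step most prone to notational error, but no new analytic input beyond Theorem \ref{thm:large_corr_diffusive} and the Carleman check is required.
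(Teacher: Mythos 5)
Your proposal is correct and follows essentially the same route as the paper: evaluate the limiting moments $M_{p,p}$ from Theorem~\ref{thm:large_corr_diffusive} on the diagonal $X=Y=(x,\ldots,x)$, $T=(t,\ldots,t)$, use the permutation symmetry $\Sp(X',X'_{\pi_p};0)=\SpI(X')$ to extract the factor $p!$ and reduce the $\beta=1$ case to the convolution $p!\,[G_p(z^3,\cdot)\ast\SpI](r,\ldots,r)$, and conclude via moment determinacy from the $C^p(p!)^2$ growth bound. Your Carleman check and the intensity-conservation reading of $M_{1,1}^\infty(z,r,x,x)=1$ (which the paper gets directly from \eqref{eqn:M_11_inf_diff}) are cosmetic variations, not a different argument.
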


Let 
\begin{eqnarray}\label{eqn:F_p(T)}
F_{p}(\sfT)= \frac{1}{\sfT^p}\int\limits_{[0,\sfT]^p}\sum\limits_{\pi_p}\prod
_{j=1}^pF(t_j-t_{\pi_p(j)})^2\mathrm{d}t_1\ldots\mathrm{d}t_p\,,
\end{eqnarray}
with limiting cases
\begin{equation*}
    F_p(\sfT)\to\begin{cases}
            p!,\quad & \sfT\to 0\\
        1,\quad & \sfT\to \infty\,.
    \end{cases}
\end{equation*}
Then we have the following:
\begin{theorem}[Statistics of time averaged intensity]\label{thm:time_avg_intens_limit}In the diffusive regime, we have that
\begin{eqnarray*}
I^\eps_\sfT(z,r,x;t)\boldsymbol{\Rightarrow} I_\sfT(z,r)
\end{eqnarray*}
    in distribution as $\eps\to 0$ where $I_\sfT(z,r)$ is a non negative random vector with moments given by 
    \begin{eqnarray*}
        \mathbb{E}[I_\sfT(z,r)^p]=\frac{1}{p!}\mathbb{E}[I(z,r)^p]F_p(\sfT)\,
    \end{eqnarray*}
    where $\mathbb{E}[I(z,r)^p]$ is given by Corollary~\ref{coro:intensity_large_corr} and $F_p(\sfT)$ by~\eqref{eqn:F_p(T)}.
\end{theorem}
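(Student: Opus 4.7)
The proof splits into three tasks: a Fubini-type formula reducing moments of $I^\eps_\sfT$ to a time-integrated $(p,p)$ moment of $\phi^\eps$, a dominated-convergence passage to the $\eps\to 0$ limit based on Theorem~\ref{thm:large_corr_diffusive}, and a moment-method upgrade to convergence in distribution.

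First, I would start from the identity
\begin{equation*}
\mathbb{E}[I^\eps_\sfT(z,r,x;t)^p] = \frac{1}{\sfT^p}\int_{[0,\sfT]^p} m^\eps_{p,p}(z,r,X_*,X_*;T_*)\,\mathrm{d}s_1\cdots\mathrm{d}s_p,
\end{equation*}
with $X_* = (x,\ldots,x)$ and $T_* = (t+s_1,\ldots,t+s_p,t+s_1,\ldots,t+s_p)$, obtained by Fubini using the moment bounds from Section~\ref{sec:description}. The paired structure $t_j = t'_j = t + s_j$ is what forces the source-correlation $\Sp(r,\ldots,r;T_*)$ to enter through permutation sums of $\prod_j F(s_j - s_{\pi_p(j)})$-type products, which is precisely what will generate $F_p(\sfT)$ after time integration.

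Next I would pass $\eps\to 0$ inside the integral. The pointwise convergence $m^\eps_{p,p}\to M_{p,p}$ is given by Theorem~\ref{thm:large_corr_diffusive}, and the $\eps$-uniform domination required for dominated convergence is furnished by the Fourier decomposition~\eqref{eqn:mu_pq_Fourier_dec} combined with the $T_*$-uniform estimate $\|\hat{\mu}^\eps_{p,p}(0,\cdot;T_*)\|\le p!\|F\|_\infty^p\|\hat\Gamma\|^p$ recorded in Section~\ref{sec:description}. Once the limit is inside, I would identify $M_{p,p}(z,r,X_*,X_*;T_*)$ using the appropriate case of Theorem~\ref{thm:large_corr_diffusive}: at coincident spatial points the medium-side permutation sum collapses, the spatial factor reproduces $\mathbb{E}[I(z,r)^p]/p!$ via Corollary~\ref{coro:intensity_large_corr}, and the remaining temporal factor integrates over $[0,\sfT]^p$ to $F_p(\sfT)$ by definition~\eqref{eqn:F_p(T)}, delivering the claimed identity
\begin{equation*}
\mathbb{E}[I_\sfT(z,r)^p] = \frac{1}{p!}\mathbb{E}[I(z,r)^p]F_p(\sfT).
\end{equation*}

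Convergence in distribution then follows by the moment method. The finite-dimensional convergence $\Phi^\eps\Rightarrow\Phi$ from Theorem~\ref{thm:large_corr_diffusive} combined with the tightness and stochastic continuity from Theorem~\ref{thm:tightness} promotes to process-level convergence of $\phi^\eps(z,r,x;\cdot)$ on $[t,t+\sfT]$; time-averaging of $|\phi|^2$ is a continuous functional, so the continuous-mapping theorem yields $I^\eps_\sfT\Rightarrow I_\sfT$. Non-negativity of $I_\sfT$ together with the bound $\mathbb{E}[I_\sfT^p]\le C^p(p!)^2$ (from the Corollary and the elementary estimate $F_p(\sfT)\le p!$) satisfy Carleman's criterion for the Stieltjes moment problem, so the limit is uniquely identified. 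The main obstacle I anticipate is the combinatorial identification in the second step: unwinding the two permutation sums in $M_{p,p}$ (one from $\Sp$, one from the medium-side Wick pairings, and in the $\beta=1$ case the additional convolution against $M^\infty_{p,p}$) at the configuration $X_*=Y_*$, $t_j=t'_j$, and verifying that the temporal piece integrates to exactly $F_p(\sfT)$. A secondary technical point is the $T_*$-uniform domination of $m^\eps_{p,p}$, which requires propagating the TV bound on $\hat{\mu}^\eps_{p,p}(0)$ through the operators $N^\eps_{p,p}$ and $E^\eps_{p,p}$ of~\eqref{eqn:mu_pq_Fourier_dec} uniformly in the time arguments, relying on $F\in L^1\cap L^\infty$.
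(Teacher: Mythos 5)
Your proposal is correct and follows essentially the same route as the paper: reduce $\mathbb{E}[I^\eps_\sfT^p]$ to a time integral of $m^\eps_{p,p}$ at coincident spatial points and paired times, pass to the limit via Theorem~\ref{thm:large_corr_diffusive}, use the permutation symmetry of $\Sp$ to factor the limit into $\frac{1}{p!}\mathbb{E}[I(z,r)^p]$ times the temporal permutation sum that integrates to $F_p(\sfT)$, and conclude by moment determinacy. The combinatorial collapse you flag as the main obstacle is handled in the paper exactly as you anticipate, through the identity $\Sp(X',X'_{\pi_p};0)=\SpI(X')$ in~\eqref{eqn:mu_pp_symm}.
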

Now let 
\begin{eqnarray*}
   F_2(\sfT)=1+ F_\sfT,\quad F_\sfT=\frac{1}{\sfT^2}\int\limits_{[0,\sfT]^2}F\Big(\frac{t_1-t_2}{\tau_s}\Big)^2\mathrm{d}t_1\mathrm{d}t_2.
\end{eqnarray*}
We note that $F_\sfT\to1$ as $\sfT\to0$ while $F_\sfT\to0$ as $\sfT\gg \tau_s$. Then we have the following Corollary.
\begin{corollary}[Time averaged scintillation]\label{coro:scint_large_correl}In the diffusive regime,
       \begin{equation}\label{eqn:E_I^2_large_corr}
           \mathbb{E}[I^\eps_\sfT(z,r,x;t)^2]\to\big(\mathbb{E}[I](z,r)^2+\chi(z,r)\big)(1+F_\sfT)\,,
       \end{equation}
    as $\eps\to 0$   with $\mathbb{E}[I](z,r)$ as in~\eqref{eqn:E_I_large_correl_beta=1}, \eqref{eqn:G_large_correl_beta=1} and $\chi(z,r)\le \mathbb{E}[I](z,r)^2$ is given by
              \begin{equation}
       \chi(z,r)=\chi(z,r;\theta)=
           \begin{cases}
               &\mathbb{E}[I](z,r)^2,\quad \beta>1\\
                  &\Big(\frac{12\theta}{z^3}\Big)^d\frac{1}{|\Xi|}\int\limits_{\mathbb{R}^{2d}}e^{\frac{12}{z^3}(r-r')^\tps\Xi^{-1}(r-r')}e^{\frac{3\theta^2}{z^3}\sigma'^\tps\Xi^{-1}\sigma'}\Gamma(r',\sigma')^2\frac{\mathrm{d}r'\mathrm{d}\sigma'}{(2\pi)^d},\quad\beta=1\,.
               \end{cases}
       \end{equation}
              In particular, the scintillation index~\eqref{eqn:scint_time_avg} is asymptotically upper bounded by $3$ with
       \begin{equation}\label{eqn:scint_time_avg_eps}
           \sfS_\sfT^\eps(z,r,x;t)\to \sfS_\sfT(z,r)\,,
       \end{equation}
    as $\eps\to 0$   where
       \begin{eqnarray}\label{eqn:Scint_large_corr}
           \sfS_\sfT(z,r)=\begin{cases}
               1 + 2 F_\sfT,\quad &\beta>1\\
               F_\sfT+\frac{\chi(z,r)}{\mathbb{E}[I](z,r)^2}(1+F_\sfT),\quad & {\beta=1} \\
               F_\sfT, \quad  &\beta=1, \ \theta\to 0\,.
           \end{cases}
       \end{eqnarray}
Moreover when $\beta=1$, 
\begin{eqnarray}\label{eqn:Scint_z_infty}
    \lim_{z\to\infty} \sfS_\sfT(z,r)= F_\sfT+\frac{{\theta^d} \dint_{\mathbb{R}^{2d}}\Gamma(r,\sigma)^2\mathrm{d}r\mathrm{d}\sigma}{\Big(\dint_{\mathbb{R}^{d}}\Gamma(r,0)\mathrm{d}r\Big)^2}(1+F_\sfT)\,.
\end{eqnarray}
\end{corollary}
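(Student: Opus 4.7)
The plan is to deduce everything from Theorem~\ref{thm:time_avg_intens_limit} and Corollary~\ref{coro:intensity_large_corr} by a second-moment computation. Taking $p=1$ and $p=2$ in Theorem~\ref{thm:time_avg_intens_limit}, together with the elementary identifications $F_1(\sfT)=1$ and $F_2(\sfT)=1+F_\sfT$, gives $\mathbb{E}[I^\eps_\sfT(z,r,x;t)]\to \mathbb{E}[I](z,r)$ and $\mathbb{E}[I^\eps_\sfT(z,r,x;t)^2]\to \tfrac{1}{2}\mathbb{E}[I(z,r)^2](1+F_\sfT)$ as $\eps\to 0$. Thus~\eqref{eqn:E_I^2_large_corr} and the limiting scintillation formulas~\eqref{eqn:scint_time_avg_eps}--\eqref{eqn:Scint_large_corr} both reduce to proving the decomposition $\tfrac{1}{2}\mathbb{E}[I(z,r)^2]=\mathbb{E}[I](z,r)^2+\chi(z,r)$ with $\chi$ as stated; once this is in hand, the identity $\sfS_\sfT=F_\sfT+\tfrac{\chi}{\mathbb{E}[I]^2}(1+F_\sfT)$ is pure arithmetic, and specialising in each case recovers~\eqref{eqn:Scint_large_corr}.

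For the identification of $\chi$ I would expand the two permutations in $\SpI$ from~\eqref{eqn:Sp} using $\Gamma(r,-\sigma)=\Gamma(r,\sigma)$ to obtain $S_2^I(x_1,x_2)=\Gamma(x_1,0)\Gamma(x_2,0)+\Gamma(\tfrac{x_1+x_2}{2},\tfrac{x_1-x_2}{\theta})^2$. Substituting into the $\beta=1$ formula $\tfrac12\mathbb{E}[I(z,r)^2]=[G_2(z^3,\cdot)\ast S_2^I(\cdot)](r,r)$ coming from Corollary~\ref{coro:intensity_large_corr}, the product-form piece convolves with $G_2$ on the diagonal to reproduce $\mathbb{E}[I](z,r)^2$ (using the factorisation of $G_2$ on the diagonal into two copies of the Green's function $G_1$ that defines $\mathbb{E}[I]$ through~\eqref{eqn:E_I_large_correl_beta=1}--\eqref{eqn:G_large_correl_beta=1}), while the ``connected'' piece becomes $\chi(z,r)$. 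Passing to centre-of-mass/relative coordinates $r'=(x_1+x_2)/2$, $\sigma=x_1-x_2$ diagonalises $G_2$ into two Gaussian factors whose covariances are proportional to $z^3(-\Xi)$, and a further rescaling $\sigma=\theta\sigma'$ (contributing $\theta^d$ from the Jacobian) yields the claimed expression with prefactor $(12\theta/z^3)^d/|\Xi|$. The degenerate regimes then fall out immediately: for $\beta>1$ one uses $\mathbb{E}[I^p]=p!^2\Gamma(r,0)^p$ directly to read off $\chi=\mathbb{E}[I]^2$; for $\theta\to 0$ the prefactor $(12\theta/z^3)^d$ vanishes while $\int \Gamma(r',\sigma')^2\, dr'\, d\sigma'$ stays bounded, so $\chi\to 0$ by dominated convergence.

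The upper bound $\sfS_\sfT\le 3$ then follows by combining $F_\sfT\le 1$ with $\chi\le \mathbb{E}[I]^2$; the latter is an equality for $\beta>1$, and for $\beta=1$ I would deduce it from the pointwise bound $\Gamma(r',\sigma')^2\le \Gamma(r',0)^2$ (inherited from $\Gamma(r',\cdot)$ being a nonnegative correlation kernel maximised at $0$) together with Cauchy--Schwarz on the $r'$-convolution, comparing the $r'$-Gaussian in $\chi$ with the square of the $G_1$-kernel defining $\mathbb{E}[I]$. For the large-$z$ asymptotics~\eqref{eqn:Scint_z_infty}, the exponentials inside $\chi$ tend to $1$ uniformly on compacta while the $(z^3)^{-d}$ prefactor of $\chi$ is exactly matched by the $(z^{3/2})^{-2d}=(z^3)^{-d}$ decay of $\mathbb{E}[I](z,r)^2$ coming from $G_1$ at the origin; the $|\Xi|$ and $(2\pi)^d$ factors cancel in the ratio $\chi/\mathbb{E}[I]^2$, leaving exactly $\theta^d\int\Gamma^2\, dr\, d\sigma/(\int\Gamma(\cdot,0)\, dr)^2$. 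The main technical obstacle I anticipate is the constant-bookkeeping in the CoM/relative computation, namely matching the specific exponents $12/z^3$ and $3\theta^2/z^3$ and the overall prefactor $(12\theta/z^3)^d/|\Xi|$ by carefully unpacking $G_2$ from~\eqref{eqn:high_dim_diff} in the decoupled variables and identifying it with a product of two copies of $G_1$; all other steps are either soft analysis or routine Gaussian scaling.
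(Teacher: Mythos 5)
Most of your outline coincides with the paper's proof: the reduction via Theorem~\ref{thm:time_avg_intens_limit} with $F_1(\sfT)=1$, $F_2(\sfT)=1+F_\sfT$; the split $\tfrac12\mathbb{E}[I^2]=\mathbb{E}[I]^2+\chi$ coming from the two permutations in $S_2^I$; the Gaussian/centre-of-mass computation giving the explicit $\chi$; the $\beta>1$ and $\theta\to0$ specializations; and the large-$z$ limit by dominated convergence. Those parts are fine.

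The genuine gap is in your proof of $\chi(z,r)\le\mathbb{E}[I](z,r)^2$ when $\beta=1$, which is what the bound $\sfS_\sfT\le 3$ rests on. First, the pointwise bound $\Gamma(r',\sigma')^2\le\Gamma(r',0)^2$ is not what positive semidefiniteness of the covariance gives you: Cauchy--Schwarz applied to $J_\theta(x,y)=\mathbb{E}[u_0(x)u_0^*(y)]$ yields $\Gamma(r,\sigma)^2\le\Gamma(r+\tfrac{\theta\sigma}{2},0)\,\Gamma(r-\tfrac{\theta\sigma}{2},0)$, and this can exceed $\Gamma(r,0)^2$ (e.g.\ near a local minimum of $\Gamma(\cdot,0)$). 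Second, even granting that bound, integrating out $\sigma'$ leaves you needing an inequality of the shape $\int \tilde G^2\,\Gamma(\cdot,0)^2\big/\!\int \tilde G^2\le\big(\int \tilde G\,\Gamma(\cdot,0)\big)^2$, which is a Jensen-type inequality in the \emph{wrong} direction; for $\Gamma(\cdot,0)$ sharply peaked the left side scales like $a^d$ and the right like $a^{2d}$ in the peak width $a$, so the chain of estimates genuinely fails. The loss occurs because bounding $\Gamma(\cdot,\sigma')^2$ by a $\sigma'$-independent quantity and then integrating out $\sigma'$ discards exactly the constraint that ties the $\sigma$-decay of $\Gamma$ to its $r$-profile. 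The paper avoids this by \emph{not} passing to the $(r',\sigma')$ form: it writes $\chi(z,r)-\mathbb{E}[I](z,r)^2=\big(G_2(z^3,\cdot,\cdot)\ast S\big)(r,r)$ with $S(r_1,r_2)=J_\theta(r_1,r_2)J_\theta(r_2,r_1)-J_\theta(r_1,r_1)J_\theta(r_2,r_2)=|\mathbb{E}[u_0(r_1)u_0^*(r_2)]|^2-\mathbb{E}[|u_0(r_1)|^2]\mathbb{E}[|u_0(r_2)|^2]\le0$ pointwise by Cauchy--Schwarz, and then uses that the (nonnegative) product heat kernel $G_2$ preserves the sign before restriction to the diagonal. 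You should replace your argument for $\chi\le\mathbb{E}[I]^2$ by this one; the rest of your proposal stands.
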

\begin{theorem}[Stochastic continuity, tightness and convergence of processes]\label{thm:tightness}
    Assume Hypothesis~\ref{hyp:medium} on $R$ along with the assumption that $\langle\xi\rangle^{2}\hat{R}(\xi)\in\sL^1(\mathbb{R}^d)$. Also, assume that the partially coherent source satisfies $\langle\omega\rangle^2\hat{F}(\omega)\in\sL^1(\mathbb{R})$ and
    for every permutation $\pi_n$,
\begin{equation}\label{eqn:J_reg}    
 \int\limits_{\mathbb{R}^{nd}} \prod\limits_{j=1}^n\langle k_j\rangle^{2+\alpha/2}\langle k_{\pi_n(j)}\rangle^{2+\alpha/2}\hat{J}(k_j,k_{\pi_n(j)})\mathrm{d}k_1\ldots\mathrm{d}k_n\le C(n,\alpha)\,
\end{equation}
for some $\alpha>\frac{d-1}{2}$. Then
    \begin{equation}\label{eqn:tightness_crit}
     \sup_{s\in[0,z]}\mathbb{E}|\phi^\eps(s,r,x+h;t+\Delta t)-\phi^\eps(s,r,x;t)|^{2n}\le C(z,n)(|h|^{2\alpha_0 n}+|\Delta t|^{2n}),\quad h\in \sfB(0,1)\subset\mathbb{R}^d, |\Delta t|<1\,
\end{equation}
for $0<\alpha_0\le 1$ with $\alpha_0<1$ in the diffusive regime uniformly in $\eps$. This tightness criterion, along with the convergence of finite dimensional distributions as above shows that for fixed $r$ and $z>0$ each such processes converge in distribution as probability measures on $C^0(\mathbb{R}^d\times\mathbb{R})$. 
\end{theorem}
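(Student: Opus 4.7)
The plan is to apply the Kolmogorov--Chentsov continuity criterion on $\mathbb{R}^d\times\mathbb{R}$: once \eqref{eqn:tightness_crit} is established with exponents $2\alpha_0 n$ in space and $2n$ in time, choosing $n$ large enough so that $2\alpha_0 n>d+1$ and $2n>d+1$ gives tightness of $\phi^\eps$ in $C^0$ on compacts, and combining with the finite-dimensional convergence already proved in Theorems~\ref{thm:large_corr_kinetic}--\ref{thm:large_corr_diffusive} yields convergence in distribution on $C^0(\mathbb{R}^d\times\mathbb{R})$.

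To prove the moment bound, I would expand the $2n$-th absolute-value power into a sum of $\binom{2n}{n}$ products of $n$ factors of $\phi^\eps-\phi^\eps$ and $n$ factors of $\overline{\phi^\eps-\phi^\eps}$, then expand each difference as a signed sum. Each resulting term is a mixed moment $\mu^\eps_{n,n}$ of $u^\eps$ evaluated at spatial points in $\{\eps^{-\beta}r+\eta x,\,\eps^{-\beta}r+\eta(x+h)\}$ and times in $\{t,t+\Delta t\}$. Passing to Fourier variables $v=(\xi_1,\ldots,\zeta_n)$ and to Fourier transform in time for the source, the decomposition \eqref{eqn:mu_pq_Fourier_dec} writes each such moment as an oscillatory integral against $\Pi^\eps_{n,n}N^\eps_{n,n}\hat\mu^\eps_{n,n}(0)+\Pi^\eps_{n,n}E^\eps_{n,n}\hat\mu^\eps_{n,n}(0)$. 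Under the alternating sum defining the $2n$-th order difference, the spatial and temporal shifts assemble into a product of $2n$ factors of the form $(e^{i\eta h\cdot\xi_j}-1)$ (or with $\zeta_l$) together with temporal factors coming from $F((t-t')/\tau_s)-F((t+\Delta t-t')/\tau_s)$ type differences. The standard interpolation estimates
\[
|e^{i\eta h\cdot\xi_j}-1|\le 2^{1-\alpha_0}|\eta|^{\alpha_0}|h|^{\alpha_0}|\xi_j|^{\alpha_0},\qquad |e^{i\omega\Delta t}-1|\le |\omega||\Delta t|,
\]
together with the bound $\eta\le 1$, reduce matters to showing that the $2n$-th order moment is bounded when weighted by $\prod_j\langle\xi_j\rangle^{\alpha_0}\langle\zeta_{j}\rangle^{\alpha_0}\langle\omega_j\rangle\langle\omega'_j\rangle$. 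The contribution of the error term $E^\eps_{n,n}$ is of order $\eps^{1/3}$ by \eqref{eqn:mu_pq_Fourier_dec} and is therefore irrelevant.

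The main technical obstacle, and where I expect to spend most of the work, is propagating the weighted integrability through $N^\eps_{n,n}(s)$. The operator $N^\eps_{n,n}$ is constructed iteratively out of first/second moment solution operators and collision-type kernels built from $\hat R$; each Duhamel iteration contributes an integral against $\hat R$, and the assumption $\langle\xi\rangle^2\hat R(\xi)\in\sL^1$ is exactly what absorbs a polynomial weight of order $2$ at each step while preserving integrability. An analogous argument in the time variable uses $\langle\omega\rangle^2\hat F(\omega)\in\sL^1$ to absorb one power of $\langle\omega\rangle$ per time factor. The hypothesis \eqref{eqn:J_reg} guarantees that the initial weighted total variation $\|\langle\xi\rangle^{2+\alpha/2}\langle\zeta\rangle^{2+\alpha/2}\hat\mu^\eps_{n,n}(0;T)\|$ is finite uniformly in $\eps$ for every pairing, which provides the additional $\alpha/2$ of spatial regularity that is then distributed as $\alpha_0$ per factor, requiring $\alpha_0 n<n\alpha/2$ which is consistent with $\alpha>(d-1)/2$ and $\alpha_0 n>d/2$. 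In the kinetic regime $\eta=1$ the phase $\Pi^\eps_{n,n}$ is linear in $v$ and this argument goes through with $\alpha_0$ all the way up to $1$. In the diffusive regime $\Pi^\eps_{n,n}$ carries an additional quadratic phase whose gradient grows linearly in $\xi$; differentiating this phase when we apply the weight $|\xi|^{\alpha_0}$ is what forces $\alpha_0<1$ strictly, but any $\alpha_0\in(0,1)$ is admissible and for large $n$ suffices for Kolmogorov.

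Finally, once \eqref{eqn:tightness_crit} is established, tightness on $C^0(K)$ for each compact $K\subset\mathbb{R}^d\times\mathbb{R}$ follows from the Kolmogorov--Chentsov theorem, tightness on $C^0(\mathbb{R}^d\times\mathbb{R})$ with the topology of locally uniform convergence follows from a diagonal argument, and the identification of the limit as the process with the finite-dimensional distributions given by Theorems~\ref{thm:large_corr_kinetic}--\ref{thm:large_corr_diffusive} is standard since finite-dimensional distributions separate Borel measures on $C^0$.
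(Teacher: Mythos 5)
Your overall plan (reduce \eqref{eqn:tightness_crit} to a $2n$-th moment bound, then Kolmogorov--Chentsov plus convergence of finite-dimensional distributions) is the right frame, and your treatment of the temporal increment --- Fourier transform of the source in time, the covariance $2\pi\delta_0(\omega-\omega')\hat F(\omega)\hat J^\eps$, the bound $|e^{i\omega\Delta t}-1|\le|\omega||\Delta t|$, and $\int\omega^2|\hat F(\omega)|\,\mathrm{d}\omega<\infty$ --- is essentially identical to the paper's. Where you diverge is the spatial increment. The paper does not propagate polynomial weights through $N^\eps_{n,n}$ at all: it conditions on the realization of the (source-independent-of-medium) random field $u_0$, applies the deterministic-source stochastic continuity estimate of \cite{bal2024complex} pathwise, which yields $\mathbb{E}\big[|\phi^\eps(s,r,x+h;t)-\phi^\eps(s,r,x;t)|^{2n}\,\big|\,u_0\big]\le C'_\alpha\|\langle k\rangle^2\hat u_0(\cdot;t)\|^{2n}|h|^{2\alpha_0 n}$, and then only has to bound $\mathbb{E}[\|\langle k\rangle^2\hat u_0\|^{2n}]$ by Cauchy--Schwarz and the Gaussian (Isserlis) moment formula, which is exactly what \eqref{eqn:J_reg} is for. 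This buys a very short proof at the price of quoting the deterministic result as a black box; your route would make the argument self-contained but at the cost of redoing the weighted analysis of the moment hierarchy.

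That said, as written your proposal has a genuine gap precisely at that point: the claim that $\langle\xi\rangle^2\hat R\in\sL^1$ lets one ``absorb a polynomial weight of order $2$ at each step while preserving integrability,'' uniformly in $\eps$ (and uniformly in $\eta$ in the diffusive regime, where the relevant kernels carry $\eta^{-2}$ prefactors compensated only by phase cancellations), is the entire technical content of the deterministic tightness theorem and is asserted rather than proved. Nothing in the present paper, nor in Proposition~\ref{prop:N_approx_phy}, gives you weighted total-variation bounds on $N^\eps_{n,n}$; you would have to reprove them. A second, smaller issue is your accounting of the hypothesis \eqref{eqn:J_reg}: the exponent $\alpha$ there does not supply the spatial H\"older regularity $\alpha_0$ (which comes from the $\langle k\rangle^2$ weight together with the smoothness of $R$, and is capped at $\alpha_0<1$ in the diffusive regime by the deterministic analysis, not by the quadratic phase heuristic you give). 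Rather, the extra $\alpha/2$ per factor is consumed by the Cauchy--Schwarz step $\|\langle k\rangle^2\hat u_0\|^2\le\big(\int\langle k\rangle^{-\alpha}\mathrm{d}k\big)\int\langle k\rangle^{4+\alpha}|\hat u_0|^2\mathrm{d}k$ needed to convert the $\sL^1$-type norm of $\hat u_0$ into the quadratic quantity whose expectation the Gaussian structure and \eqref{eqn:J_reg} control; your inequality ``$\alpha_0 n<n\alpha/2$'' plays no role in the actual mechanism. Finally, note that splitting the increment as $|\phi(x+h;t+\Delta t)-\phi(x+h;t)|+|\phi(x+h;t)-\phi(x;t)|$ before taking $2n$-th powers, as the paper does, avoids the mixed spatial/temporal cross terms you propose to handle by Young's inequality; your handling of those is workable but unnecessary.
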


The proofs of the above theorems and corollaries are presented in Sections \ref{sec:higher_moments} and \ref{sec:limiting_intensity} while the tightness result is presented in an Appendix. These proofs follow from a careful analysis of the limiting moments of the random vector $\Phi^\eps$. The $p+p$th moments of such random vectors do not grow faster than $C^pp!^2$, so that the limiting distributions are characterized by their moments~\textcolor{black}{\cite{billingsley2017probability, durrett2019probability}}. 

Additional smoothness assumptions on the source and medium allow to show the tightness criterion~\eqref{eqn:tightness_crit} and obtain that \textcolor{black}{for $(z,r)$ fixed,} the process $\phi^\eps(z,r)$ in fact converges in distribution as probability measures on $C^0(\mathbb{R}^d\times\mathbb{R})$~\cite{billingsley2017probability, kunita1997stochastic}. \textcolor{black}{The finite dimensional distributions are independent at different values of $r$. This may be verified as in~\cite[Corollary 2.6]{bal2024complex} since it morally corresponds to an infinite distance compared to the effective correlation length $\eta$.} These results generalize those obtained in \cite{bal2024complex} for coherent sources to the setting of partially coherent incident beams.
\section{Interpretation and illustrations of the results}
\label{sec:illustration}
The limiting distribution of wavefields due to partially coherent sources varies as a function of the spatial coherence length and time averaging at the detector. The distinct behaviour in various regimes of overall beam width and coherence lengths as considered here mainly arises from the broadness of these parameters relative to $\frac{1}{\eps}$. In all the cases, the limiting moments in the diffusive regime depend on the medium statistics solely through the Hessian $\Xi$, leading to a slightly simplified model. In particular, the second and fourth moments of the limiting intensity can be described via explicit high dimensional diffusion equations.
   
\paragraph{Broad source with broad correlation length: }
In the case $\beta>1$, the width of the beam is of the order $\frac{1}{\eps^\beta}\gg \frac{1}{\eps}$. The correlation length scales as $\frac{\theta}{\eps^\beta}$ which is still larger than $\frac{1}{\eps}$. In this situation, we should expect the beam to be relatively stable with little mixing between the effects of the random source and the random medium. This is indeed the case, as when $\beta>1$ the $p+p$th moments in Theorems~\ref{thm:large_corr_kinetic} and~\ref{thm:large_corr_diffusive} are clearly separable into a product of terms, one due to the source and another  arising from the medium. Here, they behave like the moments of a product of two independent random variables, $Z_1(r;t)$ and $Z_2(z,x)$, with $Z_1(r;t)=u_0(r;t)$ and $Z_2(z,x)=Z(z,x)+\mathbb{E}[Z_2(z,x)]$. 

In the kinetic regime, $\mathbb{E}[Z_2](z,x)=e^{-\frac{k_0^2R(0)z}{8}}$ and $Z(z,x)$ is a mean zero complex Gaussian with $\mathbb{E}[Z(z,x)Z^\ast(z,y)]=\textcolor{black}{\mathcal{R}_z}(y-x,0)e^{-\frac{k_0^2R(0)z}{4}}$ and $\textcolor{black}{\mathcal{R}_z}$ defined in~\eqref{eqn:cal_R}. 

In the diffusive regime, $\mathbb{E}[Z_2](z,x)=0$ and $Z_2(z,x)$ is simply a mean zero complex Gaussian with covariance $\mathbb{E}[Z_2(z,x)Z_2^\ast(z,y)]=e^{\frac{k_0^2z}{8}(y-x)\Xi (y-x)}$. 

This is also reflected in the distribution of intensity in the diffusive regime, where the limiting intensity~\eqref{eqn:I} behaves like a product of two independent exponential distributions, say $\mathrm{Exp}(1/\Gamma(r,0))$ and $\mathrm{Exp}(1)$. In this regime, the scintillation index is also high, with scintillation approaching 3 as the averaging time of the detector gets smaller, i.e, $F_\sfT$ approaching $1$ for $\sfT\ll \tau_s$.

    \paragraph{Intermediately broad source with intermediate correlation length: }
In the regime $\beta=1$ and $\theta$ of order $O(1)$, both the width of the beam and the coherence are comparable to $\frac{1}{\eps}$. In this case we should expect the beam to fully interact with the fluctuations in the medium. This is reflected in the convolution structure of moments in Theorems~\ref{thm:large_corr_kinetic} and~\ref{thm:large_corr_diffusive}. In the diffusive regime, this convolution appears as a convolution against a diffusion kernel. The limiting scintillation in such an interactive regime is also observed to be greater than one, but still smaller than the previous case as the coefficient of cross correlation $\frac{\chi(z,r)}{\mathbb{E}[I](z,r)^2}$ is strictly less than one in this scaling. 

    \paragraph{Intermediately broad source with small correlation length: }
    When $\beta=1$ and $\theta\ll1$, the overall width of the beam is still comparable to $\frac{1}{\eps}$. However, cross correlations in the beam at an order of $\frac{\theta}{\eps}\ll \frac{1}{\eps}$ do not appear in the limiting distributions. This leads to a considerable reduction in the number of terms cropping up from cross correlations in the limiting moments, reverting back to a complex Gaussian type statistics for the limiting wave field. In particular, the limiting intensity follows an exponential distribution~\eqref{eqn:I}. As seen from~\eqref{eqn:Scint_large_corr}, the reduction in scintillation compared to coherent sources is clearly from an additional time averaging at the detector. The longer is the time averaging, the smaller is the scintillation. This is potentially analogous to the statistical stability achieved through spatial averaging for deterministic sources as well~\cite{bal2024complex}.
    
\begin{remark}
    Even though we have not explicitly considered the case when the overall width of the beam is very large but the correlation length is much smaller than $\frac{1}{\eps}$, we note that the results here will be similar to that of the case $\beta=1, \theta\to 0$, but with second moments as described by the situation when $\beta>1$ in~\eqref{eqn:M_11_kinetic},~\eqref{eqn:M_11_diffusive}. 
\end{remark}

\paragraph{Examples: } As an illustration, we show the limiting behaviour of scintillation in the diffusive regime assuming $\beta=1$.  For simplicity, we take $r=0$ at the center of the beam. From Corollary~\ref{coro:scint_large_correl}, the time averaged scintillation is given by
\begin{equation*}
    S_\sfT(z,0)=F_\sfT+\frac{\chi(z,0)}{\mathbb{E}[I](z,0)^2}(1+F_\sfT)\,.
\end{equation*}
We model the detector averaging function $F$ by
\begin{equation*}
    F(t)=e^{-|t|/\tau_s}\,,
\end{equation*}
so that  
\begin{equation*}
    F_\sfT=\frac{\tau_s}{\sfT}\Big(1-\frac{\tau_s}{2\sfT}(1-e^{-2\sfT/\tau_s})\Big)\,.
\end{equation*}
We consider a spatial covariance of the form
\begin{equation*}
    J_\theta(x,y) = \sff(x)\sff(y)\sfg_\theta(x-y)\,,
\end{equation*}
with $\sff(x)=e^{-\frac{|x|^2}{r_0^2}}$. $\sfg_\theta$ is taken as a Gaussian in the first case and a Bessel function in the second. 
\paragraph{Gaussian correlated beam:} Suppose
\begin{equation*}
\sfg_\theta(x) = e^{-\frac{|x|^2}{2\theta^2r_w^2}}
\end{equation*}
so that
\begin{equation*}
   \frac{\chi(z,0)}{\mathbb{E}[I](z,0)^2}= \Big(\frac{r_s}{r_0}\Big)^d\Big(\frac{|\mathbb{I}-\frac{3\Xi^{-1}r_0^2}{z^3}|}{|\mathbb{I}-\frac{3\Xi^{-1}r_s^2}{z^3}|}\Big)^{1/2}, \quad \frac{1}{r_s^2}=\frac{1}{r_0^2}+\frac{1}{\theta^2r_w^2}\,.
\end{equation*}
In particular, when $\Xi=-\sigma_m^2\mathbb{I}_d$ with $d=2$, we have that
\begin{equation*}
    \frac{\chi(z,0)}{\mathbb{E}[I](z,0)^2} =\frac{1+\frac{\sigma_m^2z^3}{3r_0^2}}{1+\frac{\sigma_m^2z^3}{3r_s^2}}\xrightarrow{z\to\infty}\Big(\frac{r_s}{r_0}\Big)^2=\frac{\theta^2r_w^2}{r_0^2+\theta^2r_w^2}\,.
\end{equation*}
Also, in the limit $r_0\to \infty$, we have
\begin{equation*}
   \frac{\chi(z,0)}{\mathbb{E}[I](z,0)^2}\to \frac{1}{1+\frac{\sigma_m^2z^3}{3\theta^2r_w^2}}\,,
\end{equation*}
which is also in agreement with the results in~\cite{garnier2022scintillation} when the medium is assumed to decorrelate slowly.

\begin{figure}[ht!]
    \centering
    \begin{subfigure}[t]{0.5\textwidth}
    \centering
       \includegraphics[width=8cm]{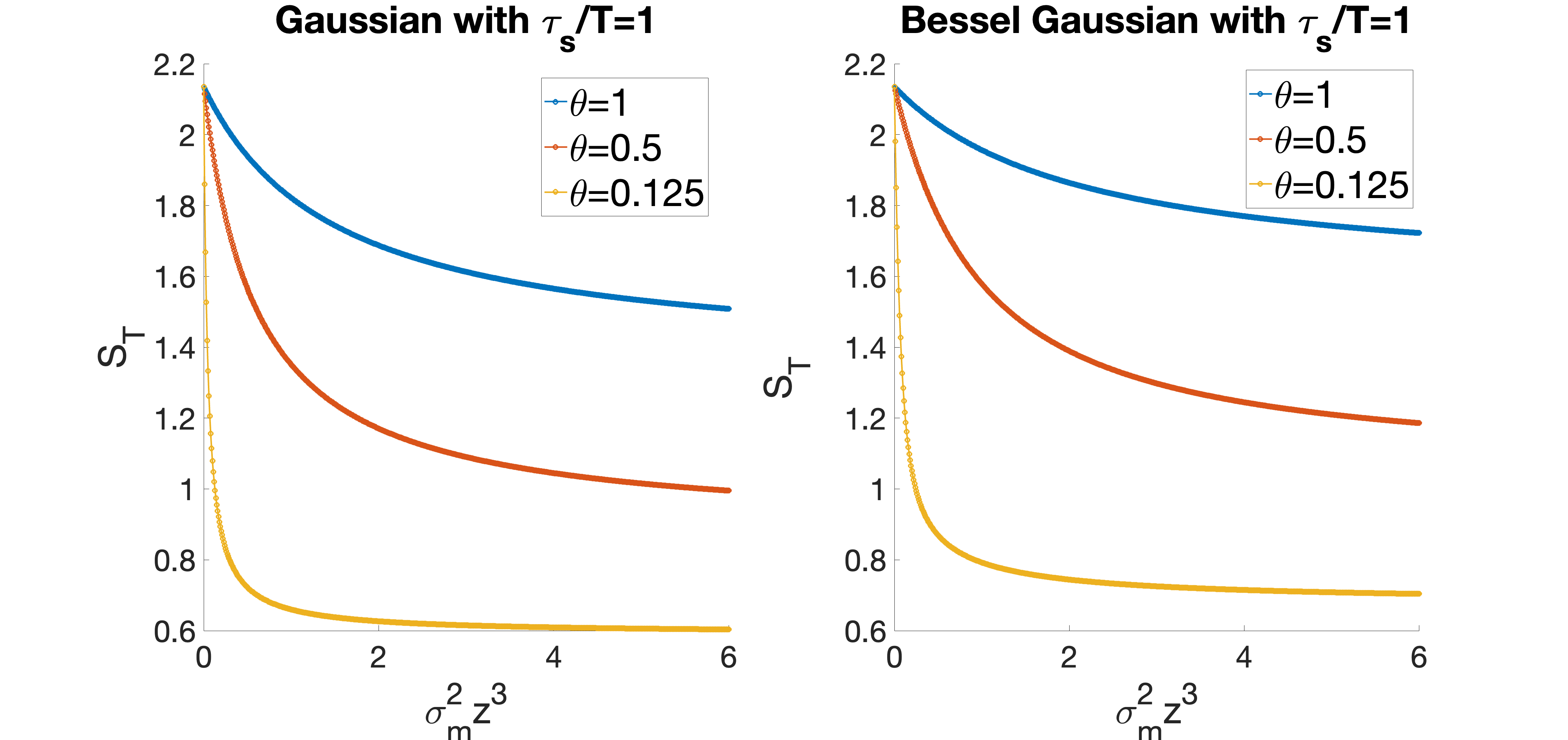}
       \caption{}
    \label{fig:theta_comp}
    \end{subfigure}%
    \begin{subfigure}[t]{0.5\textwidth}
         \centering
       \includegraphics[width=8cm]{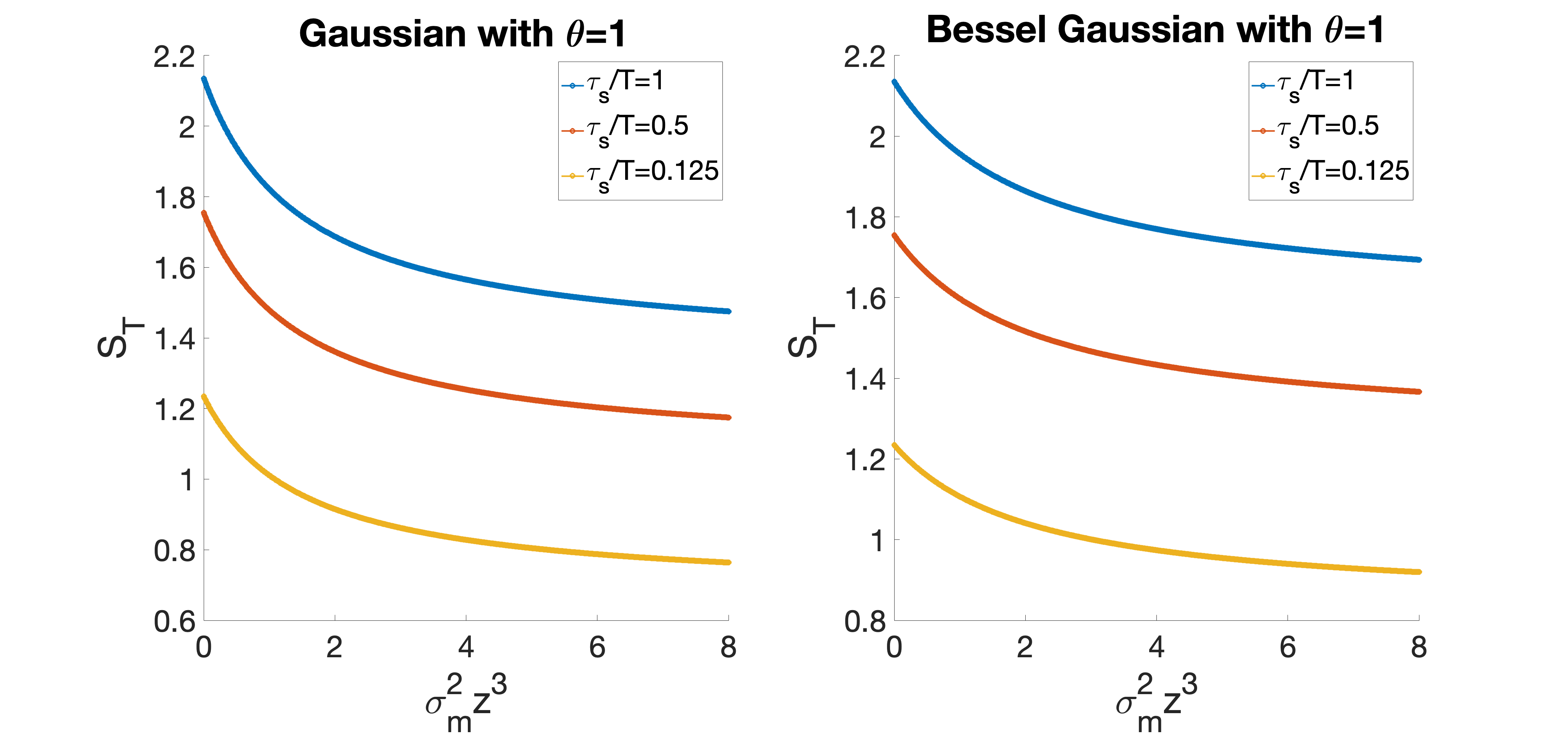}
              \caption{}
    \label{fig:F_T_comp}
    \end{subfigure}
    \caption{Time averaged scintillation $\sfS_\sfT$ as a function of $\sigma_m^2z^3$.  Scintillation saturates to around 1.5, 1 and 0.6 in the first panel and 1.7, 1.2 and 0.7 in the second panel of~\ref{fig:theta_comp}.  In~\ref{fig:F_T_comp}, scintillation saturates to around 1.5, 1.2 and 0.8 in the first panel and 1.7, 1.4 and 1 in the second.} 
        \label{fig:z_plot}
\end{figure}

\begin{figure}[ht!]
    \centering
    \begin{subfigure}[t]{0.5\textwidth}
    \centering
       \includegraphics[width=5cm]{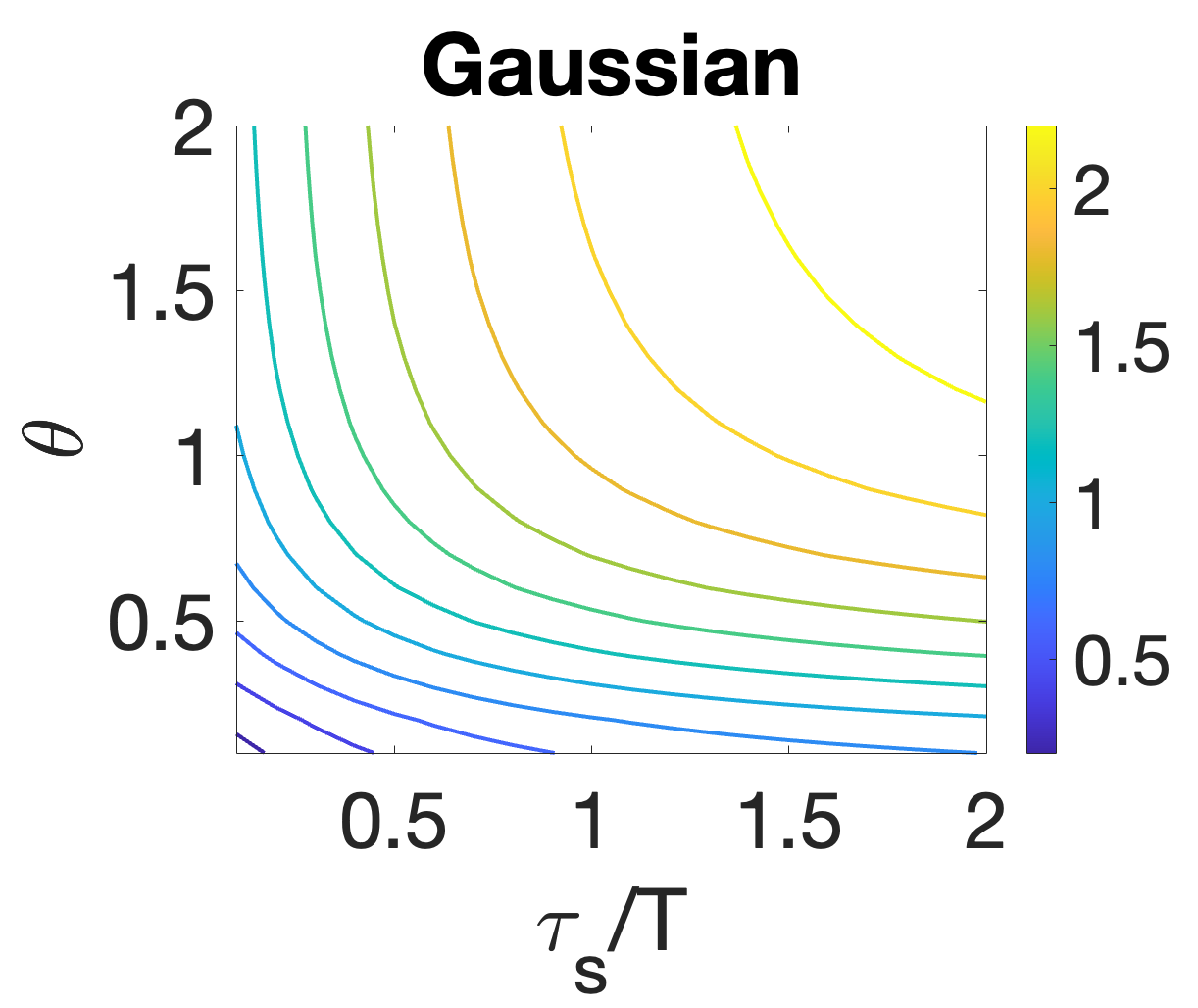}
       \caption{}
    \label{fig:contour_Gaussian}
    \end{subfigure}%
    \begin{subfigure}[t]{0.5\textwidth}
         \centering
       \includegraphics[width=5cm]{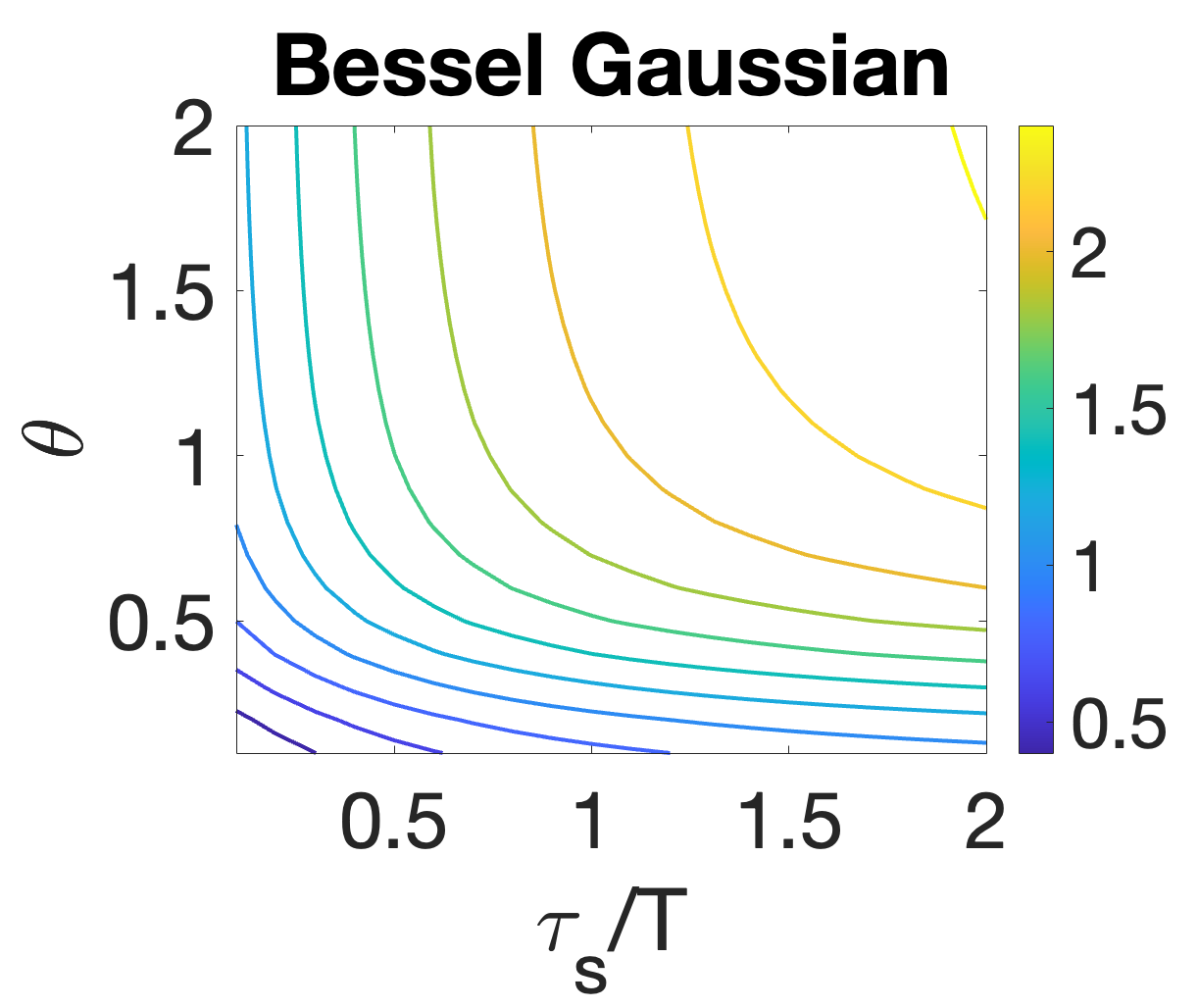}
              \caption{}
    \label{fig:contour_Bessel}
    \end{subfigure}
    \caption{Time averaged scintillation for various $\tau_s/\sfT$ and $\theta$ at a fixed value of $\sigma_m^2z^3=1$.} 
    \label{fig:contour}
\end{figure}
\paragraph{Bessel correlated beam:} Now, suppose
\begin{equation*}
    g_\theta(x)=J_0\Big(\frac{|x|}{\theta r_w}\Big)\,,
\end{equation*}
where $J_0$ denotes the Bessel function of first kind and order 0. Let $\Xi=-\sigma_m^2\mathbb{I}_d$, with $d=2$. In this case both $\chi(z,r)$ and $\mathbb{E}[I](z,r)$ can be evaluated explicitly (see~\cite{gradshteyn2014table}, 6.633). This gives
\begin{equation*}
   \frac{\chi(z,0)}{\mathbb{E}[I](z,0)^2}=I_0\Big(\frac{R(z)^2}{2\theta^2r_w^2}\Big)e^{-\frac{R(z)^2}{2\theta^2r_w^2}},\quad \frac{1}{R(z)^2}=\frac{1}{r_0^2}+\frac{3}{\sigma_m^2z^3}\,,
\end{equation*}
where $I_0$ is the Bessel function of order 0 with imaginary argument. In the limit $z\to \infty$, we have $R(z)^2\to r_0^2$ and
\begin{equation*}
    \lim_{z\to\infty}\frac{\chi(z,0)}{\mathbb{E}[I](z,0)^2}=I_0\Big(\frac{r_0^2}{2\theta^2r_w^2}\Big)e^{-\frac{r_0^2}{2\theta^2r_w^2}}\,.
\end{equation*}

Scintillations computed in these examples also align with the limiting expressions in~\eqref{eqn:Scint_z_infty}. The time averaged scintillation for various configurations of $\tau_s/\sfT$ and $\theta$ are plotted in Fig~\ref{fig:z_plot} and Fig~\ref{fig:contour}. As predicted from the calculations, increasing time averaging $\tau_s/\sfT\to0$ and reducing coherence length $\theta\to0$ both clearly favour a reduction in scintillation.The rest of the paper is devoted to the proof of our main results. 

\section{Two-point statistics of the wave field}
\label{sec:second_moments}
Due to the mean zero property of the source, the first moment of the wave field $\mathbb{E}[u^\eps(z,x;t)]$ is trivially zero. Define the second moment $\mu_{1,1}^\eps(z,x,y;t,t')=\mathbb{E}[u^\eps(z,x;t)u^{\eps *}(z,y;t')]$ with a similar definition for $m^\eps_{1,1}$, the second moment of $\phi^\eps$. Then from It\^o's stochastic calculus~\cite{miyahara1982stochastic}, $\mu_{1,1}^\eps$ solves the PDE
\begin{eqnarray}
    \partial_z\mu_{1,1}^\eps=\frac{i\eta}{2k_0\eps}(\Delta_x-\Delta_y)\mu^\eps_{1,1}+\frac{k_0^2}{4\eta^2}[R(x-y)-R(0)]\mu^\eps_{1,1},\quad\mu^\eps_{1,1}(0,x,y;t,t')=F(t-t')J^\eps(x,y)\,.
\end{eqnarray}
\textcolor{black}{We recall that in order to simplify notation, we systematically replaced $F(\frac t{\tau_s})$ by $F(t)$.} \textcolor{black}{The equations for $\mu^\eps_{1,1}$ and $m^\eps_{1,1}$} can be solved explicitly~\cite{garnier2016fourth, bal2024complex} to give
\begin{equation*}
\begin{aligned}
    m_{1,1}^\eps(z,r,x,y;t,t')&=F(t-t')\int\limits_{\mathbb{R}^{2d}}\hat{\Gamma}^\eps(\zeta,\xi)e^{i\eps^{-\beta}r\cdot\zeta}e^{i\eta\xi\cdot(x-y)}e^{i\eta\zeta\cdot\frac{(x+y)}{2}}e^{-\frac{iz\eta}{k_0\eps}\xi\cdot\zeta}\mQ\big(\eta(y-x),\frac{\eta\zeta}{\eps}\big)\frac{\mathrm{d}\xi\mathrm{d}\zeta}{(2\pi)^{2d}}\,,
\end{aligned}
\end{equation*}
where we have defined:
\begin{eqnarray}\label{eqn:Q}
   \textcolor{black}{\mathcal{Q}_z}(\tau,\tau')=e^{\frac{k_0^2z}{4\eta^2}\int\limits_{0}^1Q\big(\tau+\frac{\tau'sz}{k_0}\big)\mathrm{d}s}, \quad  Q(x)=R(x)-R(0)\,.
\end{eqnarray}
From a change of variables $\xi\to\frac{\eps^\beta}{\theta}\xi, \zeta\to\eps^\beta\zeta$, we have
\begin{equation*}
\begin{aligned}
    m_{1,1}^\eps(z,r,x,y;t,t')&=F(t-t')\!\! \int\limits_{\mathbb{R}^{2d}}\!\! \hat{\Gamma}(\zeta,\xi)e^{ir\cdot\zeta}e^{i\eta\eps^\beta\xi\cdot\frac{(x-y)}{\theta}}e^{i\eta\eps^\beta\zeta\cdot\frac{(x+y)}{2}}e^{-\frac{iz\eta\eps^{2\beta-1}}{k_0\theta}\xi\cdot\zeta}\mQ\big(\eta(y-x),\eta\eps^{\beta-1}\zeta\big)\frac{\mathrm{d}\xi\mathrm{d}\zeta}{(2\pi)^{2d}}.
\end{aligned}
\end{equation*}
Note that in the kinetic regime, $\eta=1$ and we have
\begin{equation}
    \lim_{\eps\to 0}\mQ\big(\eta(y-x),\eta\eps^{\beta-1}\zeta\big)=\begin{cases}
        \mQ(y-x,0),\quad \beta>1\\
        \mQ(y-x,\zeta),\quad\beta=1\,.
    \end{cases}
\end{equation}
Recall that $\lim_{\eps\to 0}m_{1,1}^\eps(z,r,x,y;t,t')=M_{1,1}(z,r,x,y;t,t')$. From an application of the dominated Lebesgue theorem, we have as in~\cite{bal2024complex}
\begin{equation}\label{eqn:M_11_kinetic}
    M_{1,1}(z,r,x,y;t,t')=\begin{cases}
       F(t-t') \Gamma(r,0)\mQ(y-x,0),\quad\beta>1\\
       F(t-t') \int\limits_{\mathbb{R}^{2d}}\Gamma(r',0)\mQ(y-x,\zeta)e^{i\zeta\cdot(r-r')}\frac{\mathrm{d}\zeta\mathrm{d}r'}{(2\pi)^d},\quad\beta=1\,.
    \end{cases}
\end{equation}

Similarly, in the diffusive regime $\eta\ll 1$,
\begin{eqnarray}
   \lim_{\eps\to 0}\mQ\big(\eta(y-x),\eta\eps^{\beta-1}\zeta\big)=
   \begin{cases}
   \exp\Big(\frac{k_0^2z}{8}(y-x)^\tps\Xi(y-x)\Big),\quad \beta>1\\
       \exp\Big(\frac{k_0^2z}{8}\int\limits_0^1\big(y-x+\frac{\zeta sz}{k_0}\big)^\tps\Xi\big(y-x+\frac{\zeta sz}{k_0}\big)\mathrm{d}s\Big),\quad \beta=1\,,
   \end{cases}
\end{eqnarray}
where $\Xi=\nabla^2Q(0)$. This implies that $M_{1,1}(z,r,x,y;t,t')$ is given by:
\begin{eqnarray}\label{eqn:M_11_diffusive}
    \begin{cases}
       F(t-t') \Gamma(r,0)\exp\Big(\frac{k_0^2z}{8}(y-x)^\tps\Xi(y-x)\Big),\quad\beta>1\\
          F(t-t')\int\limits_{\mathbb{R}^{2d}}\Gamma(r',0)e^{i\zeta\cdot(r-r')}\exp\Big(\frac{k_0^2z}{8}\int\limits_0^1\big(y-x+\frac{\zeta sz}{k_0}\big)^\tps\Xi\big(y-x+\frac{\zeta sz}{k_0}\big)\mathrm{d}s\Big)\frac{\mathrm{d}\zeta\mathrm{d}r'}{(2\pi)^{d}},\quad\beta=1\,.
    \end{cases}
\end{eqnarray}
From~\eqref{eqn:M_11_diffusive}, the average limiting intensity $\mathbb{E}[I](z,r)= M_{1,1}(z,r,x,x;t,t)$ in the diffusive regime is given by:
\begin{equation}\label{eqn:E_I}
     \mathbb{E}[I](z,r)=\begin{cases}
         \Gamma(r,0),\quad \beta>1\\
         \int\limits_{\mathbb{R}^{2d}}e^{i\zeta\cdot (r-r')}e^{\frac{z^3}{24}\zeta^\tps\Xi\zeta}{\Gamma}(r',0)\frac{\mathrm{d}\zeta\mathrm{d}r'}{(2\pi)^{d}},\quad\beta=1\,.
     \end{cases}
\end{equation}
In particular when $\beta=1$, $\mathbb{E}[I](z,r)$ follows a diffusion equation given by
\begin{equation}\label{eqn:E_I_large_correl_beta=1}
    \mathbb{E}[I](z,r)=[G(z^3,\cdot)*\Gamma(\cdot,0)](r)\,,
\end{equation}
where $G(t,r)$ is the Green's function to the diffusion equation
\begin{eqnarray}\label{eqn:G_large_correl_beta=1}
    \partial_tG+\frac{1}{24}\nabla_r\cdot(\Xi\nabla_rG)=0,\quad G(0,r)=\delta_0(r)\,.
\end{eqnarray}
Note that in all the cases above, the limiting second moment depends on the source only through $\Gamma(r,0)$, and is therefore independent of $\theta$.

\section{Higher moments of the wave field}
\label{sec:higher_moments}
For $p,q\ge 1$, the $p+q$th moment $\mu_{p,q}^{\eps}(z,X,Y;T)$ of $u^\eps$ is given in \eqref{eq:muepspq} for $(X,Y)=(x_1,\cdots,x_p,y_1,\cdots,y_q)$ and $T=(t_1,\cdots,t_p,t'_1,\cdots,t'_q)$. From  It\^{o}'s stochastic calculus, we obtain~\cite{miyahara1982stochastic, garnier2016fourth}
\begin{equation}\label{eqn:mu_pq_PDE}
\partial_z\mu^\eps_{p,q}=\frac{i\eta}{2k_0\eps}(\sum\limits_{j=1}^p\Delta_{x_j}-\sum\limits_{l=1}^q\Delta_{y_l})\mu^\eps_{p,q}+\frac{k_0^2}{4\eta^2}\mathcal{U}_{p,q}\mu^\eps_{p,q}\,,
\end{equation}
where the generalized potential $\mathcal{U}_{p,q}$ is given by
\begin{equation*}
\mathcal{U}_{p,q}(X,Y)=\sum\limits_{j=1}^p\sum\limits_{l=1}^qR(x_j-y_l)-\sum\limits_{1\le j<j'\le p}R(x_j-x_{j'})-\sum\limits_{1\le l<l'\le q}R(y_l-y_{l'})-\frac{p+q}{2}R(0).
\end{equation*}
The source to~\eqref{eqn:mu_pq_PDE} is given by
\begin{eqnarray}\label{eeqn:mu_pq_source}
    \begin{aligned}
        \mu^\eps_{p,q}(z=0,X,Y;T)=\begin{cases}
            0,\quad p\neq q\\
            \sum\limits_{\pi_p}\prod\limits_{j=1}^pF(t_j-t'_{\pi_p(j)})J^\eps(x_j,y_{\pi_p(j)}),\quad p=q\,.
        \end{cases}
    \end{aligned}
\end{eqnarray}

Following the framework in~\cite{bal2024complex}, the analysis of the moments is primarily done in the Fourier domain in terms of operators acting on $\mathcal{M}_B(\mathbb{R}^{(p+q)d})$. We briefly recall this here. Define the Fourier transformed moments
\begin{equation*}
    \hat{\mu}^\eps_{p,q}(z,v;T)=\int\limits_{\mathbb{R}^{(p+q)d}}\mu^\eps_{p,q}(z,X,Y;T)e^{-i(\sum_{j=1}^p\xi_j\cdot x_j-\sum_{l=1}^q\zeta_l\cdot y_l)}\mathrm{d}x_1\cdots\mathrm{d}x_p\mathrm{d}y_1\cdots\mathrm{d}y_q\,,
\end{equation*}
where $v=(\xi_1,\cdots,\xi_p,\zeta_1,\cdots,\zeta_q)$ denotes the vector of variables dual to $(X,Y)$. Then $\hat{\mu}_{p,q}^\eps$ satisfies
\begin{equation}\label{eq:mupqF}   
\begin{aligned}
        {\partial_z\hat{\mu}^\eps_{p,q}}&=-\frac{i\eta}{2k_0\eps}\Big(\sum_{j=1}^p|\xi_j|^2-\sum_{l=1}^q|\zeta_l|^2\Big)\hat{\mu}^\eps_{p,q}+\frac{k_0^2}{4\eta^2}\int_{\Rm^d}\hat{R}(k)\Big[\sum_{j=1}^p\sum_{l=1}^q\hat{\mu}^\eps_{p,q}(\xi_j-k,\zeta_l-k)\\
    & \ \ \ \ -\sum_{1\le j< j'\le p} \!\!\! \hat{\mu}^\eps_{p,q}(\xi_j-k,\xi_{j'}+k)-\sum_{1\le l< l'\le q}\hat{\mu}^\eps_{p,q}(\zeta_l-k,\zeta_{l'}+k)-\frac{p+q}{2}\hat{\mu}^\eps_{p,q}\Big] \dfrac{\mathrm{d}k}{(2\pi)^d}\,,
    \end{aligned}
\end{equation}
with initial condition
\begin{eqnarray}\label{eqn:mu_pqF_source}
    \begin{aligned}
        \hat{\mu}^\eps_{p,q}(z=0,v;T)=\begin{cases}
            0,\quad p\neq q\\
            \sum\limits_{\pi_p}\prod\limits_{j=1}^pF(t_j-t'_{\pi_p(j)})\hat{J}^\eps(\xi_j,\zeta_{\pi_p(j)}),\quad p=q\,.
        \end{cases}
    \end{aligned}
\end{eqnarray} 
As in~\cite{bal2024complex} we define $\Theta$ as the $d(p+q)\times d(p+q)$ block diagonal matrix consisting of $d\times d$ blocks of $p$ ones followed by $q$ negative ones. $A_{j,l}$ is defined as a $d(p+q)\times d$ matrix of $d\times d$ blocks with blocks $j$ and $p+l$ as the $d\times d$ identity matrix $\mathbb{I}_d$ and the rest vanishing.  Next, we define the phase compensated moments $\psi_{p,q}^\eps(z,v;T)$ such that
\begin{equation}\label{eqn:pi_pq}
    \hat{\mu}^\eps_{p,q}(z,v;T)= \Pi_{p,q}^\eps(z,v)\psi_{p,q}^\eps(z,v;T),\, \qquad \Pi_{p,q}^\eps(z,v):=e^{-\frac{iz\eta}{2k_0\eps}(\sum_{j=1}^p|\xi_j|^2-\sum_{l=1}^q|\zeta_l|^2)}=e^{-\frac{iz\eta v^\tps\Theta v}{2k_0\eps}}\,.
\end{equation}
Under this formulation, $\psi_{p,q}^\eps$ follows the evolution equation
\begin{equation}\label{eqn:psi_evolution}
   \partial_z\psi_{p,q}^\eps= L^\eps_{p,q}\psi_{p,q}^\eps,\qquad \psi_{p,q}^\eps(0,v;T)=\hat{\mu}^\eps_{p,q}(0,v;T)\,,
\end{equation}
where the operator $L^\eps_{p,q}$ is a sum of bounded integral operators on $\mathcal{M}_B(\mathbb{R}^{(p+q)d})$ described in Appendix~\ref{App:operator}.

Throughout, we denote by $\gamma=(\gamma_1,\gamma_2)$ a pairing $(j,l)$ and $\Lambda_\kappa$ denotes a set of $m(\kappa)$ pairings such that if $\gamma,\gamma'\in\Lambda_\kappa$, $\gamma_{1,2}\neq\gamma_{1,2}'$ with $\sum\limits_{\kappa}m(\kappa)=K=\sum\limits_{m=1}^{p\wedge q}\binom{p}{m}\binom{q}{m}m!$. Also, let         
\begin{equation}\label{eqn:cal_R}
            \textcolor{black}{\mathcal{R}_z}(\tau,\tau')=\exp \Big({\frac{k_0^2z}{4\eta^2}\int\limits_{0}^1R\big(\tau+\frac{\tau'sz}{k_0}\big)\mathrm{d}s} \Big).
        \end{equation}
First we verify the following:
\begin{lemma}\label{lemma:U_gamma_operator}
  For any given pairing $\gamma=(j,l)$, let $U^{\eps}_\gamma$ be the solution operator to
  \begin{equation}
\partial_zU^{\eps}_\gamma=L^1_{j,l}U^{\eps}_\gamma,\quad U^{\eps}_\gamma(0)=\mathbb{I}\,,
\end{equation}
where
\begin{equation*}
        {L}^{1}_{j,l}\rho(v)=\frac{k_0^2}{4\eta^2}\int\limits_{\Rm^d}\hat{R}(k)\rho(v-A_{j,l}k)e^{\frac{iz\eta}{k_0\eps}k^\tps A_{j,l}^\tps\Theta v}\frac{\mathrm{d}k}{(2\pi)^d}\,.
\end{equation*}
Then we have that
\begin{equation}\label{eqn:U_0U_gamma_expn}
U^{\eps}_{j,l}\rho(z,v)=\int\limits_{\mathbb{R}^{2d}}e^{-iw\cdot k}\textcolor{black}{\mathcal{R}_z}\big(w,\frac{\eta}{\eps}A_{j,l}^\tps\Theta v\big)\rho(0,v-A_{j,l}k)\frac{\mathrm{d}k\mathrm{d}w}{(2\pi)^d}\,.
\end{equation}
\begin{proof}
For $\rho_0\in\mathcal{M}_B(\mathbb{R}^{(p+q)d})$, let $\rho(z,v)=[U^\eps_\gamma(z)\rho_0](v)$. Now define the coordinate transformation $\xi_j'=\frac{\xi_j+\zeta_l}{2}, \zeta_l'=\xi_j-\zeta_l$. Also let $\rho'(z,\xi_j',\zeta_l')=\rho(z,\xi_j,\zeta_l)$, where we have explicitly indicated only the coordinates that are affected by the transformation. Then $\rho'(z,\xi',\zeta')$ follows the evolution equation
\begin{equation*}
    \begin{aligned}
        \partial_z\rho'=\frac{k_0^2}{4\eta^2}\int\limits_{\mathbb{R}^d}\hat{R}(k)\rho'(z,\xi_j'-k,\zeta_l')e^{\frac{iz\eta}{\eps k_0}k\cdot \zeta'_l}\frac{\mathrm{d}k}{(2\pi)^d},\quad \rho'(0,\xi_j',\zeta_l')=\rho_0\Big(\xi'_j+\frac{\zeta'_l}{2},\xi'_j-\frac{\zeta'_l}{2}\Big)\,.
    \end{aligned}
\end{equation*}
This can be solved explicitly by Fourier transforming with respect to $\xi_j'$. This gives
\begin{equation*}
    \begin{aligned}
        \rho'(z,\xi_j',\zeta_l')&=\int\limits_{\mathbb{R}^{2d}}e^{-iw\cdot(\xi'_j-k)}\rho'(0,k,\zeta'_l)\textcolor{black}{\mathcal{R}_z}\Big(w,\frac{\eta z\zeta'_l}{\eps k_0}\Big)\frac{\mathrm{d}k\mathrm{d}w}{(2\pi)^d}\,.
    \end{aligned}
\end{equation*}
In terms of the original coordinates $\xi_j,\zeta_l$, this translates to
\begin{equation*}
    \begin{aligned}
        \rho(z,\xi_j,\zeta_l)&=\int\limits_{\mathbb{R}^{2d}}e^{-iw\cdot\big(\frac{\xi_j+\zeta_l}{2}-k\big)}\rho_0\Big(k+\frac{\xi_j-\zeta_l}{2},k-\frac{\xi_j-\zeta_l}{2}\Big)\textcolor{black}{\mathcal{R}_z}\Big(w,\frac{\eta z(\xi_j-\zeta_l)}{\eps k_0}\Big)\frac{\mathrm{d}k\mathrm{d}w}{(2\pi)^d}\,.
    \end{aligned}
\end{equation*}
After a change of variables $k-\frac{\xi_j+\zeta_l}{2}\to -k$, we arrive at~\eqref{eqn:U_0U_gamma_expn}.
\end{proof}
\end{lemma}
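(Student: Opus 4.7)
My plan is to exploit the observation that the operator $L^1_{j,l}$ acts nontrivially only on the coordinate pair $(\xi_j, \zeta_l)$: the shift matrix $A_{j,l} k$ touches only these two components, and the phase factor reduces to $k^{\tps} A_{j,l}^{\tps} \Theta v = k \cdot (\xi_j - \zeta_l)$, which again involves only this pair. Thus the remaining coordinates of $v$ can be treated as spectator parameters, and the problem reduces to a two-variable evolution for $\rho$ viewed as a function of $(\xi_j, \zeta_l)$ with parametric dependence on the other entries of $v$.

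I would then introduce the center-of-mass / relative change of variables $\xi_j' = (\xi_j + \zeta_l)/2$ and $\zeta_l' = \xi_j - \zeta_l$. Under this substitution, the shift $v \to v - A_{j,l}k$ becomes $\xi_j' \to \xi_j' - k$ with $\zeta_l'$ unchanged, and the phase simplifies to $k \cdot \zeta_l'$. Writing the PDE for $\rho'(z, \xi_j', \zeta_l') := \rho(z, \xi_j, \zeta_l)$ in the new variables, it becomes a pure convolution-type evolution in $\xi_j'$, with $\zeta_l'$ appearing only as a parameter through an oscillating phase.

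Next I would take the Fourier transform in $\xi_j'$ with dual variable $w$, which converts the convolution to multiplication. Using the shift theorem together with the evenness of $R$ from Hypothesis \ref{hyp:medium}, the PDE reduces to a scalar linear ODE in $z$ of the form $\partial_z \tilde\rho'(z, w, \zeta_l') = \frac{k_0^2}{4\eta^2} R\bigl(w + \tfrac{\eta z \zeta_l'}{k_0 \eps}\bigr) \tilde\rho'$ for each fixed $w, \zeta_l'$. This integrates explicitly, and after rescaling the dummy time variable $s \to z s$ the resulting exponential factor matches exactly the definition of $\mathcal{R}(w, \eta \zeta_l' / \eps)$ in \eqref{eqn:cal_R}. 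Inverting the Fourier transform in $\xi_j'$, returning to the original coordinates $(\xi_j, \zeta_l)$, and finally relabeling the dummy $k$-variable to absorb the center-of-mass shift then produces \eqref{eqn:U_0U_gamma_expn} in the desired form $\rho(0, v - A_{j,l} k)$.

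The main technical obstacle I anticipate is careful sign and convention bookkeeping: reconciling the sign of the argument inside $R$ between the direct ODE integration and the definition of $\mathcal{R}$ requires an appeal to the evenness of $R$, and tracking the factors of $(2\pi)^d$ across the Fourier inversion together with the final translation in the $k$-variable must be done with care to land precisely on the stated kernel $e^{-iw \cdot k}\,\mathcal{R}(w,\eta A_{j,l}^{\tps}\Theta v/\eps)$.
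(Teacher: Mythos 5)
Your proposal is correct and follows essentially the same route as the paper: reduce to the coordinate pair $(\xi_j,\zeta_l)$, pass to the center-of-mass/difference variables $\xi_j'=(\xi_j+\zeta_l)/2$, $\zeta_l'=\xi_j-\zeta_l$ so the shift acts only on $\xi_j'$ and the phase depends only on $\zeta_l'$, Fourier transform in $\xi_j'$ to obtain a scalar ODE whose explicit solution yields $\mathcal{R}$, and then invert and change variables back. The sign and $(2\pi)^d$ bookkeeping you flag (via evenness of $R$) is exactly the only delicate point in the paper's argument as well.
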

Let $U^\eps$ denote the solution operator to~\eqref{eqn:psi_evolution}, i.e,
\begin{equation}\label{eqn:sol_operator_evolution}
     \partial_zU^\eps=L^\eps_{p,q}U^\eps,\qquad U^\eps(0)=\mathbb{I}\,.
 \end{equation}
In general, the integral operators in the expansion of $L^\eps_{p,q}$ do not all commute with each other for different values of $z$. However, the rapid phase oscillations in the integrals leads to several simplifications which was leveraged in~\cite{bal2024complex} to decompose $U^\eps$. We summarize this below. Let $U_\eta(z)=e^{-\frac{k_0^2R(0)z}{4\eta^2}}$. Then the following result holds:
\begin{lemma}\label{lemma:U_pq_decomp}
The solution operator $U^\eps$ to the evolution equation~\eqref{eqn:sol_operator_evolution} admits the decomposition
       \begin{equation}\label{eqn:U=N+E}
          U^\eps=N_{p,q}^{\eps}+E_{p,q}^{\eps}\,,
      \end{equation}
where $N^\eps_{p,q}$ is given by
\begin{equation}\label{eqn:N_def}
          N_{p,q}^{\eps}=U_\eta^{\frac{p+q}{2}}  \sum_{\kappa=1}^K  {\prod_{\gamma\in\Lambda_{\kappa}} (U^{\eps}_\gamma-\mathbb{I})} + U_\eta^{\frac{p+q}{2}}
\end{equation}
     and $E_{p,q}^{\eps}$ is a bounded operator from $\mathcal{M}_B(\mathbb{R}^{(p+q)d})$ to itself such that
     \begin{equation}\label{eqn:E_bound}
       \sup_{0\le z'\le z}\|E_{p,q}^{\eps}(z')\|\le c(p,q,z)\eps^{1/3}.
   \end{equation}
   for $c(p,q,z)$ a constant independent of $\eps$ and $\eta$.
   \end{lemma}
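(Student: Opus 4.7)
The plan is to expand $U^\eps(z)$ as a Dyson series in the operators constituting $L^\eps_{p,q}$ and to identify, among these iterated integrals, a principal part equal to $N^\eps_{p,q}$ and a rapidly vanishing remainder. First, I would split $L^\eps_{p,q}$ into its constituent pieces: the scalar damping $-\frac{k_0^2(p+q)R(0)}{8\eta^2}\mathbb{I}$, the pair operators $L^1_{j,l}$ of Lemma~\ref{lemma:U_gamma_operator}, and the intra-group operators $L^2_{j,j'}$ and $L^3_{l,l'}$ acting respectively among the $\xi$ and the $\zeta$ dual coordinates. The damping term factors out of $U^\eps$ as the explicit scalar prefactor $U_\eta(z)^{(p+q)/2}$ appearing in~\eqref{eqn:N_def}, leaving a reduced propagator $V^\eps$ driven purely by the integral pieces, each carrying a phase of the form $\exp(iz\eta k^\tps A^\tps\Theta v/(k_0\eps))$ that oscillates in $v$ at scale $\eps^{-1}$ in a direction prescribed by its label.

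Writing $V^\eps(z) = \sum_{n\ge 0}\int_{0<z_n<\cdots<z_1<z}\prod_{i=1}^n\tilde{L}^\eps_{p,q}(z_i)\,\mathrm{d}z_i$, each term expands into a sum indexed by sequences of operator labels. I would organize this sum by the coincidence pattern of the labels, calling a product \emph{resonant} when it uses only pair operators $L^1_\gamma$ drawn from a single complete pairing $\Lambda_\kappa$ between $\xi$ and $\zeta$ indices, with each $\gamma\in\Lambda_\kappa$ appearing at least once. Because pair operators for distinct $\gamma\in\Lambda_\kappa$ act on disjoint dual coordinates and therefore commute, the resonant contribution associated with $\Lambda_\kappa$ factors and, through the Dyson series for each $U^\eps_\gamma$ identified in Lemma~\ref{lemma:U_gamma_operator}, sums exactly to $\prod_{\gamma\in\Lambda_\kappa}(U^\eps_\gamma - \mathbb{I})$; subtracting the identity from each factor enforces that every $\gamma\in\Lambda_\kappa$ actually appears. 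Summing over pairing sets $\Lambda_\kappa$ and adding the $n=0$ term $\mathbb{I}$ reproduces $N^\eps_{p,q}$.

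The remainder $E^\eps_{p,q}$ then collects all non-resonant terms: products containing at least one operator $L^2_{j,j'}$ or $L^3_{l,l'}$, or a pair operator $L^1_\gamma$ whose label is incompatible with the pairing structure of the other factors. The main obstacle is the bound~\eqref{eqn:E_bound}, which requires showing that this sum is $O(\eps^{1/3})$ in operator norm, uniformly on compact $z$-intervals and uniformly in $\eta$ throughout both the kinetic ($\eta=1$) and diffusive ($\eta^{-1}=\log\log\eps^{-1}$) regimes. This relies on a non-stationary-phase estimate: for any non-resonant product, the accumulated phase in at least one momentum variable $k_i$ admits no stationary configuration in the dual coordinates, so integration by parts against the phase (after suitable regularization of $\hat R$) produces a power of $\eps$. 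One then truncates the Dyson series at a level $n^\ast$ chosen to balance the truncation tail of order $(Cz)^{n^\ast}/n^\ast!$ against the combined combinatorial growth of label sequences and the oscillatory gain per non-resonant factor; optimizing this trade-off yields the exponent $1/3$ in~\eqref{eqn:E_bound}, with the $\eta^{-2}$ weighting in the diffusive regime absorbed by the slow divergence of $\eta^{-1}$. Since neither the splitting into pairings nor the oscillatory estimates depend on the initial condition, the argument transfers verbatim from the analogous statement for coherent sources developed in~\cite{bal2024complex}.
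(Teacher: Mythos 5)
Your proposal is correct and takes essentially the same route as the paper: the paper does not reprove this lemma but cites \cite[Theorem 4.2]{bal2024complex}, on exactly the grounds you give at the end---the decomposition is a statement about the solution operator acting on $\mathcal{M}_B(\mathbb{R}^{(p+q)d})$ and is independent of the (now random, $T$-dependent) initial data. Your Dyson-series/resonance outline is consistent with the mechanism the paper summarizes (commutativity of the pair operators for $\gamma,\gamma'$ in a common $\Lambda_\kappa$ via $A_\gamma^\tps\Theta A_{\gamma'}=0$, oscillatory smallness of crossing and intra-group terms), with only the minor caveats that the $\Lambda_\kappa$ are partial matchings of every size $m\le p\wedge q$ rather than complete pairings, and that the precise origin of the exponent $1/3$ is asserted rather than derived.
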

The details can be found in \cite[Theorem 4.2]{bal2024complex}. 
Since $T$ is a fixed vector of parameters that do not affect~\eqref{eq:mupqF} and since $\hat{\mu}_{p,q}^\eps(0,v;T)$ is integrable, Lemma~\ref{lemma:U_pq_decomp} allows one to decompose $\hat{\mu}_{p,q}^\eps$ as
\begin{eqnarray}\label{eqn:mu_pqF_decomp}
    \hat{\mu}_{p,q}^\eps(z,v;T)=\Pi_{p,q}^\eps(z,v)[N_{p,q}^\eps(z)+E^\eps_{p,q}(z)]\hat{\mu}_{p,q}^\eps(0,v;T)\,.
\end{eqnarray}
Due to the form of the source~\eqref{eqn:mu_pqF_source}, $\hat{\mu}_{p,q}^\eps$ is equal to zero trivially when $p\neq q$. For $p=q$, mapping this back to the physical domain gives a slight reformulation of \cite[Theorem 4.12]{bal2024complex}. 
\begin{proposition}[Extension to arbitrary integrable sources]\label{prop:N_approx_phy} 
We have
\begin{equation}\label{eqn:mu_phys_decomp_time}
\mu_{p,p}^\eps(z,X,Y;T)=\tilde{N}_{p,p}^{\eps}(z,X,Y;T)+\tilde{E}_{p,p}^{\eps}(z,X,Y;T)\,,
\end{equation}
where
     \begin{equation}\label{eqn:N_phys_time}
        \begin{aligned}
            \tilde{N}_{p,p}^{\eps}(z,X,Y;T)&=U_\eta^p\sum_{\kappa=1}^{K}\int\limits_{\mathbb{R}^{2pd}} e^{iv^\tps\Theta\sfx}
            e^{-\frac{iz\eta}{2k_0\eps}v^\tps\Theta v}\prod\limits_{\gamma\in\Lambda_\kappa}\Big(\textcolor{black}{\mathcal{R}_z}(-A_\gamma^\tps\Theta \sfx,\frac{\eta}{\eps}A_\gamma^\tps\Theta v)-1\Big)\hat{\mu}^\eps_{p,p}(0,v;T)\prod_{j=1}^p\frac{\mathrm{d}\xi_j\mathrm{d}\zeta_j}{(2\pi)^{2d}}\\
            &+U_\eta^p\int\limits_{\mathbb{R}^{2pd}} e^{iv^\tps\Theta\sfx}e^{-\frac{iz\eta}{2k_0\eps}v^\tps\Theta v}\hat{\mu}^\eps_{p,p}(0,v;T)\prod_{j=1}^p\frac{\mathrm{d}\xi_j\mathrm{d}\zeta_j}{(2\pi)^{2d}}\,,
        \end{aligned}
    \end{equation}
   with $\sfx=(X,Y)$ and 
    \begin{equation}\label{eqn:E_phys_time}
 \|\tilde{E}_{p,p}^\eps(z)\|_\infty \leq c(p,z) \eps^{1/3}\, \sup_{0<\eps\leq 1}\|\hat \mu_{p,p}^\eps(0)\|\,,
\end{equation}
for a constant $c(p,z)$ independent of $\eps$. $\mu^\eps_{p,q}=0$ when $p\neq q$.
\begin{proof}
Inverse Fourier transforming~\eqref{eqn:mu_pqF_decomp} gives a similar decomposition 
\begin{equation}
\mu_{p,p}^\eps(z,X,Y;T)=\tilde{N}_{p,p}^{\eps}(z,X,Y;T)+\tilde{E}_{p,p}^{\eps}(z,X,Y;T)\,,
\end{equation}
with
\begin{equation}\label{eqn:N_phys}
        \begin{aligned}
            \tilde{N}_{p,p}^{\eps}(z,X,Y;T)&=U_\eta^p\int\limits_{\mathbb{R}^{2pd}} e^{iv^\tps\Theta\sfx}
            e^{-\frac{iz\eta}{2k_0\eps}v^\tps\Theta v}
            \Big(\sum_{\kappa=1}^{K}\prod\limits_{\gamma\in\Lambda_\kappa}(U_{\gamma}^{\eps}-\mathbb{I})+\mathbb{I}\Big)\hat{\mu}^\eps_{p,p}(0,v;T)\prod_{j=1}^p\frac{\mathrm{d}\xi_j\mathrm{d}\zeta_j}{(2\pi)^{2d}}\,,
        \end{aligned}
    \end{equation}   
where $\sfx=(X,Y)$ and $\tilde{E}^\eps_{p,p}(z,X,Y;T)$ defined in a similar manner. The bound as in~\eqref{eqn:E_bound} translates to a uniform bound in $(X,Y)$ and $T$ which gives~\eqref{eqn:E_phys_time}.
Using Lemma~\ref{lemma:U_gamma_operator}, we have
 \begin{equation*}
        \begin{aligned}
            \tilde{N}_{p,p}^{\eps}(z,X,Y;T)=U_\eta^p\int\limits_{\mathbb{R}^{2pd}} & e^{iv^\tps\Theta\sfx} e^{-\frac{iz\eta}{2k_0\eps}v^\tps\Theta v} \Big(\sum_{\kappa=1}^{K}\int\limits_{\mathbb{R}^{2m(\kappa)d}}e^{-i\sum_{\gamma\in\Lambda_\kappa}w_{\gamma_1}\cdot k_{\gamma_2}}\hat{\mu}^\eps_{p,p}(0,v-\sum_{\gamma\in\Lambda_\kappa}A_\gamma k_{\gamma_2};T)\\
            \times&\prod_{\gamma\in\Lambda_\kappa}\big(\textcolor{black}{\mathcal{R}_z}(w_{\gamma_1},\frac{\eta}{\eps}A_\gamma^\tps\Theta v)-1\big)\frac{\mathrm{d}w_{\gamma_1}\mathrm{d}k_{\gamma_2}}{(2\pi)^d}+\hat{\mu}^\eps_{p,p}(0,v;T)\Big)\prod_{j=1}^p\frac{\mathrm{d}\xi_j\mathrm{d}\zeta_j}{(2\pi)^{2d}}\,.
        \end{aligned}
    \end{equation*}
From a change of variables $v-\sum_{\gamma\in\Lambda_\kappa}A_\gamma k_{\gamma_2}\to v$ for a fixed $\kappa$, we have
 \begin{equation*}
        \begin{aligned}
            \tilde{N}_{p,p}^{\eps}(z,X,Y;T)=& U_\eta^p\sum_{\kappa=1}^{K}\int\limits_{\mathbb{R}^{2pd}}\int\limits_{\mathbb{R}^{2m(\kappa)d}} e^{iv^\tps\Theta\sfx}e^{i\sum_{\gamma\in\Lambda_\kappa}k_{\gamma_2}^\tps A_\gamma^\tps\Theta\sfx}
            e^{-\frac{iz\eta}{2k_0\eps}v^\tps\Theta v}e^{-\frac{iz\eta}{k_0\eps}\sum_{\gamma\in\Lambda_\kappa}k_{\gamma_2}^\tps A_\gamma^\tps \Theta v}e^{-i\sum_{\gamma\in\Lambda_\kappa}w_{\gamma_1}\cdot k_{\gamma_2}}\\
            &\qquad \times\prod\limits_{\gamma\in\Lambda_\kappa}\Big(\textcolor{black}{\mathcal{R}_z}(w_{\gamma_1},\frac{\eta}{\eps}A_\gamma^\tps\Theta v)-1\Big)\hat{\mu}^\eps_{p,p}(0,v;T)\prod_{\gamma\in\Lambda_\kappa}\frac{\mathrm{d}w_{\gamma_1}\mathrm{d}k_{\gamma_2}}{(2\pi)^d}\prod_{j=1}^p\frac{\mathrm{d}\xi_j\mathrm{d}\zeta_j}{(2\pi)^{2d}}\\
            &+U_\eta^p\int\limits_{\mathbb{R}^{2pd}} e^{iv^\tps\Theta\sfx}e^{-\frac{iz\eta}{2k_0\eps}v^\tps\Theta v}\hat{\mu}^\eps_{p,p}(0,v;T)\prod_{j=1}^p\frac{\mathrm{d}\xi_j\mathrm{d}\zeta_j}{(2\pi)^{2d}}\,.
        \end{aligned}
    \end{equation*}
Here we have used the fact that $A_\gamma^\top\Theta A_{\gamma'}=0$ for $\gamma,\gamma'\in\Lambda_\kappa$ both when $\gamma=\gamma'$ and $\gamma\neq\gamma'$. Note that $k_{\gamma_2}$ appears only in a complex phase of the form
\begin{equation*}
    k_{\gamma_2}\cdot(A_\gamma^\tps\Theta\sfx -\frac{z\eta}{k_0\eps} A_\gamma^\tps\Theta v-w_{\gamma_1})\,,
\end{equation*}
which upon integration gives a factor of $(2\pi)^d\delta_0(A_\gamma^\tps\Theta\sfx -\frac{z\eta}{k_0\eps} A_\gamma^\tps\Theta v-w_{\gamma_1})$. Repeating this for all $\gamma\in\Lambda_\kappa$ and using 
\begin{equation*}
        \int\limits_0^1R\Big(A_\gamma^\tps\Theta \sfx-\frac{z\eta}{k_0\eps}A_\gamma^\tps\Theta v+\frac{sz\eta}{k_0\eps}A_\gamma^\tps\Theta v\Big)\mathrm{d}s=\int\limits_0^1R\Big(-A_\gamma^\tps\Theta \sfx+\frac{sz\eta}{k_0\eps}A_\gamma^\tps\Theta v\Big)\mathrm{d}s\,
    \end{equation*}
gives~\eqref{eqn:N_phys_time}. 
\end{proof}
    \end{proposition}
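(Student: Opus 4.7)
The statement is essentially an inverse Fourier transform of the operator-level decomposition already established in the Fourier domain (equation \eqref{eqn:mu_pqF_decomp}), combined with the explicit formula for $U^\eps_\gamma$ from Lemma~\ref{lemma:U_gamma_operator}. Since $T$ is a frozen parameter that does not enter the evolution equation \eqref{eq:mupqF} and since $\hat{\mu}_{p,p}^\eps(0,\cdot;T)$ has total variation uniformly bounded in $\eps$ (by $p!\,\|F\|_\infty^p\,\|\hat\Gamma\|^p$), the decomposition \eqref{eqn:U=N+E} applied to the initial data immediately yields $\mu_{p,p}^\eps = \tilde N_{p,p}^\eps + \tilde E_{p,p}^\eps$ upon inverse Fourier transform. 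The error bound \eqref{eqn:E_phys_time} follows directly from the operator bound \eqref{eqn:E_bound}: uniformly in $(X,Y,T)$ one has $|\tilde E_{p,p}^\eps(z,X,Y;T)|\le \|E_{p,p}^\eps(z)\hat\mu_{p,p}^\eps(0;T)\|/(2\pi)^{2pd}\lesssim \eps^{1/3}\sup_\eps\|\hat\mu_{p,p}^\eps(0)\|$.

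The substantive content is computing $\tilde N_{p,p}^\eps$ in closed form. First I would write
\[
\tilde N_{p,p}^\eps(z,X,Y;T)=U_\eta^p\!\!\int_{\mathbb{R}^{2pd}} e^{iv^\tps\Theta\sfx}e^{-\frac{iz\eta}{2k_0\eps}v^\tps\Theta v}\Bigl(\sum_{\kappa=1}^K \prod_{\gamma\in\Lambda_\kappa}(U^\eps_\gamma-\mathbb{I})+\mathbb{I}\Bigr)\hat\mu_{p,p}^\eps(0,v;T)\prod_{j=1}^p\frac{\mathrm{d}\xi_j\mathrm{d}\zeta_j}{(2\pi)^{2d}},
\]
and then substitute the representation \eqref{eqn:U_0U_gamma_expn} of each $U^\eps_\gamma$, producing auxiliary integration variables $(w_{\gamma_1},k_{\gamma_2})$ for each pair $\gamma\in\Lambda_\kappa$, with the $-\mathbb{I}$ absorbed into the factor $(\mathcal{R}(w_{\gamma_1},\tfrac{\eta}{\eps}A_\gamma^\tps\Theta v)-1)$.

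The next step is the shift of variables $v\mapsto v+\sum_{\gamma\in\Lambda_\kappa} A_\gamma k_{\gamma_2}$ for each fixed $\kappa$, which leaves $\hat\mu_{p,p}^\eps(0,v;T)$ clean and pushes the shifts into the two exponentials $e^{iv^\tps\Theta\sfx}$ and $e^{-\frac{iz\eta}{2k_0\eps}v^\tps\Theta v}$. Here the crucial simplification is the orthogonality relation $A_\gamma^\tps\Theta A_{\gamma'}=0$ for all $\gamma,\gamma'\in\Lambda_\kappa$ (because the pairings within one $\Lambda_\kappa$ use disjoint indices and $\Theta$ has opposite signs on the two halves), which guarantees that the quadratic phase in $v$ picks up no quadratic-in-$k$ contribution. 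Consequently each $k_{\gamma_2}$ appears only linearly, in the phase $k_{\gamma_2}\cdot\bigl(A_\gamma^\tps\Theta\sfx-\tfrac{z\eta}{k_0\eps}A_\gamma^\tps\Theta v-w_{\gamma_1}\bigr)$, and integrating against $k_{\gamma_2}$ yields a Dirac mass that fixes $w_{\gamma_1}=A_\gamma^\tps\Theta\sfx-\tfrac{z\eta}{k_0\eps}A_\gamma^\tps\Theta v$.

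Finally, substituting this value of $w_{\gamma_1}$ into the factor $\mathcal{R}(w_{\gamma_1},\tfrac{\eta}{\eps}A_\gamma^\tps\Theta v)$ and performing the change $s\mapsto 1-s$ in the defining integral \eqref{eqn:cal_R} converts $\mathcal{R}(A_\gamma^\tps\Theta\sfx-\tfrac{z\eta}{k_0\eps}A_\gamma^\tps\Theta v,\tfrac{\eta}{\eps}A_\gamma^\tps\Theta v)$ into $\mathcal{R}(-A_\gamma^\tps\Theta\sfx,\tfrac{\eta}{\eps}A_\gamma^\tps\Theta v)$, which is exactly the form appearing in \eqref{eqn:N_phys_time}. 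The identity term $U_\eta^p$ contributes the second line of \eqref{eqn:N_phys_time} directly. The main obstacle is the bookkeeping of the orthogonality property across all admissible pairings and verifying that no cross-terms survive the change of variables; this is entirely algebraic once the structure of $\Theta$ and $A_\gamma$ is unpacked.
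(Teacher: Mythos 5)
Your proposal is correct and follows essentially the same route as the paper: inverse Fourier transform of \eqref{eqn:mu_pqF_decomp}, substitution of the representation \eqref{eqn:U_0U_gamma_expn}, the shift in $v$ justified by $A_\gamma^\tps\Theta A_{\gamma'}=0$, integration in $k_{\gamma_2}$ to produce the Dirac mass fixing $w_{\gamma_1}$, and the final reparametrization of the $s$-integral in \eqref{eqn:cal_R}. The only minor omission is that the last step uses the evenness $R(-x)=R(x)$ (Hypothesis~\ref{hyp:medium}) in addition to $s\mapsto 1-s$ to arrive at $\mathcal{R}(-A_\gamma^\tps\Theta\sfx,\tfrac{\eta}{\eps}A_\gamma^\tps\Theta v)$.
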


\begin{remark}
    Observing that $K=\sum\limits_{m=1}^{p}\binom{p}{m}\binom{p}{m}m!\le p!\sum\limits_{m=1}^{p}\binom{p}{m}\le p!2^p$,
\begin{equation}\label{eqn:N_bound}
    \|N^\eps_{p,p}\|\le 1+2^{2p}p!\le C2^{2p}p!\,.
\end{equation} 
Due to the complex Gaussian nature of the source, $\sup_{T}\|\hat{\mu}^\eps_{p,p}(0,\cdot;T)\|\le C^pp!$ as well. This gives for the $p+p$th moment
\begin{equation*}
    \|N^\eps_{p,p}(z)\hat{\mu}^\eps_{p,p}(0)\|\le C^p(p!)^2\le C^p{p}^{2p}=C^pe^{2p\log(2p)}.
\end{equation*}
We obtain the same bound in the physical domain for $\|\mu^\eps_{p,p}(z)\|_\infty$ uniformly in $(X,Y)$ and $T$, which means we can uniquely characterize limiting distributions through their moments, provided the moments converge~\textcolor{black}{\cite{billingsley2017probability, durrett2019probability}}.
\end{remark}
Note that ${N}_{p,q}^{\eps}$ is computed as products of second moments. In particular, for deterministic sources,
\begin{equation}\label{eqn:coh_source}
  \hat{\mu}_{p,q}^\eps(0,v;T)=  \hat{\mu}^\eps_{p,q}(0,v)=\prod\limits_{j=1}^p\hat{u}_0^\eps(\xi_j)\prod\limits_{l=1}^q{\hat{u}_0^\eps}^\ast(\zeta_l)\,,
\end{equation}
and $\tilde{N}^\eps$ can be written as  
\begin{equation*}
  \begin{aligned}
      \mu_{p,q}^{\eps }(z,X,Y)&=\mathscr{F}(\mu^\eps_{1,0}(z,x_1),\ldots,\mu^\eps_{1,0}(z,x_p),\mu^\eps_{0,1}(z,y_1),\ldots,\mu^\eps_{0,1}(z,y_q),\mu^\eps_{1,1}(z,x_1,y_1),\ldots,\mu^\eps_{1,1}(z,x_p,y_q))\\&
      +\tilde{E}^{\eps}(z,X,Y)\,,
  \end{aligned}
      \end{equation*}
  where $\mu_{1,0}^\eps(z,x)=\mu_{0,1}^{\eps *}(z,x)$ and $\mu_{1,1}^\eps(z,x,y)$ solve the first and second moment equations
  \begin{equation}\label{eq:mu1}
    \partial_z\mu_{1,0}^\eps=\frac{i\eta}{2k_0\eps}\Delta_x\mu_{1,0}^\eps-\frac{k_0^2}{8\eta^2}R(0)\mu_{1,0}^\eps,\quad \mu_{1,0}^\eps(0,x)=u_0^\eps(x)\,,
\end{equation}
and
\begin{equation}\label{eqn:mu_2}
    \partial_z\mu_{1,1}^\eps=\frac{i\eta}{2k_0\eps}(\Delta_x-\Delta_y)\mu_{1,1}^\eps+\frac{k_0^2}{4\eta^2}[R(x-y)-R(0)]\mu_{1,1}^\eps,\quad \mu_{1,1}^\eps(0,x,y)=u_0^\eps(x){u_0^\eps}^\ast(y)\,.
\end{equation}
  $\mathscr{F}$ is a bounded and continuous functional of its $p+q+pq$ arguments given explicitly  by
 \begin{equation}\label{eqn:F_def}
 \begin{aligned}     \mathscr{F}(h_1,\cdots,h_p,h'_1,\cdots,h'_q,g_{1,1},\cdots,g_{p,q})&=\sum_{\kappa=1}^{K}\prod_{\gamma\in\Lambda_{\kappa}}[g_{\gamma}-h_{\gamma_1}h'_{\gamma_2}]\prod_{j\notin\Lambda_{\kappa,1}}h_{j}\prod_{l\notin\Lambda_{\kappa,2}}h'_{l} +\prod_{j=1}^ph_j\prod_{l=1}^qh'_l\,.
 \end{aligned}
     \end{equation}
Here $\Lambda_{\kappa,1}$ denotes the set of all indices in the first position in the pairs in $\Lambda_{\kappa}$ and $\Lambda_{\kappa,2}$ denotes the set of all indices in the second position. \textcolor{black}{We note that the functional $\mathcal{F}$ is a relationship between raw moments. This is consistent with the Gaussian summation rule that centred higher order even moments can be written in terms of products of centred second moments.}

When the source does not admit the decomposition~\eqref{eqn:coh_source}, $\tilde{N}^\eps(z,X,Y)$ is still described using first and second moment type equations, however not with initial conditions as in~\eqref{eq:mu1} and~\eqref{eqn:mu_2}. The correlation length scale of the source significantly affects the limiting behaviour of these moments. 

Performing a change of variables $\eps^{-\beta}v\to v$ in~\eqref{eqn:N_phys_time} and noting that $A_\gamma^\tps\Theta \sfx=x_{\gamma_1}-y_{\gamma_2}$, we have the following:
\begin{proposition}\label{prop:m_pp}
The moments of the random vector $\phi^\eps$ are given by
 \begin{equation}\label{eqn:m_pp_alpha=beta}
        \begin{aligned}
            m_{p,p}^{\eps}(z,r,X,Y;T)&=U_\eta^p\sum_{\kappa=1}^{K}\int\limits_{\mathbb{R}^{2pd}} e^{i\sum_{j=1}^p(\xi_j-\zeta_j)\cdot r}e^{i\eps^\beta \eta v^\tps\Theta\sfx}
            e^{-\frac{iz\eta\eps^{2\beta-1}}{2k_0}v^\tps\Theta v}\\
            &\times\prod\limits_{\gamma\in\Lambda_\kappa}\Big(\textcolor{black}{\mathcal{R}_z}\big(\eta(y_{\gamma_2}-x_{\gamma_1}),\eta\eps^{\beta-1}(\xi_{\gamma_1}-\zeta_{\gamma_2})\big)-1\Big)\hat{\Sp}(v;T)\prod_{j=1}^p\frac{\mathrm{d}\xi_j\mathrm{d}\zeta_j}{(2\pi)^{2d}}\\
            &+U_\eta^p\int\limits_{\mathbb{R}^{2pd}}e^{i\sum_{j=1}^p(\xi_j-\zeta_j)\cdot r}e^{i\eps^\beta\eta v^\tps\Theta\sfx}e^{-\frac{iz\eta\eps^{2\beta-1}}{2k_0}v^\tps\Theta v}\hat{\Sp}(v;T)\prod_{j=1}^p\frac{\mathrm{d}\xi_j\mathrm{d}\zeta_j}{(2\pi)^{2d}}+O(\eps^{\frac{1}{3}})\,,
        \end{aligned}
    \end{equation}
    where $\Sp(X,Y;T)$ is defined in \eqref{eqn:Sp}. Also, $m^\eps_{p,q}=0$ when $p\neq q$.
\end{proposition}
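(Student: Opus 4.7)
\textbf{Proof proposal for Proposition~\ref{prop:m_pp}.} The plan is to invoke Proposition~\ref{prop:N_approx_phy} at the translated arguments prescribed by $\phi^\eps(z,r,x;t)=u^\eps(z,\eps^{-\beta}r+\eta x;t)$ and then rescale the dual variable $v\mapsto\eps^\beta v$ to match the integrand in~\eqref{eqn:m_pp_alpha=beta}. The case $p\neq q$ is immediate: circular Gaussianity of the source gives $\hat\mu^\eps_{p,q}(0,\cdot;T)=0$ by~\eqref{eqn:u0_complex_Gaussian}, and linearity of~\eqref{eqn:mu_pq_PDE} then forces $\mu^\eps_{p,q}\equiv 0$, hence $m^\eps_{p,q}\equiv 0$.

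For $p=q$ write $m^\eps_{p,p}(z,r,X,Y;T)=\mu^\eps_{p,p}(z,\tilde X,\tilde Y;T)$ with $\tilde x_j=\eps^{-\beta}r+\eta x_j$ and $\tilde y_l=\eps^{-\beta}r+\eta y_l$, and substitute the translated vector $(\tilde X,\tilde Y)$ into the formula~\eqref{eqn:N_phys_time}. Since $p=q$, one has $v^\tps\Theta(\tilde X,\tilde Y)=\eps^{-\beta}r\cdot\sum_{j=1}^p(\xi_j-\zeta_j)+\eta v^\tps\Theta\sfx$ with $\sfx=(X,Y)$, and $A_\gamma^\tps\Theta(\tilde X,\tilde Y)=\eta(x_{\gamma_1}-y_{\gamma_2})$, so the first argument of $\mathcal{R}$ in~\eqref{eqn:N_phys_time} becomes $\eta(y_{\gamma_2}-x_{\gamma_1})$, matching the statement.

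The decisive step is the rescaling $v\mapsto\eps^\beta v$. Its Jacobian $\eps^{2\beta pd}$ will be precisely compensated by the $\eps^{-2\beta pd}$ coming out of the Fourier-rescaled source. Under this substitution the $\eps^{-\beta}$ translation phase becomes $e^{ir\cdot\sum_j(\xi_j-\zeta_j)}$, the quadratic phase picks up $\eps^{2\beta-1}$, and the second argument of $\mathcal{R}$ turns into $\eta\eps^{\beta-1}(\xi_{\gamma_1}-\zeta_{\gamma_2})$. For the source, the identity $J^\eps(x,y)=J_\theta(\eps^\beta x,\eps^\beta y)$ implicit in the scaling~\eqref{eqn:Gamma_eps} yields $\hat J^\eps(\xi,\zeta)=\eps^{-2\beta d}\hat J_\theta(\eps^{-\beta}\xi,\eps^{-\beta}\zeta)$; summing over permutations and combining with the $F$ factors as in~\eqref{eqn:u0_complex_Gaussian} gives $\hat\mu^\eps_{p,p}(0,\eps^\beta v;T)=\eps^{-2\beta pd}\hat{\Sp}(v;T)$, where $\hat\Sp$ is the Fourier transform of the object $\Sp$ defined in~\eqref{eqn:Sp}. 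The cancellation is exact and produces the integrand in~\eqref{eqn:m_pp_alpha=beta}.

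Finally, the remainder $\tilde E^\eps_{p,p}$ is controlled directly by~\eqref{eqn:E_phys_time}: its bound $c(p,z)\eps^{1/3}\sup_\eps\|\hat\mu^\eps_{p,p}(0)\|$ is uniform in the spatial arguments and hence unaffected by the translation, and the uniform bound $\sup_\eps\|\hat\mu^\eps_{p,p}(0,\cdot;T)\|\le p!\|F\|_\infty^p\|\hat\Gamma\|^p$ recorded after~\eqref{eqn:u0_complex_Gaussian} yields the stated $O(\eps^{1/3})$ error. The only delicate point is bookkeeping: checking that every factor of $\eps^\beta$ introduced by the rescaling lands in the correct place (translation phase, quadratic phase, and both arguments of $\mathcal{R}$) and that the Fourier Jacobian cancels the source's $\eps^{-2\beta pd}$ exactly.
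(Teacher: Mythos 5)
Your proposal is correct and follows essentially the same route as the paper: apply Proposition~\ref{prop:N_approx_phy} at the shifted points $\tilde x_j=\eps^{-\beta}r+\eta x_j$, $\tilde y_l=\eps^{-\beta}r+\eta y_l$, note $A_\gamma^\tps\Theta\tilde\sfx=\eta(x_{\gamma_1}-y_{\gamma_2})$, and rescale $v\mapsto\eps^\beta v$ so that the Jacobian $\eps^{2\beta pd}$ cancels the $\eps^{-2\beta pd}$ from $\hat J^\eps(\eps^\beta\cdot,\eps^\beta\cdot)=\eps^{-2\beta d}\hat J_\theta(\cdot,\cdot)$, leaving $\hat\Sp(v;T)$. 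Your bookkeeping of the phases, of the two arguments of $\mathcal{R}$, of the uniform $O(\eps^{1/3})$ error via \eqref{eqn:E_phys_time}, and of the vanishing of $m^\eps_{p,q}$ for $p\neq q$ all match the paper's (much terser) argument.
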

 We also have the following corollary:
\begin{corollary}\label{coro:Phi_mom}
    Define the moments of the random vector $\Phi^\eps(z,r,X;T)$ by
    \begin{equation*}
        \mathbf{M}_{p,q}^\eps(z,r,X;T)=\mathbb{E}[\underbrace{\Phi^\eps\otimes\cdots\otimes\Phi^\eps}_{p \text{ terms}}\otimes\underbrace{{\Phi^{\eps\ast}}\otimes\cdots\otimes{\Phi^{\eps\ast}}}_{q \text{ terms}}](z,r,X;T)\,.
    \end{equation*}
    Then the elements of $\mathbf{M}_{p,q}^\eps$ are given by 
    \begin{equation*}
        \mathbb{E}[\prod\limits_{j=1}^p\phi^\eps_{s_j}\prod\limits_{l=1}^q\phi^{\eps *}_{s'_l}]=m^\eps_{p,q}\big(z,r,(x_{s_j},x_{s'_l})_{j,l});(t_{s_j},t_{s'_l})_{j,l})\big)\,,
    \end{equation*}
    where $m_{p,q}^\eps$ is defined in Proposition~\ref{prop:m_pp} and $s_j$ and $s'_l$ are integers drawn from $\{1,\cdots, N\}$ with replacement. 
\end{corollary}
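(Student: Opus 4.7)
The plan is to observe that Corollary~\ref{coro:Phi_mom} is a bookkeeping restatement of Proposition~\ref{prop:m_pp}, once one checks that the moment formula established there applies without modification when the spatial and temporal evaluation points are permitted to repeat.

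First I would unpack the tensor moment. By the definition of $\Phi^\eps(z,r,X;T)=(\phi^\eps(z,r,x_i;t_i))_{1\le i\le N}$ and of the tensor product, each scalar component of $\mathbf{M}_{p,q}^\eps$ is indexed by a pair of multi-indices $(s_1,\ldots,s_p)$ and $(s'_1,\ldots,s'_q)$ in $\{1,\ldots,N\}$ and equals
\begin{equation*}
\mathbb{E}\Bigl[\prod_{j=1}^p \phi^\eps(z,r,x_{s_j};t_{s_j})\,\prod_{l=1}^q \phi^{\eps\ast}(z,r,x_{s'_l};t_{s'_l})\Bigr].
\end{equation*}
Comparing to the definition~\eqref{eqn:m_pp_random_vec}, this expectation is exactly $m^\eps_{p,q}(z,r,X_s,Y_s;T_s)$ with $X_s=(x_{s_1},\ldots,x_{s_p})$, $Y_s=(x_{s'_1},\ldots,x_{s'_q})$, and $T_s$ the corresponding temporal tuple.

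Next I would verify that Proposition~\ref{prop:m_pp} indeed computes $m^\eps_{p,q}$ on such tuples, including the possibility of coincident entries (which necessarily occur whenever $p+q>N$ and more generally when the drawn indices repeat). The derivation of Proposition~\ref{prop:m_pp} proceeds through the PDE~\eqref{eqn:mu_pq_PDE}, its Fourier reformulation~\eqref{eq:mupqF}, and the Fourier-side operator decomposition of Lemma~\ref{lemma:U_pq_decomp}, all of which treat the physical arguments $(X,Y)$ as arbitrary points in $\mathbb{R}^{(p+q)d}$. The source $\hat{\mu}^\eps_{p,p}(0,v;T)$ in~\eqref{eqn:mu_pqF_source} is built as a permutation sum of bounded factors $F(\cdot)\hat{J}^\eps(\cdot,\cdot)$ and is a well-defined element of $\mathcal{M}_B(\mathbb{R}^{2pd})$ for arbitrary tuples $T$; similarly $\Sp(X,Y;T)$ in~\eqref{eqn:Sp} is a sum of products of $F$ and $J_\theta$ evaluated at possibly coincident arguments with no degeneracy. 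The remainder estimate~\eqref{eqn:E_phys_time} is uniform in $(X,Y,T)$. Hence the identity~\eqref{eqn:m_pp_alpha=beta} is valid for any tuple of evaluation points, including those obtained by drawing indices from $\{1,\ldots,N\}$ with replacement.

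There is no real obstacle here: the only point one might worry about is that coincident indices could cause the permutation sum defining $\Sp$ or the Fourier-side source to become singular, but as noted each ingredient is a bounded continuous function of its arguments. Substituting the identified $(X_s,Y_s,T_s)$ into the formula of Proposition~\ref{prop:m_pp} yields the stated componentwise expression for $\mathbf{M}_{p,q}^\eps$, and the vanishing $\mathbf{M}_{p,q}^\eps\equiv 0$ when $p\neq q$ is inherited from Proposition~\ref{prop:m_pp}.
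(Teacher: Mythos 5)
Your proposal is correct and matches the paper's (implicit) reasoning: the corollary is a definitional unpacking of the tensor moment into the scalar moments $m^\eps_{p,q}$ of~\eqref{eqn:m_pp_random_vec}, combined with the observation that Proposition~\ref{prop:m_pp} holds for arbitrary, possibly coincident, evaluation tuples since the remainder bound is uniform in $(X,Y,T)$. The paper treats this as immediate and offers no separate argument, so your careful check of the coincident-index case is, if anything, more explicit than what is written.
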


\subsection{Limiting moments in the kinetic regime}\label{subsec:kinetic_mom}
When $\eta(\eps)=1$, we have that
\begin{equation*}
    \begin{aligned}
    \lim_{\eps\to 0}    \textcolor{black}{\mathcal{R}_z}(\eta(y_{\gamma_2}-x_{\gamma_1}),\eta\eps^{\beta-1}(\xi_{\gamma_1}-\zeta_{\gamma_2}))=\begin{cases}
            \textcolor{black}{\mathcal{R}_z}(y_{\gamma_2}-x_{\gamma_1},0),\quad\beta>1\\
            \textcolor{black}{\mathcal{R}_z}(y_{\gamma_2}-x_{\gamma_1}, \xi_{\gamma_1}-\zeta_{\gamma_2}),\quad\beta=1\,.
        \end{cases}
    \end{aligned}
\end{equation*}
In the case $\beta>1$, from Proposition~\ref{prop:m_pp} and the Lebesgue dominated convergence theorem, we have as $\eps\to0$, 
\begin{equation}
\begin{aligned}
   M_{p,p}(z,r,X,Y;T)&= \Sp(r,\ldots,r;T)e^{-\frac{pk_0^2R(0)z}{4}}\big(1+\sum\limits_{\kappa=1}^K\prod\limits_{\gamma\in\Lambda_\kappa}[\textcolor{black}{\mathcal{R}_z}(y_{\gamma_2}-x_{\gamma_1},0)-1]\big)\\
   &=\Sp(r,\ldots,r;T) \mathscr{F}\Big(M^\infty_{1,0}(z,r,x_j)_{j},M^\infty_{0,1}(z,r,y_l)_{l},M^\infty_{1,1}(z,r,x_j,y_l)_{j,l}\Big)\,,
\end{aligned}
     \end{equation}
where $M_{1,0}^\infty(z,r,x)=M_{0,1}^\infty(z,r,x)=e^{-\frac{k_0^2R(0)z}{8}}$ and $M_{1,1}^\infty$ is given by\begin{equation}\label{eqn:M_11_inf_kinetic}
 M_{1,1}^\infty(z,r,x,y)=\textcolor{black}{\mathcal{Q}_z}(y-x,0)\,.
\end{equation}
for $\mQ$ as in~\eqref{eqn:Q}. 

Similarly when $\beta=1$,
\begin{equation*}
\begin{aligned}
 &M_{p,p}(z,r,X,Y;T)=\Sp(r,\ldots,r;T)e^{-\frac{pk_0^2R(0)z}{4}} \\
       &+e^{-\frac{pk_0^2R(0)z}{4}}\sum\limits_{\kappa=1}^K\int\limits_{\mathbb{R}^{2pd}}e^{i\sum_{j=1}^p(\xi_j-\zeta_j)\cdot r}\prod\limits_{\gamma\in\Lambda_\kappa}[\textcolor{black}{\mathcal{R}_z}(y_{\gamma_2}-x_{\gamma_1},\xi_{\gamma_1}-\zeta_{\gamma_2})-1]\hat{\Sp}(v;T)\prod_{j=1}^p\frac{\mathrm{d}\xi_j\mathrm{d}\zeta_j}{(2\pi)^{2d}}\,. 
\end{aligned}
\end{equation*}
This can be recast as
\begin{equation}\label{eqn:M_pp_beta_1_refo}
\begin{aligned}
& M_{p,p}(z,r,X,Y;T)=\Sp(r,\cdots,r;T)e^{-\frac{pk_0^2R(0)z}{4}} +e^{-\frac{pk_0^2R(0)z}{4}}\sum\limits_{\kappa=1}^K\int\limits_{\mathbb{R}^{2pd}}\Sp(X',Y';T)\prod\limits_{j=1}^p\mathrm{d}x'_j\mathrm{d}y'_j\\
 &\times\int\limits_{\mathbb{R}^{pd}}\prod\limits_{\gamma\in\Lambda_\kappa}\delta_0(x'_{\gamma_1}-y'_{\gamma_2})[\textcolor{black}{\mathcal{R}_z}(y_{\gamma_2}-x_{\gamma_1},\zeta_{\gamma_2})-1]e^{i\zeta_{\gamma_2}\cdot(r-y'_{\gamma_2})}\prod\limits_{j\notin\Lambda_{\kappa,1}}\delta_0(r-x'_j)\prod\limits_{l\notin\Lambda_{\kappa,2}}\delta_0(r-y'_l)\prod_{j=1}^p\frac{\mathrm{d}\zeta_j}{(2\pi)^{d}}\,. 
\end{aligned}
\end{equation}
Define $M_{p,p}^\infty$ as
\begin{equation}\label{eqn:M_pp_inf_kinetic}
\begin{aligned}
   &M_{p,p}^\infty(z,X',Y',X,Y)=\prod_{j=1}^p\delta_0(x'_j)e^{-\frac{k_0^2R(0)z}{8}}\prod_{l=1}^p\delta_0(y'_l)e^{-\frac{k_0^2R(0)z}{8}}\\
    &+\sum\limits_{\kappa=1}^K\prod\limits_{\gamma\in\Lambda_\kappa} \overline{M}^\infty_{1,1}(z,y'_{\gamma_2},x_{\gamma_1},y_{\gamma_2})\delta_0(x'_{\gamma_1}-y'_{\gamma_2})\prod\limits_{j\notin\Lambda_{\kappa,1}}\delta_0(x'_{j})e^{-\frac{k_0^2R(0)z}{8}}\prod\limits_{l\notin\Lambda_{\kappa,2}}\delta_0(y'_{l})e^{-\frac{k_0^2R(0)z}{8}}\,,  
\end{aligned}
   \end{equation}
where $\overline{M}^\infty_{1,1}$ is given by
\begin{eqnarray}
    \overline{M}^\infty_{1,1}(z,r,x,y)=e^{-\frac{k_0^2R(0)z}{4}}\int\limits_{\mathbb{R}^d}[\textcolor{black}{\mathcal{R}_z}(y-x,\zeta)-1]e^{i\zeta\cdot r}\frac{\mathrm{d}\zeta}{(2\pi)^d}\,.
\end{eqnarray}
This allows us to write~\eqref{eqn:M_pp_beta_1_refo} as the convolution
\begin{eqnarray}
    \begin{aligned}
        M_{p,p}(z,r,X,Y;T)&= \Big(M_{p,p}^\infty(z,\cdot,X,Y)\ast\Sp(\cdot;T)\Big)(r,\ldots,r)\,.
    \end{aligned}
\end{eqnarray}
 In the case $\beta=1$ and $\theta\ll 1$, we still have
\begin{equation*}
\begin{aligned}
  M^\theta_{p,p}(z,r,X,Y;T)&:= \lim_{\eps\to 0} m^\eps_{p,p}(z,r,X,Y;T)= e^{-\frac{pk_0^2R(0)z}{4}}\Big( \Sp(r,\ldots,r;T)\\
   &+\sum\limits_{\kappa=1}^K\int\limits_{\mathbb{R}^{2pd}}e^{i\sum_{j=1}^p(\xi_j-\zeta_j)\cdot r}\prod\limits_{\gamma\in\Lambda_\kappa}[\textcolor{black}{\mathcal{R}_z}(y_{\gamma_2}-x_{\gamma_1},\xi_{\gamma_1}-\zeta_{\gamma_2})-1]\hat{\Sp}(v;T)\prod\limits_{j=1}^p\frac{\mathrm{d}\xi_j\mathrm{d}\zeta_j}{(2\pi)^{2d}}\,.
\end{aligned}
   \end{equation*}
For a fixed set $\Lambda_\kappa$ in the sum above, we have, using the shorthand notation $\mathrm{d}\tilde\Xi:=\prod\limits_{j=1}^p\frac{\mathrm{d}\xi_j\mathrm{d}\zeta_j}{(2\pi)^{2d}}$ that
\begin{equation*}
    \begin{aligned}
   &I^\theta_0 :=\int\limits_{\mathbb{R}^{2pd}}e^{i\sum_{j=1}^p(\xi_j-\zeta_j)\cdot r}\prod\limits_{\gamma\in\Lambda_\kappa}[\textcolor{black}{\mathcal{R}_z}(y_{\gamma_2}-x_{\gamma_1},\xi_{\gamma_1}-\zeta_{\gamma_2})-1]\hat{\Sp}(v;T) 
\mathrm{d}\tilde\Xi\\ 
     &=   \theta^{pd}\int\limits_{\mathbb{R}^{2pd}}e^{i\sum_{j=1}^p(\xi_j-\zeta_j)\cdot r}\sum\limits_{\pi_p}F(t_j-t'_{\pi_p(j)})\hat{\Gamma}\Big(\xi_j-\zeta_{\pi_p(j)},\frac{\theta(\xi_j+\zeta_{\pi_p(j)})}{2}\Big)\prod\limits_{\gamma\in\Lambda_\kappa}[\textcolor{black}{\mathcal{R}_z}(y_{\gamma_2}-x_{\gamma_1},\xi_{\gamma_1}-\zeta_{\gamma_2})-1] \mathrm{d}\tilde\Xi. 
    \end{aligned}
\end{equation*}
For each $\pi_p$, from a change of variables $\Big(\xi_j-\zeta_{\pi_p(j)},\frac{\theta(\xi_j+\zeta_{\pi_p(j)})}{2}\Big)\to(\zeta_{\pi_p(j)},\xi_j), j=1,\cdots,p$ we have
\begin{equation*}
      I^\theta_0=  \int\limits_{\mathbb{R}^{2pd}}e^{i\sum_{j=1}^p\zeta_j\cdot r}\sum\limits_{\pi_p}\prod\limits_{j=1}^pF(t_j-t'_{\pi_p(j)})\hat{\Gamma}(\zeta_{\pi_p}(j),\xi_j)\prod\limits_{\gamma\in\Lambda_\kappa}[\textcolor{black}{\mathcal{R}_z}\Big(y_{\gamma_2}-x_{\gamma_1},\frac{\xi_{\gamma_1}-\xi_{\pi_p^{-1}(\gamma_2)}}{\theta}+\frac{\zeta_{\pi_p(\gamma_1)}+\zeta_{\gamma_2}}{2}\Big)-1]
      \mathrm{d}\tilde\Xi. 
   \end{equation*}
   Note that if $\gamma_1\neq\pi_p^{-1}(\gamma_2)$, i.e, at least one pairing in $\pi_p$ is not the same as the pairings in $\Lambda_\kappa$ for the same variables, we have that
   \begin{equation*}
       \lim_{\theta\to 0}\textcolor{black}{\mathcal{R}_z}\Big(y_{\gamma_2}-x_{\gamma_1},\frac{\xi_{\gamma_1}-\xi_{\pi_p^{-1}(\gamma_2)}}{\theta}+\frac{\zeta_{\pi_p(\gamma_1)}+\zeta_{\gamma_2}}{2}\Big)=1\,.
   \end{equation*}   
   This means that in the limit $\theta\to 0$, only those pairings in $\Lambda_\kappa$ that are the same as the pairings in $\pi_p$ contribute and we have as $\theta\to 0$
   \begin{eqnarray}
       \begin{aligned}
          & M_{p,p}(z,r,X,Y;T)=e^{-\frac{pk_0^2R(0)z}{4}}\Big( \Sp(r,\ldots,r;T)\\
   &+\sum\limits_{\kappa=1}^K\prod\limits_{\gamma\in\Lambda_\kappa}\Big(M_{1,1}(z,r,x_{\gamma_1},y_{\gamma_2};t_{\gamma_1},t'_{\gamma_2})-M_{0,0}(z,r;t_{\gamma_1},t'_{\gamma_2})\Big)\sum\limits_{\pi_{\bar{m}(\kappa)}}\prod\limits_{j=1}^{\bar{m}(\kappa)}M_{0,0}(z,r;t_{s_j},t'_{s'_{\pi_{\bar{m}(\kappa)}}(j)})\,,
       \end{aligned}
   \end{eqnarray}
   where $M_{1,1}$ is defined in~\eqref{eqn:M_11_kinetic} for $\beta=1$ and $M_{0,0}$ is given by
   \begin{eqnarray}\label{eqn:M_00_kinetic}
       M_{0,0}(z,r;t_{\gamma_1},t'_{\gamma_2})=F(t_{\gamma_1}-t'_{\gamma_2})\Gamma(r,0)e^{-\frac{k_0^2R(0)z}{4}}\,.
   \end{eqnarray}
Here, $\bar{m}(\kappa)=p-m(\kappa)$, $\{s_j\}_j=\{j\notin\Lambda_{\kappa,1}\}$ and $\{s'_l\}_l=\{l\notin\Lambda_{\kappa,2}\}$. This can be written as 
\begin{eqnarray}
    M_{p,p}(z,r,X,Y;T)=\mathscr{G}\big(M_{0,0}(z,r;t_j,t'_l)_{j,l},M_{1,1}(z,r,x_j,y_l;t_j,t'_l)_{j,l}\big)
\end{eqnarray}
where $\mathscr{G}$ is a functional of $2p^2$ arguments
   \begin{equation}\label{eqn:G_def}
    \begin{aligned}
        \mathscr{G}(h_{1,1},\cdots,h_{p,p},g_{1,1},\cdots,g_{p,p})&=\sum\limits_{\kappa=1}^K\prod\limits_{\gamma\in\Lambda_\kappa}(g_{\gamma_1,\gamma_2}-h_{\gamma_1,\gamma_2})\sum\limits_{\pi_{\bar{m}(\kappa)}}\prod\limits_{j=1}^{\bar{m}(\kappa)}h_{{s_j},{s'_{\pi_{\bar{m}(\kappa)}(j)}}}+\sum\limits_{\pi_p}\prod\limits_{j=1}^ph_{j,\pi_p(j)}\,.
    \end{aligned}
\end{equation}  
This completes the convergence analysis of the moments $m^\eps_{p,q}$ to their limits \eqref{eq:Mppkinetic} in Theorem \ref{thm:large_corr_kinetic}.
\subsection{Limiting moments in the diffusive regime}\label{subsec:diff_mom}
When $\eta\ll 1$, we have that
\begin{equation*}
\begin{aligned}
   &\lim_{\eps\to 0} U_\eta^p\prod\limits_{\gamma\in\Lambda_\kappa}\Big(\textcolor{black}{\mathcal{R}_z}\big(\eta(y_{\gamma_2}-x_{\gamma_1}),\eta\eps^{\beta-1}(\xi_{\gamma_1}-\zeta_{\gamma_2})\big)-1\Big)\\
   &=\lim_{\eps\to 0}e^{-\frac{pk_0^2zR(0)}{4\eta^2}}\prod\limits_{\gamma\in\Lambda_\kappa}\Big(e^{\frac{k_0^2z}{4\eta^2}\int\limits_{0}^1R\big(\eta(y_{\gamma_2}-x_{\gamma_1})+\frac{\eta\eps^{\beta-1}(\xi_{\gamma_1}-\zeta_{\gamma_2})sz}{k_0}\big)\mathrm{d}s}-1\Big)=0\,,   
\end{aligned}
 \end{equation*}
when $m(\kappa)<p$. So in~\eqref{eqn:m_pp_alpha=beta} only the product with $m(\kappa)=p$ contributes to the sum in the limit $\eps\to 0$. Note that for $\beta>1$ we have
\begin{equation*}
    \begin{aligned}
       \lim_{\eps\to 0}\frac{k_0^2z}{4\eta^2}\int\limits_{0}^1[R\big(\eta(y_{\gamma_2}-x_{\gamma_1})+\frac{\eta\eps^{\beta-1}(\xi_{\gamma_1}-\zeta_{\gamma_2})sz}{k_0}\big)-R(0)]\mathrm{d}s=
           \frac{k_0^2z}{8}(y_{\gamma_2}-x_{\gamma_1})^\tps\Xi(y_{\gamma_2}-x_{\gamma_1})\,.
        \end{aligned}
\end{equation*}
As in the kinetic regime, this gives
\begin{eqnarray}
    M_{p,p}(z,r,X,Y;T)=\Sp(r,\ldots,r;T)\sum\limits_{\pi_p}\prod\limits_{j=1}^pM_{1,1}^\infty(z,r,x_j,y_{\pi_p(j)}),\quad\beta>1\,,
\end{eqnarray}
where $M_{1,1}^\infty$ is given by~
\begin{equation}\label{eqn:M_11_inf_diff}
 M_{1,1}^\infty(z,r,x,y)=\exp\Big(\frac{k_0^2z}{8}(y-x)^\tps\Xi(y-x)\Big)\,.
\end{equation}
Similarly, when $\beta=1$, we have
\begin{equation*}
    \begin{aligned}
       &\lim_{\eps\to 0}\frac{k_0^2z}{4\eta^2}\int\limits_{0}^1[R\big(\eta(y_{\gamma_2}-x_{\gamma_1})+\frac{\eta\eps^{\beta-1}(\xi_{\gamma_1}-\zeta_{\gamma_2})sz}{k_0}\big)-R(0)]\mathrm{d}s\\
       &=           \frac{k_0^2z}{8}\int\limits_{0}^1\Big(y_{\gamma_2}-x_{\gamma_1}+\frac{(\xi_{\gamma_1}-\zeta_{\gamma_2})sz}{k_0}\Big)^\tps\Xi\Big(y_{\gamma_2}-x_{\gamma_1}+\frac{(\xi_{\gamma_1}-\zeta_{\gamma_2})sz}{k_0}\Big)\mathrm{d}s\,.
    \end{aligned}
\end{equation*}
Using a similar argument as in the kinetic regime, we have that
\begin{eqnarray*}
    M_{p,p}(z,r,X,Y;T)=\begin{cases}
        \Big(M_{p,p}^\infty(z,\cdot,X,Y)\ast\Sp(\cdot;T)\Big)(r,\ldots,r), \quad {\beta=1}\\
        \sum\limits_{\pi_p}\prod\limits_{j=1}^pM_{1,1}(z,r,x_{j},y_{\pi_p(j)};t_{j},t'_{\pi_p(j)}),\quad {\beta=1,} \ \theta\to 0,
    \end{cases}
\end{eqnarray*}
    where $M_{p,p}^\infty$ is given by
    \begin{eqnarray}\label{eqn:M_pp_inf_diff}
          M_{p,p}^\infty(z,X',Y',X,Y)=\sum\limits_{\pi_p}\prod\limits_{j=1}^p {M}^\infty_{1,1}(z,x'_{j},x_j,y_{\pi_p(j)})\delta_0(x'_{j}-y'_{\pi_p(j)})\,,
    \end{eqnarray}
    with $M_{1,1}^\infty$ given by
    \begin{eqnarray*}
        M_{1,1}^\infty(z,r,x,y) =\int\limits_{\mathbb{R}^d}\exp\Big[\int\limits_{0}^1\frac{k_0^2z}{8}\Big(y-x+\frac{\zeta sz}{k_0}\Big)^\tps\Xi\Big(y-x+\frac{\zeta sz}{k_0}\Big)\mathrm{d}s\Big]e^{i\zeta\cdot r}\frac{\mathrm{d}\zeta}{(2\pi)^d}\,.
    \end{eqnarray*}
    $M_{1,1}$ in the case $\theta\to 0$ is given by~\eqref{eqn:M_11_diffusive} with $\beta=1$. 

This completes the convergence analysis of the moments $m_{p.q}^\eps$ to their limits \eqref{eq:Mppdiffusive} in Theorem \ref{thm:large_corr_diffusive}.   
\subsection{Concluding the proofs of Theorems~\ref{thm:large_corr_kinetic} and~\ref{thm:large_corr_diffusive}}

\begin{proof}[Proof of Theorems~\ref{thm:large_corr_kinetic} and~\ref{thm:large_corr_diffusive}]
   Corollary \ref{coro:Phi_mom} along with the limiting expressions in sections \ref{subsec:kinetic_mom} and \ref{subsec:diff_mom} show that the moments $M^\eps_{p,q}(\cdot)$ of $\Phi^\eps$ converge to their expression $M_{p,q}(\cdot)$ as given in Theorems \ref{thm:large_corr_kinetic} and \ref{thm:large_corr_diffusive}, respectively. Since the $p+p$th moments of such random vectors do not grow faster than $C^p p^{2p}$, their limiting distributions are completely characterized by their moments~\cite{billingsley2017probability}, which concludes the proofs of the theorems.
\end{proof}

\section{Limiting intensity distribution in the diffusive regime}
\label{sec:limiting_intensity}
From the results of section \ref{subsec:diff_mom}, we have that in the diffusive regime the intensity $I^\eps(z,r,x;t)=|\phi^\eps(z,r,x;t)|^{2}$ has limiting moments given by $M_{p,p}(z,r,x,\cdots,x;t,\cdots,t):=I_p(z,r)$ with $M_{p,p}$ as defined \eqref{eq:Mppdiffusive}. In particular, when $\beta>1$, we have that  
\begin{eqnarray}\label{eqn:I_p_alpha=beta>1_diff}
I_p(z,r)=\SpI(r,\ldots,r)p!=\Gamma(r,0)^p(p!)^2\,.
\end{eqnarray}
From the above expression for $M_{p,p},$ we have that when $\beta=1$, 
\begin{equation*}
\begin{aligned}
     I_p(z,r)& =\sum\limits_{\pi_p}\int\limits_{\mathbb{R}^{pd}}\Sp(X',X'_{\pi_p};0)\mathrm{d}x'_1\cdots\mathrm{d}x'_p\int\limits_{\mathbb{R}^{pd}}e^{\frac{z^3}{24}\sum_{j=1}^p\zeta_j^\tps\Xi\zeta_j}e^{i\sum_{j=1}^p\zeta_j\cdot (r-x'_j)}\frac{\mathrm{d}\zeta_1\cdots\mathrm{d}\zeta_p}{(2\pi)^{pd}}\\
&=p!\int\limits_{\mathbb{R}^{pd}}\SpI(X')\mathrm{d}x'_1\cdots\mathrm{d}x'_p\int\limits_{\mathbb{R}^{pd}}e^{\frac{z^3}{24}\sum_{j=1}^p\zeta_j^\tps\Xi\zeta_j}e^{i\sum_{j=1}^p\zeta_j\cdot (r-x'_j)}\frac{\mathrm{d}\zeta_1\cdots\mathrm{d}\zeta_p}{(2\pi)^{pd}}
\end{aligned}
\end{equation*}
where we have used the fact that
\begin{eqnarray}\label{eqn:mu_pp_symm}
    \Sp(X',X'_{\pi_p};0)=\sum\limits_{\pi'_p}\prod\limits_{j=1}^pJ_\theta(x_j',x'_{\pi'_p(\pi_p(j))})=\SpI(X')
\end{eqnarray}
 where $J_\theta$ is given by~\eqref{eqn:J_theta} and $\pi_p'$ is a permutation of $p$ integers independent of $\pi_p$. This translates to a diffusion equation of the form
\begin{equation*}
    I_p(z,r)=p![G_p(z^3,\cdot)\ast \SpI(\cdot)](r\cdots,r)\,,
\end{equation*}
where $G_p$ is the Green's function to the diffusion equation
\begin{equation}\label{eqn:high_dim_diff}
    \partial_tG_p+\frac{1}{24}\sum\limits_{j=1}^p\nabla_{r_j}\cdot\Big(\Xi\nabla_{r_j}G_p\Big)=0,\quad G_p(0,r_1,\cdots,r_p)=\prod\limits_{j=1}^p\delta_0(r_j)\,,
\end{equation}
and $\SpI(X)=\Sp(X,X;0)$ is given in \eqref{eqn:Sp}.
Again, from \eqref{eq:Mppdiffusive} we have that when $\beta=1$ and $\theta\to 0$,
\begin{eqnarray*}
    I_p(z,r)=p!M_{1,1}(z,r,x,x;t,t)^p=p!\mathbb{E}[I](z,r)^p\,,
\end{eqnarray*}
with $\mathbb{E}[I](z,r)$ given by~\eqref{eqn:E_I} with $\beta=1$. 

\begin{proof}[Proof of Corollary \ref{coro:intensity_large_corr}] 
This is a direct consequence of Theorem \ref{thm:large_corr_diffusive} and the above calculations.
\end{proof}

\subsection{Limiting time averaged intensity}
The limiting moments of the time averaged intensity are computed from \eqref{eq:Mppdiffusive} in a similar manner. Recall the time averaged intensity
    $I^\eps_{\sfT}(z,r,x;t)=\frac{1}{\sfT}\int_{0}^\sfT I^\eps(z,r,x;t+t')\mathrm{d}t'.$
    From Theorem~\ref{thm:large_corr_diffusive}, we have that
    \begin{equation*}
        \mathbb{E}[I^\eps_\sfT(z,r,x;t)^p]\to\frac{1}{\sfT^p}\int\limits_{[0,T]^p}M_{p,p}(z,r,x,\ldots,x;t+T,t+T)\prod\limits_{j=1}^p\mathrm{d}t_j\,,
    \end{equation*}
    as $\eps\to 0$ where $T=(t_1,\cdots,t_p)$.     
    When $\beta>1$, using \eqref{eq:Mppdiffusive} we have
    \begin{equation*}
    \begin{aligned}
       M_{p,p}(z,r,x,\ldots,x;t+T,t+T)=\Sp(r,\ldots,r;t+T,t+T)\sum\limits_{\pi_p}\prod\limits_{j=1}^pM_{1,1}^\infty(z,r,x,x)\\
       =\Big(\sum\limits_{\pi_p}\prod\limits_{j=1}^p\Gamma(r,0)M_{1,1}^\infty(z,r,x,x)\Big)\sum\limits_{\pi_p}\prod\limits_{j=1}^pF(t_j-t_{\pi_p(j)})
       =\frac{1}{p!}I_p(z,r)\sum\limits_{\pi_p}\prod\limits_{j=1}^pF(t_j-t_{\pi_p(j)})\,.
    \end{aligned}
        \end{equation*}
 Similarly, when $\beta=1$, we have
        \begin{equation*}
            \begin{aligned}
                 &M_{p,p}(z,r,x,\ldots,x;t+T,t+T)=\Big(M_{p,p}^\infty(z,\cdot,x,\ldots,x)\ast \Sp(\cdot,t+T,t+T)\Big)(r,\ldots,r)\\
                 &=\sum\limits_{\pi_p}\prod_{j=1}^pF(t_j-t_{\pi_p(j)})\int\limits_{\mathbb{R}^{pd}}\sum\limits_{\pi'_p}\prod\limits_{j=1}^pJ_\theta(x'_j,x'_{\pi'_p(\pi_p(j))})\Big[\int\limits_{\mathbb{R}^{pd}}e^{\frac{z^3}{24}\sum_{j=1}^p\zeta_j^\tps\Xi\zeta_j}e^{i\sum_{j=1}^p\zeta_j\cdot(r-x'_j)}\frac{\mathrm{d}\zeta_1\cdots\mathrm{d}\zeta_p}{(2\pi)^{pd}}\Big]\mathrm{d}x'_1\cdots\mathrm{d}x'_p\\
                 &=\frac{1}{p!}I_p(z,r)\sum\limits_{\pi_p}\prod\limits_{j=1}^pF(t_j-t_{\pi_p(j)})
            \end{aligned}
        \end{equation*}
        where we have used~\eqref{eqn:mu_pp_symm} again in the last line. A similar argument holds when $\beta=1$ and $\theta\to 0$ as well.
\begin{proof}[Proof of Theorem \ref{thm:time_avg_intens_limit}.]
    Recalling the definition of $F_p(\sfT)$ in 
    \eqref{eqn:F_p(T)}, we deduce from Corollary \ref{coro:intensity_large_corr} and the above expressions that \begin{equation*}
        \mathbb{E}[I^\eps_\sfT(z,r,x;t)^p]\to\frac{1}{p!}I_p(z,r)F_p(\sfT)\,,
    \end{equation*}
    as $\eps\to 0$. An application of Theorem \ref{thm:large_corr_diffusive} concludes the proof. 
\end{proof}

\subsection{Time averaged scintillation and proof of Corollary~\ref{coro:scint_large_correl}}
 Theorem~\ref{thm:time_avg_intens_limit} gives
\begin{equation*}
    \begin{aligned}
      \mathbb{E}[I^\eps_\sfT(z,r,x;t)^2]&\to\frac{1}{2}\mathbb{E}[I(z,r)^2]F_2(\sfT)=\frac{1}{2}\mathbb{E}[I(z,r)^2](1+F_\sfT)\,.
         \end{aligned}
\end{equation*}
Note that for $\beta>1$, we deduce from Corollary~\ref{coro:intensity_large_corr} that
\begin{equation*}
    \begin{aligned}
    \mathbb{E}[I(z,r)^2]=4\Gamma(r,0)^2=4\mathbb{E}[I(z,r)]^2\,.
    \end{aligned}
\end{equation*}
This gives the limiting scintillation when $\beta>1$ as
\begin{eqnarray*}
    \sfS_\sfT(z,r)=1+2F_\sfT\,.
\end{eqnarray*}
Similarly, for $\beta=1$ we have that the limiting scintillation is given by
\begin{eqnarray*}
    \sfS_\sfT(z,r)=F_\sfT+\frac{\chi(z,r)}{\mathbb{E}[I(z,r)]^2}(1+F_\sfT)\,,
\end{eqnarray*}
where $\chi(z,r)=\chi(z,r;\theta)$ denotes the cross correlation component 
\begin{equation}\label{eqn:chi_eq1}
    \begin{aligned}
       \chi(z,r)
       &=\int\limits_{\mathbb{R}^{4d}}
       e^{i\xi\cdot(r-x)}e^{i\zeta\cdot(r-y)}e^{\frac{z^3}{24}\big(\xi^\tps\Xi\xi+\zeta^\tps\Xi\zeta\big)}J_\theta(x,y)J_\theta(y,x)\frac{\mathrm{d}\xi\mathrm{d}\zeta\mathrm{d}x\mathrm{d}y}{(2\pi)^{2d}}\\
       &=\Big(\frac{12}{z^3}\Big)^d\frac{1}{|\Xi|}\int\limits_{\mathbb{R}^{2d}}e^{\frac{6}{z^3}\big((r-x)^\tps\Xi^{-1}(r-x)+(r-y)^\tps\Xi^{-1}(r-y)\big)}J_\theta(x,y)J_\theta(y,x)\frac{\mathrm{d}x\mathrm{d}y}{(2\pi)^d}\,,
    \end{aligned}
\end{equation}
where $|\Xi|$ denotes the determinant of $\Xi$ and $J_\theta$ is given by~\eqref{eqn:J_theta}. From a change of variables $\big(\frac{x+y}{2},x-y\big)\to (r',\theta\sigma')$, we have that
\begin{equation*}
    \begin{aligned}
         \chi(z,r)&=\Big(\frac{12\theta}{z^3}\Big)^d\frac{1}{|\Xi|}\int\limits_{\mathbb{R}^{2d}}e^{\frac{12}{z^3}(r-r')^\tps\Xi^{-1}(r-r')}e^{\frac{3\theta^2}{z^3}\sigma'^\tps\Xi^{-1}\sigma'}\Gamma(r',\sigma')^2\frac{\mathrm{d}r'\mathrm{d}\sigma'}{(2\pi)^d}\,.
    \end{aligned}
\end{equation*}
In particular, when $\theta\to 0$, from Lebesgue's dominated convergence theorem we have that
\begin{equation*}
    \chi(z,r)\to 0\,.
\end{equation*}
Note that the difference $\chi(z,r)-\mathbb{E}[I](z,r)^2$ follows a diffusion equation
\begin{equation*}
    \chi(z,r)-\mathbb{E}[I](z,r)^2=\big(G_2(z^3,\cdot,\cdot)\ast S\big)(r,r)\,,
\end{equation*}
where $G_2$ is given by~\eqref{eqn:high_dim_diff} and $S(r_1,r_2)=J_\theta(r_1,r_2)J_\theta(r_2,r_1)-J_\theta(r_1,r_1)J_\theta(r_2,r_2)$. By Cauchy-Schwarz,
\begin{equation*}
   S(r_1,r_2)=J_\theta(r_1,r_2)J_\theta(r_2,r_1)-J_\theta(r_1,r_1)J_\theta(r_2,r_2)=|\mathbb{E}[u_0(r_1)u_0^\ast(r_2)]|^2 -\mathbb{E}[|u_0(r_1)|^2]\mathbb{E}[|u_0(r_2)|^2]\le 0\,.
\end{equation*}
This means that $\chi(z,r)-\mathbb{E}[I](z,r)^2$ is given by the solution of a diffusion equation with a source that is strictly non positive, evaluated along the diagonal $r_1=r_2=r$. Since the solution to the heat equation is simply a convolution with a Gaussian, the non positivity due to the initial condition is preserved at all times. Furthermore, $\mathbb{E}[I](z,r)$ follows a diffusion equation given by~\eqref{eqn:E_I_large_correl_beta=1}, \eqref{eqn:G_large_correl_beta=1}. So it stays positive if $\Gamma(r,0)>0$. This gives us
\begin{equation*}
    \frac{\chi(z,r)}{\mathbb{E}[I](z,r)^2}\le 1\,.
\end{equation*}
This proves~\eqref{eqn:E_I^2_large_corr}-\eqref{eqn:Scint_large_corr}. 
\paragraph{Limiting behavior of $\frac{\chi(z,r)}{\mathbb{E}[I](z,r)^2}$ as $z\to\infty$. } This ratio is given by
\begin{equation*}
    \frac{\chi(z,r)}{\mathbb{E}[I](z,r)^2}=\frac{\int\limits_{\mathbb{R}^{2d}}e^{\frac{6}{z^3}\big((r-x)^\tps\Xi^{-1}(r-x)+(r-y)^\tps\Xi^{-1}(r-y)\big)}J_\theta(x,y)J_\theta(y,x)\mathrm{d}x\mathrm{d}y}{\int\limits_{\mathbb{R}^{2d}}e^{\frac{6}{z^3}\big((r-x)^\tps\Xi^{-1}(r-x)+(r-y)^\tps\Xi^{-1}(r-y)\big)}J_\theta(x,x)J_\theta(y,y)\mathrm{d}x\mathrm{d}y}\,.
\end{equation*}
From Lebesgue's dominated convergence theorem,
\begin{equation*}
    \lim_{z\to\infty}\frac{\chi(z,r)}{\mathbb{E}[I](z,r)^2}=\frac{\int\limits_{\mathbb{R}^{2d}}J_\theta(x,y)J_\theta(y,x)\mathrm{d}x\mathrm{d}y}{\Big(\int\limits_{\mathbb{R}^{d}}J_\theta(x,x)\mathrm{d}x\Big)^2}=\frac{\int\limits_{\mathbb{R}^{2d}}\Gamma\big(r,\frac{\sigma}{\theta}\big)^2\mathrm{d}r\mathrm{d}\sigma}{\Big(\int\limits_{\mathbb{R}^{d}}\Gamma(r,0)\mathrm{d}r\Big)^2}\,.
\end{equation*}
This shows~\eqref{eqn:Scint_z_infty} and completes the proof.
\section{Conclusions and possible extensions}
\label{sec:conclusion}
The goal of this paper was to provide a complete statistical description of wavefields due to partially coherent incident beams under the scintillation scaling of the It\^o-Schr\"odinger white noise paraxial regime. We observe that statistical fluctuations in the time averaged intensity are amplified or reduced depending on the coherence width of the source relative to the fluctuations in the medium and the averaging time at the detector. In particular, this leads to scintillation indices ranging from 3 to 0 in the diffusive limit. The results presented in this paper can potentially be useful in providing a theoretical guidance to optimize performance of communication links. The diffusive regime considered here provides a simplified mapping between the turbulence parameters of the medium, source statistics and intensity measurements. The asymptotic statistical behaviour of wavefields in this regime is primarily governed by the macroscopic scaling parameters rather than the intricate profiles of the incident beams themselves. While the kinetic regime does not immediately lead to as simplified models as in the diffusive regime, it may prove useful for engineering applications when complemented with numerical simulations. \textcolor{black}{It is also likely that one can arrive at a similar conclusion starting from the paraxial approximation, without explicitly using the white noise model as was done in~\cite{bal2024long} for coherent beams.} Quantifying the range of validity of these regimes and further numerical experiments in the kinetic regime are left for future investigation. 

\section*{Disclosure statement} The authors report there are no competing interests to declare. 

\section*{Acknowledgements} This work was supported in part by NSF Grant DMS-2306411.
\appendix
\section{Operator \texorpdfstring{$L^\eps_{p,q}$}{Lpq}}\label{App:operator}
The operator $L^\eps_{p,q}$ is defined as
\begin{equation}\label{eqn:L_expansion}
     {L}^{\eps}_{p,q}=\frac{p+q}{2}L_{\eta}+\sum_{j=1}^p\sum_{l=1}^q{L}^{\eps,1}_{j,l}+\sum_{1\le j< j'\le p}L^{\eps ,2}_{j,j'}+\sum_{p+1\le l< l'\le p+q}L^{\eps ,2}_{l,l'}\,,
 \end{equation}
  with   
 \begin{equation}\label{eqn:L_pq_def}
     \begin{aligned}
          L_{\eta}&=-\frac{k_0^2}{4(2\pi)^d\eta^2}\int_{\Rm^d}\hat{R}(k)\mathrm{d}k=-\frac{k_0^2 R(0)}{4\eta^2}\,,\\
         {L}^{\eps ,1}_{j,l}\rho(v)&=\frac{k_0^2}{4(2\pi)^d\eta^2}\int_{\Rm^d}\hat{R}(k)\rho(v-A_{j,l}k)e^{\frac{iz\eta}{k_0\eps}k^\tps A_{j,l}^\tps\Theta v}\mathrm{d}k\,,\\
         L^{\eps ,2}_{j,j'}\rho(v)&=-\frac{k_0^2}{4(2\pi)^d\eta^2}\int_{\Rm^d}\hat{R}(k)\rho(v-B_{j,j'}k)e^{\frac{iz\eta}{2k_0\eps}(2k^\tps B_{j,j'}^\tps\Theta v-k^\tps B_{j,j'}^\tps\Theta B_{j,j'}k)}\mathrm{d}k\,.
     \end{aligned}
 \end{equation}
For $1\le j< j'\le p$ and $p+1\le j< j'\le p+q$, we define $B_{j,j'}$ as a $d(p+q)\times d$ matrix of $d\times d$ blocks with block $j$ as $\mathbb{I}_d$ and block $j'$ as $-\mathbb{I}_d$.
\section{Stochastic continuity and tightness}\label{App:tightness}
\begin{proof}[Proof of Theorem \ref{thm:tightness}]
For fixed $z>0$ and $r\in\mathbb{R}^d$ we show a tightness criterion of the form
\begin{equation*}
     \sup_{s\in[0,z]} \mathbb{E}|\phi^\eps(s,r,x+h;t+\Delta t)-\phi^\eps(s,r,x;t)|^{2n}\le C_{\alpha}(z,n)(|h|^{2\alpha_0 n}+|\Delta t|^{2n})\,.
\end{equation*}
For this, it is sufficient to show that 
\begin{eqnarray*}
    \sup_{s\in[0,z]}\mathbb{E}|\phi^\eps(s,r,x+h;t)-\phi^\eps(s,r,x;t)|^{2n}\le C_\alpha(n,z)|h|^{2\alpha_0 n}\,,
\end{eqnarray*}
uniformly in $t$ and
\begin{eqnarray*}
    \sup_{s\in[0,z]}\mathbb{E}| \phi^\eps(s,r,x;t+\Delta t)-\phi^\eps(s,r,x;t)|^{2n}\le C(z,n)|\Delta t|^{2 n}\,,
\end{eqnarray*}
uniformly in $x$.
\paragraph{Stochastic continuity along $x$:}

For deterministic sources, in order to have 
\begin{eqnarray*}
    \sup_{s\in[0,z]}\mathbb{E}|\phi^\eps(s,r,x+h)-\phi^\eps(s,r,x)|^{2n}\le C_\alpha(z,n)|h|^{2\alpha_0 n}\,,
\end{eqnarray*}
it is sufficient that $\|\langle k\rangle^{2}\hat{u}_0(k)\|\le C$ (upon some additional assumptions on the medium, see Theorem 2.7 in~\cite{bal2024complex}). In particular, $\sup_{s\in[0,z]}\mathbb{E}|\phi^\eps(s,r,x+h)-\phi^\eps(s,r,x)|^{2n}\le C_\alpha' \|\langle k\rangle^{2}\hat{u}_0(k)\|^{2n}|h|^{2\alpha_0n}$. For randomly varying sources, for fixed $(r,t)$ we have
\begin{equation*}
    \mathbb{E}|\phi^\eps(s,r,x+h;t)-\phi^\eps(s,r,x;t)|^{2n}=\mathbb{E}\big[\mathbb{E}[|\phi^\eps(s,r,x+h;t)-\phi^\eps(s,r,x;t)|^{2n}\big|u_0]\big]\le C'_\alpha|h|^{2\alpha_0n}\mathbb{E}[ \|\langle k\rangle^{2}\hat{u}_0(k;t)\|^{2n}]\,.
\end{equation*}
By the Cauchy-Schwarz inequality,
\begin{equation*}
    \begin{aligned}
        \|\langle k\rangle^2\hat{u}_0(k;t)\|^2&=\Big(\int\limits_{\mathbb{R}^d}\langle k\rangle^{2+\alpha/2}|\hat{u}_0(k;t)|\frac{1}{\langle k\rangle^{\alpha/2}}\mathrm{d}k\Big)^2\le \int\limits_{\mathbb{R}^d}\frac{1}{\langle k\rangle^\alpha}\mathrm{d}k\int\limits_{\mathbb{R}^d}\langle k\rangle^{4+\alpha}|\hat{u}_0(k;t)|^2\mathrm{d}k\,.
    \end{aligned}
\end{equation*}
Using the Gaussian statistics of the source, this gives
\begin{equation*}
    \begin{aligned}
        \mathbb{E}[\|\langle k\rangle^2\hat{u}_0(k;t)\|^{2n}]&\le \Big(\int\limits_{\mathbb{R}^d}\frac{1}{\langle k\rangle^{2\alpha}}\mathrm{d}k\Big)^{n}\int\limits_{\mathbb{R}^{nd}}\prod\limits_{j=1}^n\langle k_j\rangle^{4+\alpha}\mathbb{E}[\prod\limits_{j=1}^n|\hat{u}_0(k_j;t)|^2]\mathrm{d}k_1\cdots\mathrm{d}k_n\\
        &= \Big(\int\limits_{\mathbb{R}^d}\frac{1}{\langle k\rangle^{2\alpha}}\mathrm{d}k\Big)^{n}\sum\limits_{\pi_n}\int\limits_{\mathbb{R}^{nd}}\prod\limits_{j=1}^n\langle k_j\rangle^{2+\alpha/2}\langle k_{\pi_n(j)}\rangle^{2+\alpha/2}\hat{J}(k_j,k_{\pi_n(j)})F(0)\mathrm{d}k_1\cdots\mathrm{d}k_n\,.
    \end{aligned}
\end{equation*}
For the first term to be bounded, we will need that $\alpha>\frac{d-1}{2}$. We have that the expectation on the left hand side is bounded uniformly in $t$ as soon as for every $\pi_n$,
\begin{equation*}
    \int\limits_{\mathbb{R}^{nd}} \prod\limits_{j=1}^n\langle k_j\rangle^{2+\alpha/2}\langle k_{\pi_n(j)}\rangle^{2+\alpha/2}\hat{J}(k_j,k_{\pi_n(j)})\mathrm{d}k_1\cdots\mathrm{d}k_n\le C(n,\alpha)\,.
\end{equation*}
\paragraph{Stochastic continuity along $t$:}

Now define 
\begin{eqnarray*}
    \delta_tf=f(t+\delta t)-f(t)\,.
\end{eqnarray*}
Assuming that the medium does not vary with time, $\delta_t u^\eps$ follows the same SDE~\eqref{eqn:Ito_Schr} as $u^\eps$ with source
\begin{eqnarray*}
    \delta_t u^\eps(0,x;t)=u_0^\eps(x;t+\Delta t)-u_0^\eps(x;t)\,.
\end{eqnarray*}
From Proposition~\ref{prop:N_approx_phy}, we have that the $p+p$th moments of $\delta_tu^\eps$ are bounded independently of $\eps$:
\begin{equation*}
    \|\mathbb{E}\prod_{j=1}^p\delta_tu^\eps(z,x;t_j)\prod_{l=1}^p\delta_tu^{\eps *}(z,x;t'_l)\|_\infty \le C(z,p)\|\mathbb{E}\prod_{j=1}^p\widehat{\delta_tu^\eps}(0,\xi_j;t_j)\prod_{l=1}^p\widehat{\delta_tu^{\eps *}}(0,\zeta_l;t'_l)\|
\end{equation*}
Due to this, we note that
\begin{eqnarray*}
    \sup_{s\in[0,z]}\mathbb{E}| \phi^\eps(s,r,x;t+\Delta t)-\phi^\eps(s,r,x;t)|^{2n}\le C(z,n)|\Delta t|^{2 n}\,,
\end{eqnarray*}
holds true if
\begin{eqnarray}\label{eqn:stoch_cont_t}
    \|\mathbb{E}[\prod_{j=1}^p\big(\hat{u}^\eps_0(\xi_j;t_j+\Delta t)-\hat{u}^\eps_0(\xi_j;t_j)\big)\prod_{l=1}^p\big(\hat{u}^{\eps \ast}_0(\zeta_l;t'_l+\Delta t)-\hat{u}^{\eps \ast}_0(\zeta_l;t'_l)\big)]\|\le C(z,n) |\Delta t|^{2p}\,.
\end{eqnarray}
Define the Fourier transform
$
    \check{u}^\eps_0(\xi;\omega) =\int\limits_{\mathbb{R}}\hat{u}^\eps_0(\xi;t)e^{-i\omega t}\mathrm{d}t\,.
$
Then we have
\begin{equation*}
    \begin{aligned}
        &\mathbb{E}[\prod_{j=1}^p\big(\hat{u}^\eps_0(\xi_j;t_j+\Delta t)-\hat{u}^\eps_0(\xi_j;t_j)\big)\prod_{l=1}^p\big(\hat{u}^{\eps \ast}_0(\zeta_l;t'_l+\Delta t)-\hat{u}^{\eps \ast}_0(\zeta_l;t'_l)\big)]\\
        &=\int\limits_{\mathbb{R}^{2p}}\mathbb{E}[\prod\limits_{j=1}^p\check{u}^\eps_0(\xi_j;\omega_j)\prod\limits_{l=1}^p\check{u}^{\eps \ast}_0(\zeta_l;\omega'_l)]e^{i\sum_{j=1}^p(\omega_jt_j-\omega'_jt'_j)}\prod\limits_{j=1}^p(e^{i\omega_j\Delta t}-1)(e^{-i\omega'_j\Delta t}-1)\prod\limits_{j=1}^p\frac{\mathrm{d}\omega_j\mathrm{d}\omega'_j}{(2\pi)^2}
    \end{aligned}
\end{equation*}
Using~\eqref{eqn:u0_cov} we have that
\begin{eqnarray*}
    \mathbb{E}[\check{u}^\eps_0(\xi;\omega)\check{u}_0^{\eps \ast}(\zeta;\omega')]=2\pi\delta_0(\omega-\omega')\hat{F}(\omega)\hat{J}^\eps(\xi,\zeta)\,,
\end{eqnarray*}
which along with the Gaussian assumption on $u_0^\eps$ gives
\begin{equation*}
    \begin{aligned}
        &\mathbb{E}[\prod_{j=1}^p\big(\hat{u}^\eps_0(\xi_j;t_j+\Delta t)-\hat{u}^\eps_0(\xi_j;t_j)\big)\prod_{l=1}^p\big(\hat{u}^{\eps \ast}_0(\zeta_l;t'_l+\Delta t)-\hat{u}^{\eps \ast}_0(\zeta_l;t'_l)\big)]\\
        &=\sum\limits_{\pi_p}\prod\limits_{j=1}^p\hat{J}^\eps(\xi_j,\zeta_{\pi_p(j)})\int\limits_{\mathbb{R}^{p}}e^{i\sum_{j=1}^p\omega_j(t_j-t'_{\pi_p(j)})}\prod\limits_{j=1}^p\hat{F}(\omega_j)|e^{i\omega_j\Delta t}-1|^2\frac{\mathrm{d}\omega_1\cdots\mathrm{d}\omega_p}{(2\pi)^p}\,.
    \end{aligned}
\end{equation*}
This gives
\begin{equation*}
    \begin{aligned}
        &|\mathbb{E}[\prod_{j=1}^p\big(\hat{u}^\eps_0(\xi_j;t_j+\Delta t)-\hat{u}^\eps_0(\xi_j;t_j)\big)\prod_{l=1}^p\big(\hat{u}^{\eps \ast}_0(\zeta_l;t'_l+\Delta t)-\hat{u}^{\eps \ast}_0(\zeta_l;t'_l)\big)]|\\
        &\le(\Delta t)^{2p}\Big(\int\limits_{\mathbb{R}}\omega^2|\hat{F}(\omega)|\frac{\mathrm{d}\omega}{2\pi}\Big)^{p}\sum\limits_{\pi_p}\prod\limits_{j=1}^p|\hat{J}^\eps(\xi_j,\zeta_{\pi_p(j)})|\,.
    \end{aligned}
\end{equation*}
So if we have that
$\int\limits_{\mathbb{R}}\omega^2|\hat{F}(\omega)|\mathrm{d}\omega\le C$
along with the integrability of $\hat{J}^\eps$, we obtain that \eqref{eqn:stoch_cont_t} holds.
\end{proof}

\bibliographystyle{siam}
\bibliography{Reference}

\begin{thebibliography}{10}

\bibitem{andrews2005laser}
{\sc L.~C. Andrews and R.~L. Phillips}, {\em Laser Beam Propagation through
  Random Media, Second Edition}, SPIE press, 2005.

\bibitem{andrews2001laser}
{\sc L.~C. Andrews, R.~L. Phillips, and C.~Y. Hopen}, {\em Laser Beam
  Scintillation with Applications}, vol.~99, SPIE press, 2001.

\bibitem{bailly1996parabolic}
{\sc F.~Bailly, J.-F. Clouet, and J.-P. Fouque}, {\em Parabolic and {G}aussian
  white noise approximation for wave propagation in random media}, SIAM Journal
  on Applied Mathematics, 56 (1996), pp.~1445--1470.

\bibitem{BKR-KRM-10}
{\sc G.~Bal, T.~Komorowski, and L.~Ryzhik}, {\em {Kinetic limits for waves in
  random media}}, Kinetic Related Models, 3(4) (2010), pp.~529 -- 644.

\bibitem{bal2024complex}
{\sc G.~Bal and A.~Nair}, {\em Complex {G}aussianity of long-distance random
  wave processes}, arXiv preprint arXiv:2402.17107,  (2024).

\bibitem{bal2024long}
\leavevmode\vrule height 2pt depth -1.6pt width 23pt, {\em Long distance
  propagation of wave beams in paraxial regime}, arXiv preprint
  arXiv:2409.09514,  (2024).

\bibitem{berman2009reduction}
{\sc G.~Berman, A.~Bishop, B.~Chernobrod, V.~Gorshkov, D.~Lizon, D.~Moody,
  D.~Nguyen, and S.~Torous}, {\em Reduction of laser intensity scintillations
  in turbulent atmospheres using time averaging of a partially coherent beam},
  Journal of Physics B: Atomic, Molecular and Optical Physics, 42 (2009),
  p.~225403.

\bibitem{berman2009influence}
{\sc G.~Berman and A.~Chumak}, {\em Influence of phase-diffuser dynamics on
  scintillations of laser radiation in {E}arth’s atmosphere: long-distance
  propagation}, Physical Review A, 79 (2009), p.~063848.

\bibitem{billingsley2017probability}
{\sc P.~Billingsley}, {\em Probability and Measure}, John Wiley \& Sons, 2017.

\bibitem{borah2010spatially}
{\sc D.~K. Borah and D.~G. Voelz}, {\em Spatially partially coherent beam
  parameter optimization for free space optical communications}, Optics
  Express, 18 (2010), pp.~20746--20758.

\bibitem{carminati2021principles}
{\sc R.~Carminati and J.~C. Schotland}, {\em Principles of Scattering and
  Transport of Light}, Cambridge University Press, 2021.

\bibitem{dawson1984random}
{\sc D.~A. Dawson and G.~C. Papanicolaou}, {\em A random wave process}, Applied
  Mathematics and Optimization, 12 (1984), pp.~97--114.

\bibitem{dogariu2003propagation}
{\sc A.~Dogariu and S.~Amarande}, {\em Propagation of partially coherent beams:
  turbulence-induced degradation}, Optics letters, 28 (2003), pp.~10--12.

\bibitem{durrett2019probability}
{\sc R.~Durrett}, {\em Probability: {T}heory and {E}xamples}, vol.~49,
  Cambridge {U}niversity {P}ress, 2019.

\bibitem{efimov2014simultaneous}
{\sc A.~Efimov, K.~Velizhanin, and G.~Gelikonov}, {\em Simultaneous
  scintillation measurements of coherent and partially coherent beams in an
  open atmosphere experiment}, in Free-Space Laser Communication and
  Atmospheric Propagation XXVI, vol.~8971, SPIE, 2014, pp.~38--45.

\bibitem{furutsu1973spot}
{\sc K.~Furutsu and Y.~Furuhama}, {\em Spot-dancing and related saturation
  phenomenon of irradiance scintillation of optical beams in a random medium},
  Optica Acta: International Journal of Optics, 20 (1973), pp.~707--719.

\bibitem{garnier2009coupled}
{\sc J.~Garnier and K.~S{\o}lna}, {\em Coupled paraxial wave equations in
  random media in the white-noise regime}, The Annals of Applied Probability,
  (2009), pp.~318--346.

\bibitem{garnier2014scintillation}
\leavevmode\vrule height 2pt depth -1.6pt width 23pt, {\em Scintillation in the
  white-noise paraxial regime}, Communications in Partial Differential
  Equations, 39 (2014), pp.~626--650.

\bibitem{garnier2016fourth}
\leavevmode\vrule height 2pt depth -1.6pt width 23pt, {\em Fourth-moment
  analysis for wave propagation in the white-noise paraxial regime}, Archive
  for Rational Mechanics and Analysis, 220 (2016), pp.~37--81.

\bibitem{garnier2018noninvasive}
\leavevmode\vrule height 2pt depth -1.6pt width 23pt, {\em Noninvasive imaging
  through random media}, SIAM Journal on Applied Mathematics, 78 (2018),
  pp.~3296--3315.

\bibitem{garnier2022scintillation}
\leavevmode\vrule height 2pt depth -1.6pt width 23pt, {\em Scintillation of
  partially coherent light in time-varying complex media}, JOSA A, 39 (2022),
  pp.~1309--1322.

\bibitem{garnier2023fourth}
\leavevmode\vrule height 2pt depth -1.6pt width 23pt, {\em Fourth-order moments
  analysis for partially coherent electromagnetic beams in random media}, Waves
  in Random and Complex Media, 33 (2023), pp.~1346--1365.

\bibitem{gbur2014partially}
{\sc G.~Gbur}, {\em Partially coherent beam propagation in atmospheric
  turbulence}, JOSA A, 31 (2014), pp.~2038--2045.

\bibitem{gbur2002spreading}
{\sc G.~Gbur and E.~Wolf}, {\em Spreading of partially coherent beams in random
  media}, JOSA a, 19 (2002), pp.~1592--1598.

\bibitem{goodman1976some}
{\sc J.~W. Goodman}, {\em Some fundamental properties of speckle}, JOSA, 66
  (1976), pp.~1145--1150.

\bibitem{gradshteyn2014table}
{\sc I.~S. Gradshteyn and I.~M. Ryzhik}, {\em Table of Integrals, Series, and
  Products}, Academic press, 2014.

\bibitem{korotkova2004model}
{\sc O.~Korotkova, L.~C. Andrews, and R.~L. Phillips}, {\em Model for a
  partially coherent {G}aussian beam in atmospheric turbulence with application
  in {L}asercom}, Optical Engineering, 43 (2004), pp.~330--341.

\bibitem{kunita1997stochastic}
{\sc H.~Kunita}, {\em Stochastic Flows and Stochastic Differential Equations},
  vol.~24, Cambridge University Press, 1997.

\bibitem{mandel1995optical}
{\sc L.~Mandel and E.~Wolf}, {\em Optical Coherence and Quantum Optics},
  Cambridge University Press, 1995.

\bibitem{miyahara1982stochastic}
{\sc Y.~Miyahara}, {\em Stochastic evolution equations and white noise
  analysis}, Ottawa: Carleton Mathematical Lecture Notes No. 42, 1982.

\bibitem{nair2023scintillation}
{\sc A.~Nair, Q.~Li, and S.~N. Stechmann}, {\em Scintillation minimization
  versus intensity maximization in optimal beams}, Optics Letters, 48 (2023),
  pp.~3865--3868.

\bibitem{nelson2016scintillation}
{\sc C.~Nelson, S.~Avramov-Zamurovic, O.~Korotkova, S.~Guth, and
  R.~Malek-Madani}, {\em Scintillation reduction in pseudo {M}ulti-{G}aussian
  {S}chell model beams in the maritime environment}, Optics Communications, 364
  (2016), pp.~145--149.

\bibitem{reed1962moment}
{\sc I.~Reed}, {\em On a moment theorem for complex {G}aussian processes}, IRE
  Transactions on Information Theory, 8 (1962), pp.~194--195.

\bibitem{sheng1990scattering}
{\sc P.~Sheng}, {\em Scattering and localization of classical waves in random
  media}, vol.~8, World Scientific, 1990.

\bibitem{wu1991coherence}
{\sc J.~Wu and A.~Boardman}, {\em Coherence length of a {G}aussian-{S}chell
  beam and atmospheric turbulence}, Journal of Modern Optics, 38 (1991),
  pp.~1355--1363.

\end{thebibliography}

\end{document}